\documentclass[12pt]{amsart}
\usepackage{tikz-cd,enumerate,ulem}
\usepackage{mathtools}
\usetikzlibrary{arrows}
\usepackage[utf8]{inputenc}
\usepackage{xspace}
\usepackage{comment}
\newcommand{\notalea}[1]{\todo[color=violet!40]{#1}}
\newcommand{\lt}[1]{\notalea{Lea}\textcolor{violet}{[\textbf{Lea:} #1]}}

\usepackage[english]{babel}
\usepackage[T1]{fontenc}
\usepackage{amsmath,amssymb}

\usepackage[a4paper,top=3cm,bottom=2cm,left=3cm,right=3cm,marginparwidth=1.75cm]{geometry}

\usepackage{amsthm}
\usepackage{amsfonts,dirtytalk,marginnote}
\usepackage{graphicx}
\usepackage[colorinlistoftodos]{todonotes}
\usepackage[hidelinks]{hyperref}
\usepackage{thmtools}

\usepackage{crossreftools}

\usepackage{url}
\newtheorem{theorem}{Theorem}[section]
\newtheorem{thm}[theorem]{Theorem}

\newtheorem{proposition}[theorem]{Proposition}
\newtheorem{definition}[theorem]{Definition}
\newtheorem{lemma}[theorem]{Lemma}

\newtheorem{corollary}[theorem]{Corollary}

\newtheorem{assumptions}[theorem]{Assumption}

\theoremstyle{definition}
\newtheorem{ex}[theorem]{Example}

\newtheorem{remark}[theorem]{Remark}
\newtheorem{remarks}[theorem]{Remarks}
\theoremstyle{plain}
\newtheorem{maintheorem}{Theorem}

\def\AA{\mathbb{A}}
\def\CC{\mathbb{C}}

\def\FF{\mathbb{F}}
\def\GG{\mathbb{G}}

\def\KK{\mathbb{K}}
\def\LL{\mathbb{L}}
\def\MM{\mathbb{M}}
\def\NN{\mathbb{N}}
\def\QQ{\mathbb{Q}}

\def\ZZ{\mathbb{Z}}

\def\calo{\mathcal{O}}

\def\Gal{\mathrm{Gal}}
\def\GL{\mathrm{GL}}

\def\SL{\mathrm{SL}}

\def\Qbar{\overline{\QQ}}

\newcommand{\mug}{{\boldsymbol\mu}}

\newcommand{\andr}[1]{{\textcolor{red}{#1}}}
\newcommand{\pietro}[1]{{\textcolor{blue}{[\textbf{Pietro:} #1]}}}

\usepackage{enumitem}
\setlist[itemize,1]{label={--\,}}
\setlist{leftmargin=8mm}

\newtheorem*{fact*}{Fact} 
\newcommand{\G}{{\mathbb{G}}}
\newcommand{\Z}{{\mathbb{Z}}}

\newcommand{\Q}{{\mathbb{Q}}}
\newcommand{\Qp}{\overline\Q_p}

\newcommand{\F}{{\mathbb{F}}}

\newcommand\B{{\rm (B)}\xspace}
\newcommand\SB{{\rm (SB)}\xspace}
\newcommand\ADZ{{\rm (ADZ)}\xspace}

\newcommand\id{{\mathrm{id}}}

\newcommand{\vareps}{\varepsilon}

\newcommand{\into}{\hookrightarrow}

\newcommand{\ovl}{\overline}
\newcommand{\wtl}{\widetilde}

\newcommand{\ccirc}{\kern0.5ex\vcenter{\hbox{$\scriptstyle\circ$}}\kern0.5ex}

\newtheorem*{theorem*}{Theorem}
\newtheorem*{remark*}{Remark}

\numberwithin{equation}{section}
\author{Andrea Conti, Pietro Piras, and Lea Terracini}
\title{The Bogomolov property for $p$-supercuspidal eigenforms}
\keywords{Weil height, Bogomolov property, supercuspidal Hecke eigenforms}
\subjclass[2020]{11G50, 11F80}

\begin{document}

\begin{abstract}
We prove a lower bound on the Weil height, the so-called Bogomolov property, for the algebraic extensions of $\QQ$ cut out by the adelic Galois representations attached to certain eigenforms whose local component at a prime $p$ is supercuspidal. To this end, we give a method for constructing metric inequalities over $p$-adic Lie extensions of fields over $\QQ$ that are finitely ramified at $p$.
\end{abstract}

\maketitle

\setcounter{tocdepth}{1}
\tableofcontents

% \notalealine{sostituire ADZ con BSTRONG nell'enunciato del teorema sul composto e poi fare notare che ADZ implica ADstrong}

\section*{Introduction}%\lt{Buttato giù di getto, tutto da rivedere e integrare a piacere}\andr{Globalmente mi sembra molto buona!}

% A classical problem in arithmetic geometry and diophantine arithmetic is to understand lower bounds for
% heights in infinite algebraic extensions.
% %\andr{The guidelines for the Bulletin of the LMS insist that the paper should be put into a perspective that makes it interesting to non-specialists, so maybe we can expand a bit the beginning of the introduction; something like this? \\
% In particular, the \emph{Weil height} is a measure of the complexity of an algebraic number. Bombieri and Zannier \cite{BombieriZannier2001}, drawing from geometric questions about subvarieties of abelian varieties containing many points of small canonical height, gave the following definition: 
% a field \(\LL\) of algebraic numbers is said
% to have the \textit{Bogomolov property} (property \B for short) if there exists a positive constant
% \(c(\LL)>0\) such that every non-zero element \(\alpha\in \LL^\times\), which is
% not a root of unity, satisfies
% \[
% h(\alpha)\ge c(\LL).
% \]
% \notapietro{Ho aggiunto delle cose per interessare di più il lettore, forse ho esagerato un po'... ditemi cosa ne pensate}
A classical problem in arithmetic geometry and diophantine arithmetic is to understand lower bounds for
heights. On the arithmetic side, the \emph{Weil height} is a measure of the complexity of an algebraic number. Denoting by $h(\alpha)$ the Weil height of an algebraic number $\alpha$, an important conjecture of Lehmer predicts the existence of a constant $C$ such that
$$h(\alpha)\geq\frac{C}{\deg(\alpha)}$$
for all algebraic numbers $\alpha$ that are not roots of unity (see Smyth's excellent review \cite{S08} for a survey on Lehmer's conjecture). Lehmer's conjecture, and its generalizations due to Rémond \cite[Conjecture 3.4]{Rem17}, have been the object of various partial results, as seen for instance \cite{Rem17,Amo16,Gri17,Pot21,Ple24,Ple22,Ple24a}. A natural analogue on the geometric side is to consider an elliptic curve $E$ defined over a number field, equipped with the Néron-Tate height $\hat{h}$. Lang conjectured a lower bound on $\hat{h}$ depending on the discriminant of $E$; Szpiro conjectured another bound on the norm of the discriminant of $E$, implying Lang's bound and among other things the ABC conjecture; see \cite{S10} for a survey. 
%\andr{The guidelines for the Bulletin of the LMS insist that the paper should be put into a perspective that makes it interesting to non-specialists, so maybe we can expand a bit the beginning of the introduction; something like this? \\
In higher dimension, the Bogomolov conjecture proposes a description of subvarieties of abelian varieties containing many points of small canonical height. This inspired Bombieri and Zannier \cite{BombieriZannier2001} to propose an arithmetic counterpart, for the Weil height over an algebraic extension $\LL$ of $\QQ$ (or in other words, with the $\LL$-points of the abelian variety replaced with $\GG_m(\LL)$). Namely, a field $\LL$ of algebraic numbers is said to have the \textit{Bogomolov property} (property \B for short) if there exists a positive constant
\(c(\LL)>0\) such that every non-zero element \(\alpha\in \LL^\times\), which is
not a root of unity, satisfies
\[
h(\alpha)\ge c(\LL).
\]
It is a hard problem to determine whether an arbitrary $\LL$ has property \B or not. 
By Northcott's theorem, every number field has (B), so the interesting examples are all of infinite degree. Note that fields with (B) are fields in which Lehmer's conjecture is trivially true, whereas Lehmer's conjecture makes a prediction on the whole $\overline{\Q}$, that does not have (B).

A series of works by many people produced various classes of infinite algebraic extensions $\LL/\QQ$ with the Bogomolov property (see the introductions of \cite{Piras2026} and \cite{Pl24}). It is particularly interesting to investigate this property in the case when $\LL$ originates from a geometric object, for instance, when it is generated by the coordinates of the torsion points of an abelian variety over a number field. In such a setting, the field $\LL$ can be written as the fixed field of the kernel of a representation of an absolute Galois group.
%This property has been established for several infinite extensions
%arising naturally from Galois representations. 
While property \B for arbitrary abelian varieties is the object of an open conjecture of David (see \cite[p.114]{Frey2021}), it is known for the field generated by the torsion points of an
elliptic curve $E$ over $\QQ$ thanks to the work of Habegger \cite{Habegger2013}.%\lt{mi sembra vada benissimo}
% namely for fields of the form
% \[
% \mathbb Q(E_{\mathrm{tors}}),
% \]
On the other hand, Amoroso and Terracini \cite{AmorosoTerracini2024} and Piras \cite{Piras2026} showed an analogous result for the field
\[
\mathbb Q(\rho_f)\coloneqq \ovl\Q^{\ker\rho_f},
\]
fixed by the kernel of the adelic Galois representation attached to a modular form $f$,
under the additional assumption that $f$ admits a \emph{strong supersingular prime}, that is, a prime $p$ not dividing the level and such that the $p$-th Hecke eigenvalue $a_p(f)$ is 0.

Here and below, we only consider cuspidal non-CM eigenforms $f$: when $\rho_f$ is either abelian or induced, property \B for $\mathbb Q(\rho_f)$ is already known by \cite{AmorosoDvornicich2000,AmorosoDavidZannier2014}.

The hypothesis that $a_p(f)=0$ for some prime not dividing the level is rather restrictive. Indeed, for weight strictly larger than \(2\)
it occurs only rarely: for a fixed $p$, Calegari and Sardari \cite{CalegariSardari} show that there are only finitely many forms $f$ of a given level for which $a_p(f)=0$, while for a fixed $f$, the Atkin--Serre conjecture \cite[(4.11$_k$?)]{SerreDiv1976} predicts that only finitely many primes $p$ satisfy $a_p(f)=0$. 
%\lt{grazie Andrea. La referenza a AS è proprio quella che avevo in mente. In realtà i contesti di AS e CS sono un po' diversi: AS fissa la forma e predice che $0$ salvo che per un numero finito di $p$. Mentre CS fissano livello e p, e provano che $a_p=0$ salvo che per un numero finito di forme... non so se è il caso di entrare nei dettagli qui...} \andr{Ah sì, giusto, volevo dire questo e poi mi sono confuso. Ho modificato} %\lt{citare qui Calegari-Sardari e congettura di Atkin-Serre?}. 
Nevertheless, in \cite{AmorosoTerracini2024, Piras2026}  this condition plays two crucial roles. The first one is local at
\(p\): it allows one to relate the local \(p\)-adic Galois representation to
Lubin--Tate theory, hence to obtain a precise control of the ramification
groups. This ingredient is essential in the construction of the
metric inequalities which eventually force a uniform positive lower bound
for the height. The second one concerns the behaviour away from \(p\): the same
condition is used to prove that the image of the representation modulo its
center is abelian. This allows one to apply a well-known lemma of Amoroso, David, and Zannier \cite{AmorosoDavidZannier2014} and get a lower bound for the heights in the prime-to-$p$ part of the representation.

The local aspect at \(p\) has been further developed in
\cite{ContiTerracini2025}. There, the original argument of Habegger based on Lubin--Tate theory is extended to the
setting of a \(p\)-adic Lie extension $L$ of a $p$-adic field $K$,
under the assumption that $L/K$ is totally ramified at a place above
\(p\). This allowed the authors to prove property \B\ for a wide class of $p$-adic Galois representations having a big local image at $p$. 

However, it is much less clear how to extend the argument away from
\(p\) to modular Galois representations beyond the strongly supersingular ones, and more generally what conditions on the global Galois representation could be assumed to relax the property of being abelian modulo center.

The starting point of the present paper is the observation that there are indeed
certain other modular forms for which the representation away from \(p\) is abelian
modulo its center. These are the modular forms for which the associated admissible representation of $\GL_2(\Q_p)$, provided by the local Langlands correspondence, is \emph{supercuspidal}. This provides a natural replacement for the
condition of strongly supersingularity at $p$;
although the local representation at \(p\) is no longer
described by  Lubin--Tate theory, the projective image away from
\(p\) retains precisely the abelianity needed in the global part of the argument.

Our main result is therefore an analogue of the main theorem of
\cite{AmorosoTerracini2024} in this setting, and it identifies a new family of infinite extensions,
arising from modular forms, for which the Bogomolov property holds. 
% from the strong supersingular case to the case of modular forms
% which are supercuspidal at \(p\). %\andr{Maybe we already mentioned this?} 
More precisely, we prove the following theorem. Given an eigenform $f$, we write $\rho_{f,p}:\Gal(\ovl\QQ/\QQ)\to\GL_2(\QQ_p)$ for the associated $p$-adic Galois representation and $\pi_{f,p}$ for the associated admissible representation of $\GL_2(\QQ_p)$. We also write $G_p\subset\Gal(\ovl\QQ/\QQ)$ for a decomposition group at $p$.

\begin{maintheorem}[{cf. Theorem \ref{thm:Bsupercusp}}]
Let $k\ge 2,N\ge 1$ be two integers, and  let $f$ be a non-CM, cuspidal eigenform of weight $k$ and level $\Gamma_0(N)$ such that:
% $\KK_f=\QQ$ and there exists an odd prime $p$ such that $\pi_{f,p}$ is supercuspidal. 
% If $\rho_{f,p}\vert_{G_{p}}$ is induced, assume that the image of $\ovl\rho_{f,p}$ contains $\SL_2(\FF_p)$.
\begin{enumerate}
    \item all of the Hecke eigenvalues of $f$ belong to $\QQ$;
    \item there exists an odd prime $p$ such that the admissible representation $\pi_{f,p}$ is supercuspidal, and if $\rho_{f,p}\vert_{G_{p}}$ is induced, then  $p\geq 5$ and the image of the mod $p$ reduction $\ovl\rho_{f,p}$ contains $\SL_2(\FF_p)$.
\end{enumerate}
Then the corresponding adelic representation
\[\rho_f:G_\QQ \longrightarrow \GL_2(\widehat\ZZ)\]
has property \B.
\end{maintheorem}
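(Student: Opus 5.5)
The proof will follow the two-part strategy of \cite{AmorosoTerracini2024}: a local metric inequality at $p$, combined with the Amoroso--David--Zannier lemma for the prime-to-$p$ part. Set $\LL=\QQ(\rho_f)$, and factor $\rho_f=\rho_{f,p}\times\rho_f^{(p)}$, where $\rho_f^{(p)}$ is the product of the $\ell$-adic representations for $\ell\neq p$. Let $\LL_p=\QQ(\rho_{f,p})$ and $\LL^{(p)}=\QQ(\rho_f^{(p)})$, so that $\LL=\LL_p\LL^{(p)}$.

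The first step is the global, prime-to-$p$ input. Since $\pi_{f,p}$ is supercuspidal, local--global compatibility says that for every $\ell\neq p$ the restriction $\rho_{f,\ell}|_{G_p}$ is, up to twist, an irreducible Weil--Deligne representation with finite (projective) image on inertia. In particular, the image of the inertia group $I_p$ in $\prod_{\ell\ne p}\mathrm{PGL}_2(\QQ_\ell)$ is a finite group $H$ independent of $\ell$, because it comes from a single $2$-dimensional representation of the Weil group. I would then pass to the field $F=\LL^{(p)}\cap(\text{fixed field of the preimage of the center})$. More precisely, the goal is a finite extension $K/\QQ$ such that over $K$ the extension $\LL^{(p)}K/K$ has Galois group abelian modulo center and is unramified-up-to-bounded-index at $p$. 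Together with $a_p$-type information from $\QQ$-rationality of the eigenvalues, this shows that $p$ has finite ramification index and finite residue degree in $\LL^{(p)}$ modulo the center. The center corresponds to a compositum of cyclotomic-type and abelian pieces. This is the setting of the Amoroso--David--Zannier lemma \cite{AmorosoDavidZannier2014}: a Galois extension with group abelian modulo center of exponent bounded in a suitable sense, in which some prime has bounded local degree, has \B, and I would invoke it in that form.

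The second step is the local analysis at $p$. There are two cases. If $\rho_{f,p}|_{G_p}$ is not induced (the exceptional supercuspidal case, which is possible only for $p=2$, excluded by oddness) the case analysis collapses. Otherwise it is induced from a quadratic extension $E/\QQ_p$, either unramified or ramified, by a character $\chi$ of $G_E$. Then $\LL_p\QQ_p$ is a $p$-adic Lie extension of $E$, essentially abelian by $\chi$ over $E$ up to the quadratic step, and is finitely ramified, i.e.\ with bounded ramification beyond a finite level. I would use the method announced in the abstract, extending \cite{ContiTerracini2025} to extensions that are finitely ramified rather than totally ramified at $p$. Concretely, I would produce an inequality $|\alpha^{q}-\sigma(\alpha)|_v\le p^{-c}$ for suitable $\sigma$ in a large normal subgroup acting through a higher ramification group and $q$ a power of $p$, valid for every $\alpha$ in the ring of integers of $\LL$. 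The hypotheses $p\ge5$ and $\SL_2(\FF_p)\subseteq\mathrm{Im}\,\ovl\rho_{f,p}$ serve to guarantee, via a Serre-type open image result, that the global image contains $\SL_2$ of an open subgroup. That in turn lets me choose $\sigma$ in the commutator or central part so that $\sigma$ acts trivially on $\LL^{(p)}$, decoupling the two parts.

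The third step combines the local inequality with the prime-to-$p$ lower bound. This is done by the standard product-formula argument: for $\alpha\in\LL^\times$ not a root of unity, either $\alpha^{q}\neq\sigma(\alpha)$, in which case summing the local inequalities over places above $p$ gives $h(\alpha)\ge c_1>0$; or $\alpha^{q}=\sigma(\alpha)$ for all such $\sigma$. In the latter case some power of $\alpha$ lies in a field that is a finite extension of $\LL^{(p)}$, where \B holds by step one. The main obstacle I expect is step two: controlling the upper ramification filtration of the $p$-adic Lie extension $\LL_p/\QQ_p$ in the induced case when it is not totally ramified. This requires quantifying how far the residue extension grows and making the constant in the metric inequality uniform. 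I would first try reducing to \cite{ContiTerracini2025} by passing to the maximal unramified subextension, which is finite here because the image of Frobenius is controlled by the central character.
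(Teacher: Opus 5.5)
\textbf{There is a genuine gap.} Your step 2 rests on a false dichotomy, and it puts the finiteness of ramification in the wrong place.

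For $p$ odd every supercuspidal $\pi_{f,p}$ is dihedral, so $WD(\pi_{f,p})=\mathrm{Ind}_{W_E}^{W_{\QQ_p}}\theta$. This does not make the $p$-adic representation $\rho_{f,p}|_{G_p}$ induced, because the Hodge filtration need not respect the splitting over $E$. Generically one is in case (open) of Section \ref{sec:padic}, where $\rho_{f,p}(I_p)$ is open in $\GL_2(\ZZ_p)$; the form 75.6.a.d of Section \ref{sec:examples} is of this kind. There your description of the local extension as ``essentially abelian over $E$'' fails, and your plan does not treat this case at all. The paper handles it by big local image from \cite{ContiTerracini2025}, which gives PTR, NC and CE.

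Moreover, $\QQ(\rho_{f,p})$ is infinitely and wildly ramified at $p$; this is what supplies deep ramification groups. After the LPTR shift it is totally ramified over $L_0$. In the induced case LPTR is not the real difficulty: your central-character remark can in fact be made into a proof, since $\det\rho_{f,p}=\vareps_p^{k-1}$ makes $\psi(r_E(p))$ a root of unity. The field that is only finitely ramified is the completion $F$ of $\FF=\QQ(\rho_f^{(p)})$. It has $e(F/\QQ_p)<\infty$ because $\rho_f^{(p)}(I_p)$ is finite, but it is in general of infinite degree over $\QQ_p$.

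The missing idea is how to obtain $|\sigma(\gamma)-\gamma|_v\le p^{-C'}$ uniformly for $\gamma$ integral in the compositum $\FF\LL_n$. That means controlling the ramification of $FL_n/F$ over a base that is not a local field, where jumps cannot be bounded via the residue field. This is exactly Section \ref{sec:sen}. Sen's inclusions $G(ne+c)\subseteq G_n\subseteq G(ne-c)$ replace the residue-field bound (Lemmas \ref{lem:effe1}, \ref{lem:effe}, Proposition \ref{prop:indexbound}). In addition, $\Gal(FL_{n+1}/FL_n)\cong\Gal(L_{n+1}/L_n)$ after a shift, since $L\cap F$ is finite. Elements acting trivially on $\FF$ then come for free. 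The hypothesis $\SL_2(\FF_p)\subseteq\mathrm{Im}\,\ovl\rho_{f,p}$ is used not for ``decoupling'' but for the normal closure property (NC).

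Your step 3 does not work as stated either.
\begin{itemize}
\item No fixed $\sigma$, and no fixed subgroup, gives a uniform bound along the tower: the normalized break $t/e(L_n/\QQ_p)$ of a fixed element tends to $0$. The paper takes $\sigma$ in the top layer $\Gal(\LL_n/\LL_{n-1})$ at the minimal level $n$ of $\alpha$, whose break $t_n$ is comparable to $e_n$.
\item That this layer is normal of \emph{bounded} order $p^A$ is what bounds the centralizer of $\sigma$ from below. This yields the inequality at a positive proportion of places above $p$; a large normal subgroup would destroy this.
\item (NC) is needed so that some conjugate of $\alpha$ is moved by a \emph{local} element of that layer.
\item Your inequality $|\alpha^q-\sigma(\alpha)|_v\le p^{-c}$ conflates two tricks. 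Deep ramification compares $\sigma(\alpha^{p^\lambda})$ with $\alpha^{p^\lambda}$. A power $\alpha^g$ enters only through a central element $\tau$ acting on $\mug_{p^\infty}$ by $\zeta\mapsto\zeta^g$ (property (CE)), which you never produce.
\item In your ``latter case'', knowing $\sigma(\alpha)/\alpha\in\mug_{p^\infty}$ for the top-layer $\sigma$'s only lets a $p$-power of $\alpha$ descend one level, and iterating loses control of the height. The paper instead passes to $\beta=\tau(\alpha)/\alpha^g$, with $h(\beta)\le(g+1)h(\alpha)$, which reduces to $\widetilde{\mathcal{Z}}$ or to $\FF\LL_0$.
\end{itemize}
Since $\FF\LL_0$ is a finite extension of $\FF$, what you need from the prime-to-$p$ part is (SB) (Proposition \ref{prop:ADZB}), not merely (B) from the Amoroso--David--Zannier theorem.
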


%\andr{We could also give the statement earlier and then explain the main ingredients of the proof. I don't know which way you prefer}
In order to combine the local and global ingredients, we are led to generalize
the analysis of \(p\)-adic Lie extensions from \cite{ContiTerracini2025} to a setting in which the base
field is no longer a finite extension of \(\mathbb Q_p\), but rather only
finitely ramified. This is necessary since the level of a modular form which is supercuspidal at $p$ is divisible by $p$, so that the field cut out by the prime-to-$p$ part of the representation is in general (finitely) ramified at $p$. We show that, in this
context, Sen's theorem still provides the required asymptotic control of the
ramification filtration. More precisely, at each level of the Lie
filtration, the maximal index of the ramification groups dominates the
ramification index. 
This allows us to construct a system of
metric inequalities in the spirit of Habegger's method. Their combined effect is to force a uniform
positive lower bound for the height of non-torsion elements in the Lie extension generated by the $p$-adic representation. We refer to Theorems \ref{thm:Btower} and \ref{teo:composto} for our precise statements. We believe that these results are of independent interest and may find applications in establishing height lower bounds in a variety of settings beyond that of Theorem A. For instance, they are used in \cite{ContiDelCorsoPlessisTerracini2026} to describe points of small height in fields obtained by adjoining radicals to certain algebraic extensions of $\mathbb{Q}$.

A main limitation in our work is the assumption that  the Hecke field of the supercuspidal form is $\QQ$; removing this assumption would need a generalization of our argument in many points, and will be the object of future work. 

%We hope to apply these results to the study of the Bogomolov property for more general $p$-adic Lie extensions in the future.
%\andr{here do you mean the $p$-adic or the adelic one?}.

%{\color{violet} \notalea{Spostato nell'introduzione} \andr{OK}
We also point out that the assumptions in Theorem A are more restrictive when the local Galois representation associated to $f$ is induced.
However, such representations may be rare, in view of the following consideration.
Consider an irreducible modular Galois representation
\(
\rho_f:G_\QQ \to \GL_2(\Qbar_p)
\)
of weight \(k\geq 2\).  A well-known open question, attributed to Ralph Greenberg \cite[(1.2)]{GhateVatsal2004}, asks whether
$$\rho_f|_{G_p} \hbox{ splits } \Longrightarrow f \hbox{ has CM.} $$ This question was revisited by Calegari and Sardari in \cite[\S 1.2]{CalegariSardari}, who wonder if the condition of \emph{splitting} can be replaced by that of \emph{being induced}, up to a sparse set of exceptions (including  representations arising from supersingular modular forms of weight 2). Although their main result only concerns the crystalline case, with $a_p=0$, their question is significant in a more general situation. If a similar result can be proved in the potentially crystalline case, then Theorem \ref{thm:Bsupercusp} would imply that \emph{almost all} modular forms that are supercuspidal at $p$ and have rational Hecke field  give rise to Galois representations satisfying property \B.

\subsection*{Overview of the structure.} The paper is organized as follows. Section \ref{sec:prelim} introduces the main definitions and recalls some relevant facts about property \B. In Section \ref{sec:sen}, we study totally ramified \(p\)-adic Lie extensions of finitely ramified extensions of \(\mathbb Q_p\), proving a regularity property for their ramification groups. Sections \ref{sect:towers} and \ref{sect:compositum} establish criteria for property \(\mathrm{(B)}\) for Galois algebraic extensions whose localizations at \(p\) belong to the class described above.

In Section \ref{sec:outsidep} we analyze the local Galois representations attached to
modular forms which are supercuspidal at \(p\), with particular emphasis on
the finiteness of the projective image away from \(p\). In Section \ref{sec:padic}, we investigate the $p$-adic Galois representation attached to a $p$-supercuspidal eigenform. Finally, in Section \ref{sec:B}, we combine the local $p$-adic properties with the prime-to-$p$ study 
 and prove the Bogomolov
property for the field cut out by the adelic representation. Section \ref{sec:examples} contains some examples of eigenforms that satisfy all of the requirements of our main theorem.

\subsection*{Notation.} In all that follows, $p$ is a prime. Note that $p$ is assumed to be odd in Sections \ref{sec:outsidep} and \ref{sec:padic}. \\
% \andr{not necessarily odd?}\lt{Per i teoremi generali sulla proprietà (B), Pietro dovrebbe aver sistemato il caso $p=2$. Nell'applicazione alla forma supercuspidale invece manterrei l'ipotesi p dispari, perché per p=2 la corrispondenza di Langlands è più complicata}. \andr{Ok!} \\
Algebraic extensions of $\QQ_p$ will be denoted by $F,K,E,L,\ldots$ while algebraic extension of $\QQ$ will be denoted by $\mathbb{F},\mathbb{K},\mathbb{E},\mathbb{L},\ldots$ \\
If $\alpha$ is an algebraic number, we denote by $h(\alpha)$ the absolute logarithmic Weil height of $\alpha$. We recall its definition in Section \ref{sec:prelim}. \\
For an arbitrary field $\kappa$, we denote by $\ovl \kappa$ an algebraic closure of $\kappa$, and by $G_\kappa=\Gal(\ovl \kappa/\kappa)$ the absolute Galois group of $\kappa$. \\
We denote by $G_p$ an absolute Galois group of $\Q_p$, and by $I_{p}\subset G_{p}$ its inertia subgroup. We write $I_p^w$ for the wild inertia subgroup of $I_p$, and $I_p^t$ for the tame quotient $I_p/I_p^w$. We fix once and for all an embedding $\ovl\Q\into\Qp$, identifying $G_{p}$ with a decomposition subgroup of $G_\Q$. We use the same embedding to identify $G_{\KK_v}$ with a decomposition group of $G_\KK$ for any number field $\KK$ and $p$-adic place $v$. \\
We denote by $\mug$ the set of roots of unity in $\ovl\Q$ and by $\mug_{p^\infty}$ the subgroup of roots of unity of order a power of $p$. For every field $\kappa$, we put 
$\mug_{p^\infty}(\kappa)=\kappa^\times\cap\mug_{p^\infty}$. We write $\vareps_p\colon G_\Q\to\Z_p^\times$ for the $p$-adic cyclotomic character, and use the convention that $\vareps_p$ has Hodge--Tate weight 1. \\
We denote by $Z(G)$ the center of a group $G$. \\
Given a group $G$ and a decreasing filtration $\mathcal F=(G_i)_i$ of $G$, indexed by a totally ordered set $(I,<)$, a \textit{jump} in $\mathcal F$ is an element $i\in I$ such that $G_i\supsetneq G_j$ for every $j>i$.
% We write $\Mat_n(A)$ for the ring of $n\times n$ matrices with coefficients in a commutative ring $A$. For $n,d\in\Z_{\ge 1}$, we write $\Gamma_d(p^n)=\ker(\SL_d(\ZZ_p)\to\SL_d(\ZZ/p^n\ZZ))$ and $\wtl\Gamma_d(p^n)=\ker(\GL_d(\ZZ_p)\to\GL_d(\ZZ/p^n\ZZ))$, for the principal congruence subgroups of level $p^n$ of $\SL_d(\ZZ_p)$ and $\GL_d(\ZZ_p)$, respectively. Since principal congruence subgroups are sufficient to our purposes, we will always omit the ``principal’’. \\
% The center of an abstract group $\Gamma$ is denoted by $Z(\Gamma)$, and the center of an algebraic group $G$ by $Z_G$.

\subsection*{Acknowledgments.} A.C. is funded by the Deutsche Forschungsgemeinschaft - Project-ID 444845124 – TRR 326 GAUS.\\
P.P. and L.T. are members of the Italian INdAM group GNSAGA.

\section{Preliminaries}\label{sec:prelim}

We recall the definition of the (absolute, logarithmic) Weil height $h(\alpha)$ of an algebraic number $\alpha$. Given a number field $\KK$ containing $\alpha$,  
\begin{equation*} h(\alpha)=\frac 1{[\KK:\QQ]} \sum_{v} [\KK_v:\QQ_p] \log \max\{1,|\alpha|_v\},\end{equation*}
where $v$ varies over the set of places of $\KK$ and $v\mid p$. It is easily checked that such a definition depends only on $\alpha$, not on $\KK$. 

Following \cite{BombieriZannier2001}, we say that:

\begin{definition}
A set $\mathcal{Y}$ of algebraic numbers has the \textup{Bogomolov property} (property \B for short) if there is a positive constant $C$ such that $h(\alpha)\geq C$ for every $\alpha\in\mathcal{Y}\setminus(\mug\cup\{0\})$. \\
% \end{definition}
%
% We are interested in the case when $\mathcal Y$ is a subset of $\LL^\times$ for a Galois extension $\LL$ of a number field $\KK$. 
%
% \begin{definition}
Given a subfield $\LL\subset\ovl\QQ$, we say that $\LL$ has  the \textup{strong Bogomolov property} (property \SB for short) if every finite extension of $\LL$ has the Bogomolov property.
\end{definition}

In the direction of classifying fields with property \B, we recall a result from \cite{ContiTerracini2025}. Let $K,L$ be the closures in $\ovl\Q_p$ of $\KK,\LL$, respectively, via our fixed embedding $\ovl\Q\into\ovl\Q_p$, and let $v$ be the corresponding place of $\KK$.

\begin{assumptions}\label{ass:CT}
We say that $\LL/\KK$:
\begin{description}
\item[\rm{(CE)}] has the \emph{central element property} if there exists $\tau \in Z(\GG)$,  such that, for every $\zeta\in\mug_{p^\infty}(\LL)$, we have $\tau(\zeta)=\zeta^g$ for some $g\in\ZZ$, $g>1$;
\item[\rm{(PTR)}] is \emph{potentially totally ramified} at $v$ if an inertia subgroup $I_v$ at $v$ is open in $\Gal(\LL/\KK)$;
\item[\rm{(LPTR)}] is \emph{locally potentially totally ramified} at $v$ if a decomposition group $D_v$ at $v$ contains the inertia subgroup $I_v$ as an open subgroup;
\item[\rm{(NC)}] has the \emph{normal closure property} if there exists $n_0\in\NN$ such that, for every $n\geq n_0$, the normal closure of $G_{n-1}/G_{n}$ in $\GG/\GG_{n}$ is $\GG_{n-1}/\GG_{n}$.
\end{description}
% \andr{I was not happy with the layout in the description environment; maybe it can be changed or there is another way to hyperlink to the assumptions (if necessary)}
% \begin{description}
% \item[{\crtcrossreflabel{\textup{(CE)}}[def:CE]}]   has the \emph{central element property} if there exists $\tau \in Z_{\GG}$,  such that, for every $\zeta\in\mug_{p^\infty}(\LL)$, $\tau(\zeta)=\zeta^g$ for some $g\in\ZZ, g>1$;
% \item[{\crtcrossreflabel{\textup{(PTR)}}[def:ptr]}] is \emph{potentially totally ramified} at $v$ if an inertia subgroup $I_v$ at $v$ is open in $\Gal(\LL/\KK)$;
% \item[{\crtcrossreflabel{\textup{(LPTR)}}[def:lptr]}] is \emph{locally potentially totally ramified} at $v$ if a decomposition group $D_v$ at $v$ contains the inertia subgroup $I_v$ as an open subgroup;
% \item[{\crtcrossreflabel{\textup{(NC)}}[eq:NC]}] has the \emph{normal closure property} if there exists $n_0\in\NN$ such that, for every $n\geq n_0$, the normal closure of $G_{n-1}/G_{n}$ in $\GG/\GG_{n}$ is $\GG_{n-1}/\GG_{n}$.
% \end{description}
\end{assumptions}

We have the following.

\begin{theorem}[{\cite[Theorem 2.9, Corollary 2.11]{ContiTerracini2025}}]\label{teo:BforLie} 
Let $\KK$ be a number field, $\GG$ a compact $p$-adic Lie group, $\LL/\KK$ a Galois extension with Galois group $\GG$ and $v$ a $p$-adic place of $\LL$. Assume that $\LL/\KK$ satisfies Assumptions \ref{ass:CT} $\mathrm{(CE, LPTR, NC)}$.
Then $\LL$ has \B. \\
In particular, if $\LL/\KK$ satisfies Assumptions \ref{ass:CT} $\mathrm{(CE,PTR)}$, then $\LL$ has \B.
\end{theorem}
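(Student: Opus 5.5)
This theorem is quoted from \cite{ContiTerracini2025}, so the plan is to reconstruct the argument given there, which follows Habegger's method of metric inequalities. Write $\GG_n$ for the terms of a uniform filtration of an open subgroup of $\GG$, with $\GG_{n-1}/\GG_n$ elementary abelian $p$-groups, and $\LL_n=\LL^{\GG_n}$. Let $\alpha\in\LL^\times\setminus\mug$ have small height, and choose $n$ minimal such that $\alpha\in\LL_n$.

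\textbf{Step 1 (local ramification).} Using (LPTR), pass to a finite extension so that locally at $v$ the extension is totally ramified. Sen's theorem then states that the ramification groups of the $p$-adic Lie extension $L/K$ satisfy $G^{u}\subseteq\GG_n$ for $u\approx ne$, up to a bounded error. Hence for $\sigma\in\GG_{n-1}$ one has $|\sigma(\beta)-\beta|_v\le p^{-c}|\beta|_v$ for $v$-integral $\beta\in\LL_n$, with $c>0$ independent of $n$.

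\textbf{Step 2 (Habegger's inequality).} By (NC), the normal closure of $\GG_{n-1}/\GG_n$ in $\GG/\GG_n$ is all of $\GG_{n-1}/\GG_n$. Since $\alpha\notin\LL_{n-1}$, this gives some $\sigma\in\GG_{n-1}$, which may be taken in every conjugate, such that $\sigma\alpha\ne\alpha$, or more precisely $\sigma\alpha^{p}\ne\alpha^{p}$ after handling roots of unity. Consider $\gamma=\sigma(\alpha)^{p}-\alpha^{p}$, or the Frobenius-twisted analogue. The estimate from Step 1 holds at every $p$-adic place above $v$ thanks to normality. For archimedean and other places, use the trivial bound $\log|\gamma|_w\le\log 2+p\max(\ldots)$. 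The product formula then yields $h(\alpha)\ge c'>0$, unless $\gamma=0$. If $\gamma=0$, then $\sigma(\alpha)/\alpha$ is a root of unity of $p$-power order.

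\textbf{Step 3 (root-of-unity case via (CE)).} Here use the central element $\tau$: $\tau$ acts on $\mug_{p^\infty}(\LL)$ as $\zeta\mapsto\zeta^g$ with $g>1$, so consider $\tau(\alpha)/\alpha^{g}$. Since $\tau$ is central it commutes with $\sigma$, and one can show that $\tau(\alpha)\alpha^{-g}$ is fixed by more of the group, so it lies in a lower level or is torsion. If it is non-torsion, induct on $n$, bounding its height by $(g+1)h(\alpha)$. If it is torsion, $\alpha^{g}$ is "almost" Galois-equivariant; a standard Amoroso--Zannier type argument then gives either a lower bound or that $\alpha$ is torsion. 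This is the step I expect to be the main obstacle: the recursion must terminate with uniform constants, which requires $n_0$ and the error in Sen's bound to be independent of $\alpha$.

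\textbf{Step 4 (the corollary).} For the second claim, note that (PTR) implies (LPTR). It also implies (NC) after replacing $\KK$ by a finite extension: if the inertia group is open and normal, the normal-closure condition reduces to the Lie algebra being generated as an ideal by itself, which holds for the filtration of $I_v$. Finally, (B) is insensitive to finite base change, because by Northcott a finite extension of a field with (SB) still has (B). So the first part applies.
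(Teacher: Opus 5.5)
The overall architecture is right: Sen's theorem gives a uniform local estimate, a product-formula inequality does the rest, and the auxiliary element $\tau(\alpha)/\alpha^g$ handles the remaining case. The paper only cites this theorem, but its proof of Theorem \ref{thm:Btower} reproduces the argument. However, the global half of your metric argument is missing, and you replace it with claims that fail under (LPTR).

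\textbf{The metric step.} You read (NC) as saying that $\GG_{n-1}/\GG_n$ is its own normal closure, which is vacuous. The actual condition is that the \emph{local} group $G_{n-1}/G_n$, with $G_n=\GG_n\cap\Gal(L/K)$, normally generates $\GG_{n-1}/\GG_n$.
\begin{itemize}
\item This matters because your Step 1 estimate holds only for $\sigma$ in the local group $G_{n-1}$, not for arbitrary $\sigma\in\GG_{n-1}$ as you state. (NC) is what lets you find $\sigma\in G_{n-1}$ and $\eta\in\GG$ with $\sigma(\eta\alpha)/\eta\alpha\notin\mug_{p^\infty}$, after which you replace $\alpha$ by $\eta\alpha$.
\item The estimate does \emph{not} hold at every place above $v$ ``thanks to normality''. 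At $w=\eta(v)$ you would need $\eta^{-1}\sigma\eta$ to lie in the deep ramification group at $v$. Since $D_v$ need not be open, this fails in general.
\item The missing idea is to work in a finite Galois $\MM\subseteq\LL_n$ containing $\alpha$ and use only the orbit $S$ of $\tilde v$ under the centralizer $C(\tilde\sigma)$. The conjugacy class of $\tilde\sigma$ lies in $\Gal(\MM/\MM\cap\LL_{n-1})$, of order at most $[\LL_n:\LL_{n-1}]\le p^A$ with $A=\dim\GG$. Hence $|C(\tilde\sigma)|\ge[\MM:\KK]/p^A$, and $S$ carries at least a proportion $p^{-T}$ of the local degree above $p$, where $p^T\ge p^A[\KK:\QQ]/[K:\QQ_p]$.
\item With so small a proportion and a possibly tiny Sen constant $c$, the single power $\sigma(\alpha)^p-\alpha^p$ cannot beat the archimedean $\log 2$. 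You need the amplification of \cite[Lemma 2.1]{AmorosoDavidZannier2014}: $|\sigma(\gamma^{p^\lambda})-\gamma^{p^\lambda}|_v\le p^{-p^{T+1}}$, with $\lambda$ bounded in terms of $p$ and $c$. The product formula applied to $\sigma(\alpha^{p^\lambda})-\alpha^{p^\lambda}\neq 0$ then gives $h(\alpha)\ge\log(p^p/2)/(2p^\lambda)$.
\end{itemize}
Your ``hence'' in Step 1 also hides a real step. You must convert Sen's upper-numbering inclusions into a lower index $t_n$ with $t_n/e(L_n/L_0)$ bounded below; this is Proposition \ref{prop:indexbound}.

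\textbf{Step 3.} You misdiagnose the obstacle. An induction losing a factor $g+1$ per step would indeed not be uniform, and uniformity of $n_0$ or of Sen's constant does not help. What you are missing is that no induction occurs.
\begin{itemize}
\item Take \emph{any} $\alpha$, put $\beta=\tau(\alpha)/\alpha^g$, and let $n'$ be minimal with $\beta\in\LL_{n'}$.
\item Suppose $n'\ge 1$ and every $\sigma\in\GG_{n'-1}$ gave $\sigma(\beta)/\beta\in\mug_{p^\infty}$. Writing $\xi=\sigma(\alpha)/\alpha$, centrality of $\tau$ gives $\sigma(\beta)/\beta=\tau(\xi)/\xi^g$. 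Then $h(\xi)=gh(\xi)$, so $\xi\in\mug$.
\item By (CE) applied to the $p$-part of $\xi$, the ratio $\tau(\xi)/\xi^g$ has order prime to $p$, so it equals $1$. Thus $\GG_{n'-1}$ fixes $\beta$, contradicting minimality of $n'$.
\item Hence $\beta$ lies in the set $\mathcal{Z}$ of \eqref{eq:Z-tower}, where your Step 2 applies, or in the number field $\LL_0$, where Northcott applies. In both cases $h(\beta)\le(g+1)h(\alpha)$.
\item If $\beta\in\mug$, then $h(\alpha)=h(\tau\alpha)=gh(\alpha)$, so $\alpha\in\mug$ directly; no Amoroso--Zannier argument is needed.
\end{itemize}
This one-step reduction is the one the paper invokes through \cite[Proposition 5.4]{Piras2026}.

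\textbf{Step 4.} This is fine in substance: (PTR) gives $\GG_n\subseteq I_v$, hence $G_n=\GG_n$, for large $n$, so (NC) is trivial. No base change is needed, since property \B concerns only $\LL$.
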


\noindent We recall a further condition appearing in the work of Amoroso--David--Zannier \cite{AmorosoDavidZannier2014}.

% \begin{definition}\label{def:propADZ} Let $\KK$ be a number field and $\LL/\KK$ be a Galois extension with Galois group $\GG$. Let $v$ be a non Archimedean place of $\KK$. We say that $\LL/\KK$ has \emph{property \ADZ} at $v$ if the extension $\LL^{Z(\GG)}/\KK$ has bounded local degrees over $v$. We say that $\LL/\KK$ has \emph{property \ADZ} if it has property \ADZ at some non Archimedean place $v$ of $\KK$.
% \end{definition}

\begin{definition}\label{def:propADZ} Let $\KK$ be a number field, $v$ a non Archimedean place of $\KK$ and $\LL/\KK$ a Galois extension with Galois group $\GG$. We say that $\LL/\KK$ has \emph{property \ADZ} at $v$ if the extension $\LL^{Z(\GG)}/\KK$ has bounded local degrees over $v$. We say that $\LL/\KK$ has \emph{property \ADZ} if it has property \ADZ at some non Archimedean place $v$ of $\KK$.
\end{definition}

% If $\LL$ is an extension of a number field $\KK$ that is \ADZ  at a place $v$, then it has \B by \cite[Theorem 1.5]{AmorosoDavidZannier2014}. \pietro{Per definizione di ADZ $\LL$ è sempre un'estensione di un campo di numeri $\KK$, quindi si può dire 
If $\LL/\KK$ has \ADZ at a place $v$ then it has \B by \cite[Theorem 1.5]{AmorosoDavidZannier2014}. Moreover, since every finite extension of $\LL$ is the compositum of $\LL$ with a finite extension of $\KK$, the following is an immediate corollary of a result of the second author.

%\begin{proposition}[{\cite[Theorem 5.8]{Piras2026}}]\label{prop:ADZB}
%If an $\LL$ of a number field $\KK$ has property \ADZ at a place $v$, then it has the strong property \SB.
%\end{proposition}
\begin{proposition}[{\cite[Theorem 5.8]{Piras2026}}]\label{prop:ADZB}
If $\LL/\KK$ has property \ADZ at a place $v$, then it has property \SB.
\end{proposition}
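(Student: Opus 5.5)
The plan is to reduce to the quoted theorem of Piras together with the Amoroso--David--Zannier theorem. Property \SB says that every finite extension $\LL'$ of $\LL$ has property \B.

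First I will write $\LL'$ in a form to which \ADZ can be applied. Since $\LL'/\LL$ is finite, $\LL'=\LL(\beta)$ for some algebraic $\beta$. Put $\KK'=\KK(\beta)$, a number field, so that $\LL'=\LL\KK'$. Then $\LL'/\KK'$ is Galois, because $\LL/\KK$ is Galois. Restriction identifies $\Gal(\LL'/\KK')$ with the subgroup $H=\Gal(\LL/\LL\cap\KK')$ of $\GG=\Gal(\LL/\KK)$. This subgroup has finite index.

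Next I will show that $\LL'/\KK'$ has property \ADZ at a place $w$ of $\KK'$ above $v$. The key observation is that $Z(\GG)\cap H\subseteq Z(H)$. Hence the fixed field $\LL'^{Z(H)}$ is contained in the fixed field $\LL'^{Z(\GG)\cap H}$. Via the restriction isomorphism, this latter field equals $\LL^{Z(\GG)\cap H}\KK'$. I then bound local degrees in three steps:
\begin{itemize}
\item $\LL^{Z(\GG)\cap H}$ is a finite extension of $\LL^{Z(\GG)}$, of degree at most $[\GG:H]$;
\item $\LL^{Z(\GG)}$ has bounded local degrees over $v$ by hypothesis;
\item so $\LL^{Z(\GG)\cap H}$ has local degrees over $v$ bounded by $[\GG:H]$ times that bound.
\end{itemize}
Adjoining the number field $\KK'$ multiplies local degrees by at most $[\KK':\KK]$. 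Since local degrees only decrease in subextensions, $\LL'^{Z(H)}/\KK'$ has bounded local degrees above $w$. This is \ADZ for $\LL'/\KK'$.

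Finally, \cite[Theorem 1.5]{AmorosoDavidZannier2014} applied to $\LL'/\KK'$ gives property \B for $\LL'$. Since $\LL'$ was an arbitrary finite extension of $\LL$, this is \SB. This is exactly the content of \cite[Theorem 5.8]{Piras2026}, which one may alternatively invoke directly via the compositum remark preceding the statement.

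The only delicate point is the group-theoretic step: the center of the subgroup $H$ can be larger than $Z(\GG)\cap H$. The argument sidesteps this by using the inclusion $Z(\GG)\cap H\subseteq Z(H)$, which goes in the favourable direction for fixed fields. It also uses that boundedness of local degrees is preserved under finite extensions and under subfields.
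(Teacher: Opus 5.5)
Your proof is correct, but it is more self-contained than the paper's. The paper makes the same first reduction: every finite extension of $\LL$ is a compositum $\LL\KK'$ with $\KK'/\KK$ finite. It then simply invokes \cite[Theorem 5.8]{Piras2026} as a black box.

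You instead prove directly that \ADZ is stable under finite base change. The inclusion $Z(\GG)\cap H\subseteq Z(H)$ gives $\LL'^{Z(H)}\subseteq \LL^{Z(\GG)\cap H}\KK'$. The right-hand side is an extension of $\LL^{Z(\GG)}$ of degree at most $[\GG:H]\,[\KK':\KK]$, so its local degrees above $v$ stay bounded. You then apply \cite[Theorem 1.5]{AmorosoDavidZannier2014} to $\LL'/\KK'$.

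Your route buys transparency and a slightly stronger intermediate fact: \ADZ itself, not merely \B, passes to every $\LL\KK'/\KK'$, and you use nothing beyond Amoroso--David--Zannier. The paper's route is a one-line citation. It may inherit extra information from Piras's result, but it hides the mechanism.

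The only routine point worth stating explicitly is that $Z(\GG)\cap H$ is closed. This holds because centers of profinite groups are closed and $H$ is open. Closedness is what lets the infinite Galois correspondence give $[\LL^{Z(\GG)\cap H}:\LL^{Z(\GG)}]=[Z(\GG):Z(\GG)\cap H]\le[\GG:H]$. It also justifies your identification of $\LL'^{Z(\GG)\cap H}$ with $\LL^{Z(\GG)\cap H}\KK'$ via restriction.
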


\section{An application of a theorem of Sen}\label{sec:sen}

In this section, we generalize \cite[Proposition 1.3]{ContiTerracini2025} to the case when the base field $F/\QQ_p$ is not assumed to be finite, but only finitely ramified.

We first recall the standard notation for upper and lower ramification groups, following Serre \cite[Chapter IV, \S3]{Serre1968}  Given a finite Galois extension $L/F$ in $\Qbar_p$, for each integer $i\geq -1$ let $G[i]$ denote the $i$-th ramification subgroup of $G\coloneqq \Gal(L/F)$: then $G[-1]=G$, and for $i\geq  0$  
\[G[i]=\{\sigma\in G\ |\ \sigma(x)\equiv x\mod \mathfrak{P}^{i+1}, \hbox{ for every } x\in\calo_L\},\] 
where $\mathfrak{P}$ is the maximal ideal in $\calo_L$. For $u\in [-1,\infty)$, we set $G[u]= G[\lceil u\rceil ] $, where $\lceil u \rceil$ is the smallest integer $\geq u$. As in \cite[Chapter IV, \S 3]{Serre1968}, we define the function $\varphi_{L/F}\colon[0,+\infty)\to[0,+\infty)$ and its inverse $\psi_{L/F}$. They  allow to define the \textit{upper numbering} of the ramification groups: \[G(u)=G[\psi_{L/F}(u)], \quad\hbox{ that is } G[u]=G(\varphi_{L/F}(u)).\]

Now let $\QQ_p\subseteq F \subseteq L\subseteq \Qbar_p$ be a tower of fields such that $L/F$ is Galois and totally ramified and  $F/\QQ_p$ has a  finite ramification index $e = e(F/\QQ_p)$. Assume that the Galois group $G = \Gal(L/F)$ is a $p$-adic Lie group, with $\dim(G)> 0$.
Let 
\begin{equation*} \ldots\subseteq G_n\subseteq G_{n-1}\subseteq \ldots\subseteq G=\Gal(L/F) \end{equation*}
be a Lie filtration on $G$, as defined in \cite[\S 3]{Sen1973}. By Sen's theorem \cite{Sen1972}, there exists a constant $c > 0$ such that
\begin{equation}\label{eq:sen} G(ne + c) \subseteq  G_n\subseteq  G(ne - c),\end{equation}
for all $n$, with $G(r) = G$ for $r < 0$. We set:
\begin{itemize}
\item $Q_n=G/G_n$ and $|Q_n|=e_n$,
\item $L_n=L^{G_n}$, so that $\Gal(L_n/F)=Q_n$,
\item $Q_n[i]=i$-th lower ramification groups of $L_n/F$,
\item $Q_n(i)=i$-th upper ramification groups of $L_n/F$.
\end{itemize}

Let  $r^n_1<\ldots <r^n_{s(n)}$ be the jumps in the chain $(Q_n[i])_i$, so that 
\begin{equation*}Q_n=Q_n[r_1^n]\supsetneq Q_n[r_2^n]\supsetneq \ldots\supsetneq Q_n[r_{s(n)}^n]\supsetneq \{1\}.\end{equation*}

Let $A>0$ be such that for every $n$, $[G_n:G_{n-1}]\leq p^A$.
\begin{proposition}
    The number of jumps in the chain $(G(i))_i$ lying in an interval of width $\delta$ in $[0,+\infty)$ is bounded from above by $A\left( \frac {\delta+2(c+e)} e \right )$. In particular, the set of jumps of $L/F$ is discrete in $[0,+\infty)$.
\end{proposition}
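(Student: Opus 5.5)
The plan is to use Sen's inclusions \eqref{eq:sen} to trap every jump of the upper filtration $(G(u))_u$ between consecutive levels of the Lie filtration, and then to count jumps by index. The key observation is that a jump $u$ is a value where $G(u)\supsetneq G(u')$ for all $u'>u$. Every strict decrease of a closed subgroup inside a chain must consume at least a factor $p$ in the index, as long as we can measure it inside a finite quotient $Q_n=G/G_n$.

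First fix an interval $[a,a+\delta]\subseteq[0,+\infty)$ and let $u_1<\dots<u_m$ be the jumps of $(G(u))_u$ in it. Choose integers $n_1\le n_2$ such that
\begin{equation*}
n_1 e - c\le a, \qquad a+\delta< n_2 e+c .
\end{equation*}
Taking $n_1=\lfloor (a+c)/e\rfloor$ and $n_2=\lfloor (a+\delta-c)/e\rfloor+1$ would do, with the convention $G(r)=G$ for $r<0$. This gives
\begin{equation*}
n_2-n_1\le \frac{\delta+2c}{e}+1\le \frac{\delta+2(c+e)}{e}.
\end{equation*}
By \eqref{eq:sen} we then get $G(u)\subseteq G(a)\subseteq G(n_1e-c)$, and hence $G(a)\subseteq G_{n_1-1}$ after adjusting indices by one; this adjustment is exactly where the extra $+2e$ slack is used. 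Likewise $G_{n_2}\subseteq G(n_2e-c)\cdots$. Concretely, I will pick the indices so that
\begin{equation*}
G_{N}\subseteq G(u_m')\subseteq G(u_1)\subseteq G_{M}
\end{equation*}
for some $u_m'>u_m$, with $N-M\le (\delta+2(c+e))/e$.

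Next, the subgroups $G(u_1)\supsetneq G(u_2^{-})\supsetneq\cdots$ form a chain of closed subgroups, with a strict drop at each jump, all lying between $G_N$ and $G_M$. Passing to the finite group $G_M/G_N$, each strict inclusion of subgroups containing $G_N$ multiplies the index by at least $p$. Here I need that $G(u)\supseteq G_N$, i.e.\ that the groups are open, which follows from the left inclusion of \eqref{eq:sen}. Also the quotient $G_M/G_N$ is a $p$-group for a Lie filtration (the $G_n$ are uniform pro-$p$ for large $n$; for small $n$, absorb this into the constants, or note that the hypothesis $[G_{n-1}:G_n]\le p^A$ is stated as a $p$-power). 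Hence
\begin{equation*}
m\le \log_p[G_M:G_N]\le A(N-M)\le A\,\frac{\delta+2(c+e)}{e}.
\end{equation*}
Discreteness follows immediately: any bounded interval contains finitely many jumps.

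The main obstacle is the bookkeeping with Sen's inclusions. I have to arrange the index choices so that all the groups $G(u)$ with $u$ in the interval, together with one group just beyond $u_m$ (needed to witness the last jump), are sandwiched between two Lie-filtration levels whose distance matches the stated bound. I must also make sure that strictness of the upper filtration persists inside the finite quotient; this holds because each $G(u)$ in the range contains $G_N$.

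A secondary point is that $F$ is only finitely ramified, not finite. Sen's theorem \eqref{eq:sen} is taken as given in this setting by the paper. If needed, I would justify it by descending $L/F$ to a finite subextension $F_0\subseteq F$ with $e(F/F_0)=1$ over which a model of the Lie extension is defined, noting that upper numbering is unchanged under unramified base change.
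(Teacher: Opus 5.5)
Your strategy is the paper's: trap every $G(u)$ with $u$ in the interval between two levels of the Lie filtration using \eqref{eq:sen}, then count strict drops by index. Your target sandwich $G_N\subseteq G(u_m')\subseteq G(u_1)\subseteq G_M$ with $N-M\le(\delta+2(c+e))/e$ is exactly the right one. The gap is that the index choices you actually write down cannot produce it, because the $\pm c$ sits on the wrong side at both ends.

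\begin{itemize}
\item Lower end. From $n_1e-c\le a$ you only get $G(a)\subseteq G(n_1e-c)$. But \eqref{eq:sen} gives $G_{n_1}\subseteq G(n_1e-c)$, which points the wrong way, so no containment of $G(a)$ in any Lie level follows. To get $G(a)\subseteq G_M$ you must use the left inclusion $G(Me+c)\subseteq G_M$, which requires $Me+c\le a$. Passing from $n_1=\lfloor (a+c)/e\rfloor$ to $n_1-1$ guarantees this only when $2c\le e$. In general the required shift is about $2c/e$ levels, so ``adjusting indices by one'' fails.
\item Upper end. Symmetrically, $G_N\subseteq G(b')$ for some $b'>b=a+\delta$ comes from the right inclusion $G_N\subseteq G(Ne-c)$. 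It needs $Ne-c>b$, not $b<Ne+c$.
\item The same mix-up appears when you attribute the openness $G_N\subseteq G(u)$ to the left inclusion; it comes from the right one.
\end{itemize}

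The correct bookkeeping, as in the paper, is as follows. Take $M=\lfloor (a-c)/e\rfloor$ (replaced by $-1$, with $G_{-1}=G$, if it is smaller) and $N=\lfloor (b+c)/e\rfloor+1$. Then:
\begin{itemize}
\item $G(a)\subseteq G(Me+c)\subseteq G_M$, trivially if $M=-1$;
\item $G_N\subseteq G(Ne-c)\subseteq G(b')$ for any $b<b'\le Ne-c$;
\item $N-M\le (\delta+2c)/e+2=(\delta+2(c+e))/e$.
\end{itemize}
So the extra $2e$ in the bound comes from the two roundings, not from an index shift. With this choice your counting step goes through.

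One smaller point on the claim that each strict drop costs a factor at least $p$. The clean justification is not a $p$-group property of $G_M/G_N$, and ``absorbing into constants'' would not give the exact stated bound. Rather, $G(u)$ for $u>0$ lies in the wild inertia group, which is pro-$p$. Only the tame jump at $u=0$ can have index prime to $p$, a point the paper's proof also passes over.
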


\begin{proof}
    Let $a,b\in[0,+\infty)$ with $b-a=\delta>0$. We put $G_{-1}=G$. For every $h,k\in \NN\cup\{-1\}$ such that  $h\geq \frac{b+c}e$ and $k\leq \frac{a-c} e$ we have by \eqref{eq:sen}, 
    \begin{equation*}
        G_h\subseteq G(eh-c)\subseteq G(b)\subseteq G(a)\subseteq G(ek+c)\subseteq G_k.
    \end{equation*}
    Since $[G_k:G_h]\leq p^{A(h-k)}$ there cannot be more than $A(h-k)$ jumps in the interval $[a,b)$ since for every jump the index of the ramification groups increases by a factor $p$. The best possible bound will be achieved by taking the smallest $h$ bigger than $\frac{b+c}e$ and the biggest $k$ smaller than $\frac{a-c} e$: setting $h=\lceil \frac{b+c}e\rceil $ and $k=\lfloor \frac{a-c} e\rfloor$ we have
    \begin{equation*}
        h-k\leq \left (\frac {b+c}e+1\right )-\left (\frac{a-c} e -1\right ) =\frac {\delta+2c} e+2
    \end{equation*}
    which proves the claim.
\end{proof}
\begin{lemma}\label{lem:effe1}
    There exists $f\in\NN$ such that, for every  jump $u$ in the chain $(G(i))_i$,  there exists $\epsilon >0$ such that $[G(u):G(u+\epsilon)]\leq p^f$. 
\end{lemma}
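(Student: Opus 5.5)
The plan is to derive the lemma directly from Sen's inclusions \eqref{eq:sen} together with the bound $[G_{n-1}:G_n]\leq p^A$. Concretely, I expect the choice
\[
f = A\left(\left\lceil \tfrac{2c}{e}\right\rceil+2\right)
\]
to work. The idea is that a jump $u$ cannot cause a larger drop than the whole stretch of the Lie filtration lying between $u-c$ and $u+c$, up to rounding.

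First, fix a jump $u$. By the preceding proposition the jumps are discrete, so the upper filtration is locally constant to the right of $u$: there is $\epsilon>0$ such that $G(u+\epsilon)=G(u')$ for all $u'\in(u,u+\epsilon]$. Shrinking $\epsilon$, we may assume $\epsilon\le 1$. It will be enough to bound $[G(u):G(u+\epsilon)]$ from above. Since $G(u+\epsilon)$ only decreases as $\epsilon$ grows, it is in fact enough to bound $[G(u):G(b)]$ for a slightly larger $b>u$ that depends only on $u$.

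Second, sandwich the two groups between terms of the Lie filtration, exactly as in the proof of the proposition:
\begin{itemize}
\item take $k=\lfloor (u-c)/e\rfloor$, with the convention $G_{-1}=G$ if this is negative, so that $G(u)\subseteq G(ek+c)\subseteq G_k$;
\item take $h=\lceil (u+\epsilon+c)/e\rceil$, so that $G_h\subseteq G(eh-c)\subseteq G(u+\epsilon)$.
\end{itemize}
Then
\[
[G(u):G(u+\epsilon)]\le [G_k:G_h]\le p^{A(h-k)},
\]
and, using $\epsilon\le 1$,
\[
h-k\le \frac{2c+\epsilon}{e}+2\le \frac{2c+1}{e}+2 .
\]
This bound is independent of $u$. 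Hence we may set $f=A\left(\left\lceil \frac{2c+1}{e}\right\rceil+2\right)$, adjusting the earlier guess for the value of $f$.

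There is one point I expect to need care, namely the convention $G(r)=G$ for $r<0$ near $u=0$. The monotonicity of the filtration makes the inclusions $G(u)\subseteq G(ek+c)$ and $G(eh-c)\subseteq G(u+\epsilon)$ valid in every case, since $ek+c\le u$ and $eh-c\ge u+\epsilon$. Also, $[G_{-1}:G_0]$ should be covered by the constant $A$, which we may enlarge if necessary. Beyond this, the main subtlety is the existence of $\epsilon$ itself, and that is supplied by the discreteness statement in the previous proposition. So the argument is short.
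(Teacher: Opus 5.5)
Your argument is correct and is essentially the paper's proof: you sandwich $G_h\subseteq G(u+\epsilon)\subseteq G(u)\subseteq G_k$ using Sen's inclusions \eqref{eq:sen}, with $k=\lfloor (u-c)/e\rfloor$, and bound the index by $[G_k:G_h]\le p^{A(h-k)}$. The only difference is that the paper fixes $h=\lfloor (u+c)/e\rfloor+1$ and takes $0<\epsilon<he-c-u$, which gives the slightly sharper $h-k\le 2c/e+2$; also, your appeal to the discreteness of the jumps is not needed, since your bound already holds for every $\epsilon\in(0,1]$.
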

\begin{proof}
    Let $u$ be a jump in the filtration. We want to squeeze $G(u)$ between two groups $G_h$ and $G_k$ using \eqref{eq:sen}. The best possible values for $h$ and $k$ such that
    \begin{equation*}
        G_h\subseteq G(he-c)\subsetneq G(u)\subseteq G(ke+c)\subseteq G_k
    \end{equation*}
    is satisfied are 
    \begin{equation*}
        h=\left\lfloor \frac{u+c}e\right\rfloor +1,\quad\quad k=\left \lfloor \frac{u-c} e\right \rfloor.
    \end{equation*}
    For $0<\epsilon< he-c-u$ we have $G(he-c)\subseteq G(u+\epsilon)\subseteq G(u)$ so
    \[[G(u):G(u+\epsilon)]\leq [G_k:G_h]=p^{A(h-k)},\]
    and \[ h-k\leq \frac {u+c} e+1 -\frac {u-c}{e}+1= \frac {2c} e +2.
    \]
    \\
    We conclude by putting $f=A\left (\frac {2c} e +2\right )$.
\end{proof}
\begin{lemma}\label{lem:effe}
There exists a positive integer $f$ such that, for every $n\in\NN$,
\begin{equation*}
    \begin{split}
        & p\leq |Q_n[r^n_{s(n)}]| \leq  p^f,\\
        & p\leq [Q_n[r^n_{i}]:Q_n[r^n_{i+1}]]\leq p^f   \quad \hbox{for } i=2,\ldots, s(n)-1, \\
        & 1<[Q_n:Q_n[r^n_{2}]]\leq  p^f. 
    \end{split}
\end{equation*}
\end{lemma}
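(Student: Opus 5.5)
We will deduce this from Lemma \ref{lem:effe1} by transferring the bound on the upper-numbering filtration of $G$ to the lower-numbering filtrations of the finite quotients $Q_n$. The bridge is Herbrand's theorem: since $G_n$ is normal and $L_n=L^{G_n}$, the upper numbering passes to quotients, so $Q_n(v)=G(v)G_n/G_n$ for every $v\ge -1$. Moreover $Q_n[i]=Q_n(\varphi_{L_n/F}(i))$ and $\varphi_{L_n/F}$ is increasing. Hence the jumps $r^n_i$ of the lower filtration of $Q_n$ correspond bijectively to the jumps $u^n_i=\varphi_{L_n/F}(r^n_i)$ of the upper filtration of $Q_n$, and each successive index $[Q_n[r^n_i]:Q_n[r^n_{i+1}]]$ equals $[Q_n(u^n_i):Q_n(u^n_i+\epsilon)]$ for all sufficiently small $\epsilon>0$. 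We take $f$ to be the constant of Lemma \ref{lem:effe1}.

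The main step is the upper bound. An upper jump $u$ of $Q_n$ is necessarily an upper jump of $G$. Indeed, if $G(u)=G(u+\epsilon)$ for some $\epsilon>0$, then $Q_n(u)=Q_n(u+\epsilon)$. For $\epsilon$ small we then have
\[
[Q_n(u):Q_n(u+\epsilon)]=[G(u)G_n:G(u+\epsilon)G_n]\le [G(u):G(u+\epsilon)]\le p^f
\]
by Lemma \ref{lem:effe1}. Here we use that the jumps of $G$ are discrete, by the preceding Proposition, so $\epsilon$ can be taken uniform in the relevant range.

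The last jump needs separate treatment, since its successor is $\{1\}$. We have $Q_n[r^n_{s(n)}]=Q_n(u)$ with $Q_n(u+\epsilon)=1$, so the same index bound gives $|Q_n[r^n_{s(n)}]|\le p^f$. The first jump also needs a remark: the chain might start at $r^n_1=-1$ if $Q_n$ is not a $p$-group. In that case the first index is $[Q_n[-1]:Q_n[0]]=1$ because $L_n/F$ is totally ramified, so $Q_n[0]=Q_n$. This is why only the weaker statement $1<[\cdot]$ is claimed for $[Q_n:Q_n[r^n_2]]$. We will reindex so that $r^n_1$ is the first genuine jump, and the bound $p^f$ then follows exactly as above.

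For the lower bounds we argue as follows. Strictness of each jump gives indices $>1$. Each $Q_n[i]$ with $i\ge1$ is a $p$-group, so the successive quotients $Q_n[r^n_i]/Q_n[r^n_{i+1}]$ for $r^n_i\ge1$ are nontrivial $p$-groups, of order at least $p$. This covers the middle indices and $|Q_n[r^n_{s(n)}]|$. The only index not controlled this way is the first one, involving $Q_n/Q_n[1]$, which is tame and cyclic of order prime to $p$; there we only claim $>1$.

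The most delicate point is the uniformity of $\epsilon$ when comparing $Q_n(u+\epsilon)$ with $G(u+\epsilon)$. This is handled by noting that in the proof of Lemma \ref{lem:effe1} the bound holds for every $0<\epsilon<he-c-u$, and that $Q_n$ has finitely many jumps, so $\epsilon$ can be shrunk to lie below the next jump of $Q_n$.
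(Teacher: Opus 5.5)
Your proposal is correct and follows the paper's proof: compare upper numberings via Herbrand, bound $[Q_n(u):Q_n(u+\epsilon)]=[G(u)G_n:G(u+\epsilon)G_n]$ by $[G(u):G(u+\epsilon)]$ (the paper justifies this with Dedekind's modular law), and apply Lemma \ref{lem:effe1}; shrinking $\epsilon$ is harmless because the filtration is monotone. Your derivation of the lower bounds from $Q_n[1]$ being a $p$-group fills in a step the paper leaves implicit, but drop the aside about $r^n_1=-1$: it cannot be a jump since $L_n/F$ is totally ramified, and, as you say later, the only reason for the weaker bound $1<[Q_n:Q_n[r^n_2]]$ is the prime-to-$p$ tame quotient $Q_n[0]/Q_n[1]$.
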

\begin{proof}
    Since each $Q_n(i)$ is a quotient of $G(i)$ we have
    \begin{equation*}
        \frac{Q_n(u)}{Q_n(u+\epsilon)}=\frac{G(u)G_n}{G_n}\bigg /\frac{G(u+\epsilon)G_n}{G_n}\simeq \frac{G_nG(u)}{G_nG(u+\epsilon)}.
    \end{equation*}
    Using Dedekind's modular law we have
    \begin{equation*}
        G(u)\cap G(u+\epsilon)G_n= G(u+\epsilon)(G(u)\cap G_n)
    \end{equation*}
    that shows
    \begin{equation*}
        \frac{G(u)}{G(u+\epsilon)}\twoheadrightarrow\frac{G(u)}{G(u+\epsilon)(G(u)\cap G_n)}\simeq \frac{G_nG(u)}{G_nG(u+\epsilon)}.
    \end{equation*}
    Since
    \begin{equation*}
        [Q_n(u):Q_n(u+\epsilon)]=\left[G_nG(u):G_nG(u+\epsilon)\right]\leq [G(u):G(u+\epsilon)],
    \end{equation*}
    for any pair of adjacent %\lt{forse meglio "adjacent"?}
    jumps $r<s$ in the chain $(Q_n[i])_i$ we have by Lemma \ref{lem:effe1},
    \begin{equation*}
        p\leq [Q_n[r]:Q_n[s]] \leq p^f
    \end{equation*}
    which proves the claim.
\end{proof}

\begin{remark} Assume that $F/\QQ_p$ is finite and let $k_F$ be the residue field of $F$. By \cite[Chapter IV, \S 2]{Serre1968}, the quotient $Q_n/Q_n[1]$ is isomorphic to a subgroup of $k_F^\times$, and for $j\geq 1$, the quotients $Q_n[j]/Q_n[j+1]$ are isomorphic to subgroups of $k_F$. Therefore, one can take $f=[k_F:\FF_p]$ in Lemma \ref{lem:effe}.
\end{remark} 

\begin{proposition}\label{teo:bound1}
Assume that the above conditions  are satisfied. %and that for every $n$, $[G_n:G_{n-1}]\leq p^A$ for some $A>0$. %\notalealine{Is this condition already implicit in the definition of $p$-adic Lie group?} \notaandrline{yes, because $G_n$ is equivalent in scaling 1 to a Lie filtration} 
Then for $j_0\in\NN$, there exists a constant $C>0$ (depending on $j_0$) such that for every $n$ such that $s(n)\geq j_0$,
\begin{equation*}  \frac{r^n_{{s(n)}-j_0}} {e_n}\geq C.\end{equation*}
%(where $\lceil \cdot\rceil$ denotes the ceiling of a real number).
\end{proposition}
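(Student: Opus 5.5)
The plan is to pass to the upper numbering, where jumps of $Q_n$ can be controlled uniformly in $n$ through those of $G$. Then return to the lower numbering with Herbrand's function $\psi_{L_n/F}$, whose derivative is large exactly where the ramification groups are small.

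\emph{Step 1 (jumps of $Q_n$ are jumps of $G$).} By Herbrand's theorem, $Q_n(v)=G(v)G_n/G_n$ for all $v$. Hence every upper jump of $Q_n$ is a jump of the chain $(G(i))_i$. Write $v^n_1<\dots<v^n_{s(n)}$ for the upper jumps of $L_n/F$, so that $v^n_i=\varphi_{L_n/F}(r^n_i)$ and $r^n_i=\psi_{L_n/F}(v^n_i)$. By the first Proposition of this section, applied with $\delta=1$, any interval of length $1$ contains at most $M:=A\bigl(\tfrac{1+2(c+e)}{e}\bigr)$ jumps of $G$. Put $K=\lfloor M\rfloor+1$. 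Then for every $n$ and every index $i>K$ we get $v^n_i-v^n_{i-K}\ge 1$; this bound is uniform in $n$.

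\emph{Step 2 (size of the small ramification groups).} By Lemma \ref{lem:effe}, each step between consecutive jumps has index at most $p^f$, and the last nontrivial group has order at most $p^f$. Therefore $|Q_n[r^n_{i}]|\le p^{f(s(n)-i+1)}$. Set $i=s(n)-j_0$ and $v=v^n_{s(n)-j_0}$. For $t\in(v^n_{i-K},v]$ we have $Q_n(t)\supseteq Q_n(v)$, and $Q_n(t)=Q_n(v^n_{i-K+1})$ up to the last step. In all cases $|Q_n(t)|\le p^{f(j_0+K)}$. Now use
\[
r^n_{s(n)-j_0}=\psi_{L_n/F}(v)=\int_0^{v}[Q_n(0):Q_n(t)]\,dt .
\]
Since $L_n/F$ is totally ramified, $Q_n(0)$ differs from $Q_n$ only by the tame quotient. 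That quotient has order at most $p^f$ by Lemma \ref{lem:effe}, which is where the bound on $[Q_n:Q_n[r^n_2]]$ enters. Restricting the integral to $(v^n_{i-K},v]$, an interval of length at least $1$, gives
\[
r^n_{s(n)-j_0}\ \ge\ \frac{e_n}{p^{f}\,p^{f(j_0+K)}} .
\]
So $C=p^{-f(j_0+K+1)}$ works whenever $s(n)\ge j_0+K+1$.

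\emph{Step 3 (remaining $n$).} If $j_0\le s(n)<j_0+K+1$, then $s(n)$ is bounded, so by Lemma \ref{lem:effe} the degree $e_n\le p^{f(s(n)+1)}$ is bounded. Since $\dim G>0$, $e_n\to\infty$, so only finitely many $n$ remain. For these I will integrate over $(0,v]$ instead of an interval of length $1$. This needs $r^n_{s(n)-j_0}>0$, which holds unless it is the tame jump $r^n_1=0$. That edge case (the index $s(n)-j_0=1$, which must be read with the convention used later for the indices) is the point I expect to need care. Outside it, one takes the minimum over finitely many positive ratios and then the minimum with $p^{-f(j_0+K+1)}$.

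The main obstacle is Step 1. The gaps between consecutive jumps of $G$ need not be bounded below, so one cannot use a single gap. Grouping $K$ consecutive jumps via the counting Proposition is what converts discreteness into a uniform interval of length $1$ on which the index $[Q_n(0):Q_n(t)]$ is comparable to $e_n$.
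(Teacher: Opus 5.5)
Your argument is correct and follows the route the paper intends. The paper gives no proof of its own: it defers to the proof of \cite[Theorem 1.1]{ContiTerracini2025}, with Lemma \ref{lem:effe} supplying the per-jump index bound $p^f$ that finiteness of $F/\QQ_p$ provided there, and your combination of Herbrand's integral $\psi_{L_n/F}(v)=\int_0^v[Q_n(0):Q_n(t)]\,dt$, the Sen-based count of jumps in intervals of width $1$, and Lemma \ref{lem:effe} is exactly that argument.

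Two small remarks:
\begin{itemize}
\item Since $L_n/F$ is totally ramified, $Q_n(0)=Q_n[0]=Q_n$. Your extra factor $p^{-f}$ for a ``tame quotient'' is therefore unnecessary, though harmless.
\item The edge cases you meet in Step 3 are artifacts of the literal statement, not gaps in your proof. These are $s(n)=j_0$, where $r^n_0$ is undefined in the paper's indexing, and $s(n)=j_0+1$ with a tame jump $r^n_1=0$. They concern only finitely many $n$ and do not matter for Proposition \ref{prop:indexbound}, which only needs large $n$.
\end{itemize}
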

The proof is the same as in \cite[Theorem 1.1]{ContiTerracini2025}, using Lemma \ref{lem:effe} in place of the finiteness assumption of $F/\QQ_p$.          As an immediate consequence we obtain (see \cite[Proposition 1.3]{ContiTerracini2025}):

\begin{proposition}%\notalea{Qui theorem mi sembra un po' fuorviante, per noi è un risultato strumentale}\notaandr{hai ragione}
\label{prop:indexbound}
For every $n\geq 1$, there exists $t_n\in\NN$ such that $Q_n[t_n]$ contains $G_{n-1}/G_n$, and that $\frac{e_n}{t_n}$ is bounded from above by a constant as $n\to\infty$. 
\end{proposition}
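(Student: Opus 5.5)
We will deduce the statement from Proposition \ref{teo:bound1}, applied with a fixed $j_0$, together with Sen's inclusions \eqref{eq:sen}. Fix $n\geq 1$. The group $G_{n-1}/G_n$ is a normal subgroup of $Q_n$ of order at most $p^A$. We take $t_n$ to be the largest lower index $t$ such that $Q_n[t]\supseteq G_{n-1}/G_n$. Such a $t$ exists because $Q_n[0]=Q_n$ (the extension is totally ramified) and $Q_n[i]=\{1\}$ for $i$ large. It remains to bound $e_n/t_n$ from above.

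The first step locates $G_{n-1}/G_n$ in the ramification filtration. We use the upper numbering, which is compatible with quotients. By \eqref{eq:sen}, $G_{n-1}\subseteq G((n-1)e-c)$. Passing to the quotient $Q_n=G/G_n$ gives
\[
G_{n-1}/G_n\subseteq Q_n((n-1)e-c).
\]
In the other direction, $G((n+1)e+c)\subseteq G_{n+1}\subseteq G_n$, so $Q_n(u)=\{1\}$ for $u\geq (n+1)e+c$. Thus the upper ramification filtration of $L_n/F$ drops from containing $G_{n-1}/G_n$ to being trivial inside an upper-numbering window of width $2e+2c$, which is bounded independently of $n$. By the preceding Proposition, this window contains at most a bounded number $j_0$ of jumps, say $j_0=\lceil A(2e+4c+2e)/e\rceil+1$. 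Since $\varphi_{L_n/F}$ is increasing and sends lower jumps to upper jumps, the largest lower index $t_n$ with $Q_n[t_n]\supseteq G_{n-1}/G_n$ is at least the lower jump $r^n_{s(n)-j_0}$, provided $s(n)\geq j_0$.

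The second step applies Proposition \ref{teo:bound1} with this $j_0$. For all $n$ with $s(n)\geq j_0$ it gives
\[
\frac{t_n}{e_n}\geq \frac{r^n_{s(n)-j_0}}{e_n}\geq C,
\]
so $e_n/t_n\leq C^{-1}$. If $s(n)<j_0$, then $Q_n$ has a bounded number of jumps. By Lemma \ref{lem:effe} each successive quotient has order at most $p^f$, so $e_n\leq p^{fj_0}$ is bounded. Since $\dim G>0$, we have $e_n\to\infty$, so this case occurs only for finitely many $n$. For those finitely many $n$ we may take any $t_n\geq 1$, because only the asymptotic bound matters.

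The main obstacle is the bookkeeping between the two numberings in the first step. We must check that inclusions in upper numbering translate into the claimed position among the lower jumps $r^n_i$, using the facts that $Q_n[r^n_i]=Q_n(\varphi(r^n_i))$ and that jumps correspond bijectively. We must also handle the edge case where $G_{n-1}/G_n$ already lies inside $Q_n[r^n_{s(n)}]$. All of this mirrors the proof of \cite[Proposition 1.3]{ContiTerracini2025}, with Lemma \ref{lem:effe} replacing the finiteness of the residue field.
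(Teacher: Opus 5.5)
Your argument is correct and follows essentially the paper's route: the paper obtains this as an immediate consequence of Proposition \ref{teo:bound1}, as in \cite[Proposition 1.3]{ContiTerracini2025}, and your use of \eqref{eq:sen} to place $G_{n-1}/G_n$ inside $Q_n((n-1)e-c)$, with only boundedly many upper jumps of $Q_n$ above that level (these are also jumps of $G$, so the jump-counting proposition applies), is exactly the step that makes it immediate. The only slip is in the exceptional case: for the finitely many $n$ with $s(n)<j_0$ you cannot take an arbitrary $t_n\geq 1$, because the containment $Q_n[t_n]\supseteq G_{n-1}/G_n$ must still hold, so keep your original choice of $t_n$ there (only the bound on $e_n/t_n$ is asymptotic).
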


\section{Property (B) for towers of fields}\label{sect:towers}

Recall that we fixed an embedding $\Qbar \hookrightarrow \Qbar_p$. Given algebraic fields $\KK,\FF,\LL,...$, we denote by $K,F,L,...$ their topological closures in $\Qbar_p$.

Throughout this section, let $\KK$ be a number field and $v$ the place of $\KK$ determined by the embedding $\Qbar \hookrightarrow \Qbar_p$.

\begin{assumptions}\label{ass:tower}
Given a tower of algebraic extensions
$$\KK\subseteq \FF\subseteq \LL_0\subseteq \LL_1\subseteq \ldots \subseteq \LL=\bigcup_n \LL_n,$$
consider the following conditions: 
%\andr{Maybe the names of the assumptions should be different here and in Ass. \ref{ass:composto}?For instance GAL1 and GAL2?}\lt{GAL1 e GAL2 mi suona terribile, magari fare maiuscolo e minuscolo?} \notaandr{Sì, buona idea, modifico}
\begin{description}
\item[\rm{(gal)}] $\LL_n/\KK$ is Galois for every $n$.
\item[\rm{(lie)}]\label{ass1:Lie} $\GG=\Gal(\LL/\FF)$ is a $p$-adic Lie group of positive dimension, and $\GG_n=\Gal(\LL/\LL_n)$ is a Lie filtration of $\GG$.
\item[\rm{(dvf)}]\label{ass1:inerzia finita} $F$ is a discrete valuation field (i.e. the ramification index $e(F/K)$ is finite).
\item[{\rm{(b)}}] $\LL_0$ has property \B. %\andr{Adesso sono confuso se qui serva strong B?}\lt{Secondo me qui serve che $L_0$ abbia la proprietà \B (semplice) e lo scriverei esplicitamente. La strong B serve a mio parere nel risultato sulla torre}
\item[\rm{(ltr)}]\label{ass1:LPTR} The local extension $L/L_0$ is totally ramified. 
\item[\rm{(nc)}] \label{ass1:NC} Put $G_n=\GG_n\cap\Gal(L/F)$. For every $n$, the normal closure of $G_n/G_{n+1}$ in $\Gal(\LL_{n+1}/\KK)$ %\pietro{non dovrebbe essere $\Gal(\LL_{n+1}/\KK)$?}
is $\GG_n/\GG_{n+1}$.
\item[\rm{(ce)}] $\LL/\KK$ has the \emph{central element property} at $v$: there exists $\tau \in Z(\Gal(\LL/\KK))$,  such that  \begin{equation}\hbox{for every  } \zeta\in\mug_{p^\infty}(\LL), \quad\tau(\zeta)=\zeta^g\quad\hbox{with } g\in\ZZ,\  g>1.\end{equation}
\end{description}
\end{assumptions}

\begin{theorem}\label{thm:Btower}
If Assumptions \ref{ass:tower} $\mathrm{(gal,lie,dvf,ltr,nc)}$ hold, then the set 
\begin{equation}\label{eq:Z-tower} \mathcal{Z}\coloneqq  \bigcup_{n\ge 1}\{\alpha\in\LL_n \ |\ \exists \sigma\in \GG_{n-1} \hbox{ such that } \sigma(\alpha)/\alpha\not\in\mug_{p^\infty}\} \end{equation} 
has property \B. 

\noindent If moreover $\mathrm{(b,ce)}$ hold, then $\LL$ has property \B.
\end{theorem}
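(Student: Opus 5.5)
The plan is to run Habegger's metric-inequality argument in the form of \cite{ContiTerracini2025}, with Proposition~\ref{prop:indexbound} as the local input. Here it is applied to the totally ramified $p$-adic Lie extension $L/L_0$, or rather to $L/F$ restricted to the subgroup $\Gal(L/F)\cap\GG$; by (ltr) and (dvf) the hypotheses of Section~\ref{sec:sen} hold. The finiteness of $e(F/\QQ_p)$ is what lets Sen's theorem \eqref{eq:sen} and Proposition~\ref{prop:indexbound} apply.

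\textbf{Step 1: the first assertion.} Fix $n\ge1$ and $\alpha\in\LL_n$. Suppose there is $\sigma\in\GG_{n-1}$ with $\sigma(\alpha)/\alpha\notin\mug_{p^\infty}$. By (nc), $\GG_{n-1}/\GG_n$ is the normal closure of the local quotient $G_{n-1}/G_n$ in $\Gal(\LL_n/\KK)$. This group is Galois over $\KK$ by (gal), and the set $\{\sigma:\sigma(\alpha)/\alpha\in\mug_{p^\infty}\}$ is stable under conjugation up to replacing $\alpha$ by a Galois conjugate. Hence we may pick a conjugate $\alpha'$ of $\alpha$, which has the same height, and some $\sigma\in G_{n-1}/G_n$, a \emph{local} element, with $\gamma=\sigma(\alpha')/\alpha'\notin\mug_{p^\infty}$. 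I expect this reduction to be delicate: the bad set must be handled as a union over cosets, and one must check that a conjugating element moving $\sigma$ into the decomposition group exists.

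\textbf{Step 2: local congruence.} By Proposition~\ref{prop:indexbound}, $\sigma\in Q_n[t_n]$, so $|\sigma(x)-x|_v\le |\pi_{L_n}|^{t_n+1}$ for $x$ integral. For the $p$-power map we then get
\[
|\sigma(\alpha')^{p^m}-\alpha'^{p^m}|_v\le p^{-c\,t_n/e_n}\cdot(\text{gain from raising to }p\text{-th powers}),
\]
with $t_n/e_n$ bounded below by Proposition~\ref{prop:indexbound}. Following Habegger, I choose $m$ bounded in terms of that constant so that, at all places above $v$ (using Galois equivariance to cover all of them), the quantity $\beta=\sigma(\alpha')^{p^m}-\alpha'^{p^m}$ is $v$-adically small by a fixed factor $p^{-\kappa}$ compared with the trivial bound. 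Nonintegral $\alpha'$ is handled by the usual splitting into $\max\{1,|\cdot|\}$ contributions.

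\textbf{Step 3: height bound.} The element $\beta$ is nonzero, since $\gamma^{p^m}\ne1$ as $\gamma$ is not a $p$-power root of unity. The product formula applied to $\beta$, with the archimedean and other places bounded trivially by $2p^m h(\alpha)+\log2$, then gives $h(\alpha)\ge c'>0$ uniformly in $n$. This proves \B for $\mathcal Z$.

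\textbf{Step 4: the second assertion.} Let $\alpha\in\LL$ not be a root of unity, and take $n$ minimal with $\alpha\in\LL_n$. If $n=0$, use (b). If $\alpha\in\mathcal Z$, we are done by Step 3. Otherwise every $\sigma\in\GG_{n-1}$ satisfies $\sigma(\alpha)/\alpha\in\mug_{p^\infty}$. Here I use (ce) as in \cite{AmorosoTerracini2024}. Raising $\alpha$ to a bounded $p$-power (bounded via the $p$-power roots of unity that occur, controlled by $\tau$) and comparing $\tau(\alpha)$ with $\alpha^g$, the element $\tau(\alpha)/\alpha^g$ either vanishes in the relevant sense, which forces $\alpha$ into the lower level $\LL_{n-1}$ up to roots of unity and lets us induct with $h$ scaling by bounded factors, or it is non-torsion with height at most $(g+1)h(\alpha)$ and lies in $\mathcal Z$ or in $\LL_0$. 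In both cases this gives a uniform lower bound. The main obstacle is making this descent uniform in $n$, i.e.\ ensuring that the exponent needed to kill $\sigma(\alpha)/\alpha$ stays bounded. I would first try to bound it by $|\mug_{p^\infty}(\LL_n)\cap\ker|$ using that $\tau$ acts as $g$ on these roots.
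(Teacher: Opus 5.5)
Your plan for the first part matches the paper's: reduce via (nc) to a local $\sigma$, combine Proposition~\ref{prop:indexbound} with \cite[Lemma 2.1]{AmorosoDavidZannier2014}, then apply the product formula to $\sigma(\alpha)^{p^m}-\alpha^{p^m}$. However, Step~2 rests on a false claim, and that claim hides the main combinatorial ingredient.

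\textbf{The metric inequality does not hold at all places above $v$.} Galois equivariance does not give the congruence at every place above $v$. For $w=\nu(v)$ one has $\lvert\sigma(\gamma)-\gamma\rvert_w=\lvert(\nu^{-1}\sigma\nu)(\nu^{-1}\gamma)-\nu^{-1}\gamma\rvert_v$. So $w$ is controlled only when $\nu^{-1}\sigma\nu$ is again a local element lying deep in the ramification filtration at $v$. Normality puts $\nu^{-1}\sigma\nu$ in $\GG_{n-1}/\GG_n$, but in general not in the decomposition group at $v$, and then nothing is small at $w$.

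Since the product formula needs one fixed $\sigma$, you only get the inequality on some set $S$ of places above $v$. The real issue is then a lower bound, uniform in $n$, for $\sum_{w\in S}[\MM_w:\QQ_p]/[\MM:\QQ]$; for $S=\{v\}$ this tends to $0$. Your Step~3, which treats all places above $v$ as good, is therefore unjustified.

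The paper handles this as follows.
\begin{itemize}
\item It passes to the Galois closure $\MM\subseteq\LL_n$ of $\KK(\alpha)$. This is needed anyway, since $\LL_n/\KK$ may be infinite.
\item It takes $S$ to be the orbit of $v$ under the centralizer $C(\tilde\sigma)$ of $\tilde\sigma=\sigma|_{\MM}$ in $\Gal(\MM/\KK)$.
\item The conjugacy class of $\tilde\sigma$ lies in the normal subgroup $\Gal(\MM/\MM\cap\LL_{n-1})$, of order at most $[\LL_n:\LL_{n-1}]\le p^A$. Hence $\lvert C(\tilde\sigma)\rvert\ge[\MM:\KK]/p^A$.
\item By orbit--stabilizer, the proportion of places in $S$ is at least $[K:\QQ_p]/(p^A[\KK:\QQ])$.
\end{itemize}
Your exponent $m$ must then be chosen so that the gain at places of $S$ is $p^{-p^{T+1}}$, with $p^T\ge p^A[\KK:\QQ]/[K:\QQ_p]$. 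A fixed factor $p^{-\kappa}$ tied only to the constant of Proposition~\ref{prop:indexbound} is not enough. With this choice one gets $2p^m h(\alpha)\ge\log(p^p/2)$.

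In Step~1, the clean justification is that $\{\sigma:\sigma(\alpha)/\alpha\in\mug_{p^\infty}\}$ is a subgroup containing $\GG_n$. If it contained all conjugates of $G_{n-1}$, it would contain $\GG_{n-1}$ by (nc). No conjugation into the decomposition group is needed.

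\textbf{Step~4 does not give a uniform bound.} Descending level by level with a bounded loss at each step gives a bound that decays geometrically in $n$. Your ``vanishing'' alternative never occurs, since $h(\tau(\alpha)/\alpha^g)\ge(g-1)h(\alpha)>0$ for non-torsion $\alpha$. Moreover, with $n$ the smallest level containing $\alpha$, the element $\beta=\tau(\alpha)/\alpha^g$ lies in $\LL_{n-1}$, but nothing you say puts it in $\mathcal Z\cup\LL_0$.

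The missing idea is to choose the level by the action rather than by membership. Let $r$ be minimal such that $\GG_r$ acts on $\alpha$ through $\mug_{p^\infty}$.
\begin{itemize}
\item Centrality of $\tau$ and (ce) give $\sigma(\beta)=\beta$ for $\sigma\in\GG_r$, so $\beta\in\LL_r$.
\item If $r=0$, conclude by (b).
\item If $r\ge1$, some $\sigma\in\GG_{r-1}$ has $\gamma=\sigma(\alpha)/\alpha\notin\mug_{p^\infty}$. Then $\sigma(\beta)/\beta=\tau(\gamma)/\gamma^g$, which is non-torsion whenever $\gamma$ is. So $\beta\in\mathcal Z$, and $(g+1)h(\alpha)\ge h(\beta)$ gives the bound in a single step.
\end{itemize}
When all these $\gamma$ are roots of unity, a small extra adjustment is needed, for example multiplying $\alpha$ by a suitable root of unity of order prime to $p$. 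The paper obtains this one-step reduction by quoting its strengthening of \cite[Proposition 5.4]{Piras2026}.
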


\begin{proof} 
 Let $\alpha\in\mathcal{Z}\setminus(\mug\cup\{0\})$. Then there exists a unique level $n\geq 1$ such that $\alpha\in\LL_n$ and $\sigma(\alpha)/\alpha\not\in\mug_{p^\infty}$ for some   $\sigma\in \GG_{n-1}$. 
By the minimality of $n$, we have $\alpha\not\in \LL_{n-1}$.  
We claim that (nc) implies the existence of $\sigma\in  G_{n-1}$ and $\eta\in\Gal(\LL/\KK)$ such that
\[
\frac{\sigma\eta(\alpha)}{\eta(\alpha)}\not\in \mug_{p^\infty}.
\]
Otherwise for every $\sigma\in  G_{n-1}$ and $\eta\in\Gal(\LL/\KK)$, $\eta^{-1}\sigma\eta$ would fix a $p$-power of $\alpha$, so that, by  (nc), every element of  $\GG_{n-1}$ would fix a suitable $p$-power of $\alpha$, in contradiction with the hypothesis that $\alpha\in\mathcal{Z}$. Therefore, up to replacing $\alpha$ by $\eta(\alpha)$ for a suitable $\eta\in\Gal(\LL/\KK)$, which does not affect the height, we can find $\sigma\in G_{n-1}$ that satisfies
\begin{equation}
\label{eq:tecnica}
\sigma(\alpha)^{p^\lambda}\neq\alpha^{p^\lambda} \quad \text{for any } p\text{-power } p^\lambda.
\end{equation}
By $\mathrm{(ltr,dvf)}$ and Proposition \ref{prop:indexbound}, there exists an integer $t_n$ with the property that $\Gal(L_n/L_{n-1}) \subseteq \Gal(L_n/L_{0})[t_n]$ and $\frac{t_n}{e(L_n/L_0)}$ is bounded from below by a non-zero positive constant for $n\to\infty$. By Assumption (dvf), and since $\LL_0/\F$ is finite by (lie),  there exists a constant $C'>0$ (independent of $n$) such that
\begin{equation*}\label{eq:seninproof}  \frac{t_n+1}{e(L_n/\QQ_p)}\geq C'. \end{equation*}
Since $\sigma\in \Gal(L_n/L_{0})[t_n]$, for any $\gamma\in\calo_ {\LL_n}$ we have
$$
\vert\sigma(\gamma)-\gamma\vert_{v}\leq p^{-(t_n+1)/{ e(L_n/\QQ_p)}}\leq p^{-C'}.
$$
where $v$ is the place of $\LL$ corresponding to the embedding $\LL\hookrightarrow L$. 
By~\cite[Lemma 2.1]{AmorosoDavidZannier2014} (note that this lemma holds for any extension of $\QQ_p$, not only for number fields), there exists a positive integer $\lambda$ which is explicitly bounded in terms of $p$,  and $C'$, such that
\begin{equation}
\label{metric1}
\vert\sigma(\gamma^{p^\lambda})-\gamma^{p^\lambda}\vert_v\leq p^{-{p^{T+1}}},
\end{equation}
for every $\gamma\in\calo_ {\LL_n}$, where $p^T\geq 
 \frac {p^A [\KK:\QQ]}{[K:\QQ_p]}$ and $A=\dim(\GG)$.  \\
By (gal) we can choose a finite Galois extension $\MM/\KK$ that contains $\alpha$ and such that $\MM\subseteq \LL_n$; for example, take $\MM$ as the Galois closure of $\KK(\alpha)$.
Then $[\MM:\MM\cap \LL_{n-1}]=[\MM\LL_{n-1}:\LL_{n-1}]\leq [\LL_n:\LL_{n-1}]=p^A$.  %\footnote{if $\LL_{n}=\LL_{n-1}(\beta)$ it suffices to take  $\MM$ as the field generated over $\KK$ by $\beta$, the coefficients of the minimal polynomial of $\beta$ over $\LL_{n-1}$, and the coefficients $a_i\in\LL_{n-1}$ such that $\alpha=\sum_ia_i\beta^i$.} Let $\tilde\sigma$ be the restriction of $\sigma$ to $\MM$, $\tilde v$ be the restriction of $v$.
\begin{figure}[h]
    \centering
    \begin{tikzcd}
    	{} & {\mathbb{L}_n} & \\
    	{} & {\mathbb{M}\mathbb{L}_{n-1}} & {} \\
    	{\mathbb{L}_{n-1}} & {} & {\mathbb{M}} \\
    	& {\mathbb{L}_{n-1}\cap \mathbb{M}}
    	\arrow[no head, from=1-2, to=2-2]
    	\arrow["{p^A}"',  no head, from=1-2, to=3-1]
        \arrow[no head, from=1-2, to=3-3]
    	\arrow[no head, from=2-2, to=3-1]
    	\arrow[no head, from=2-2, to=3-3]
    	\arrow[no head, from=3-1, to=4-2]
    	\arrow[no head, from=3-3, to=4-2]
    \end{tikzcd}
\end{figure}\\
We consider the action of $\Gal(\MM/\KK)$ on itself by conjugation and the stabilizer $$C(\tilde\sigma)=\{\nu\in\Gal(\MM/\KK)\;\vert\; \nu\tilde\sigma\nu^{-1}=\tilde\sigma\},$$
 where $\tilde\sigma$ is the restriction of $\sigma$ to $\MM$. Let $\nu\in C(\tilde\sigma)$. We use~\eqref{metric1} with $\nu^{-1}\gamma$ instead of $\gamma$. Since $\tilde\sigma\nu^{-1}=\nu^{-1}\tilde\sigma$ and $\vert\nu^{-1}(\star)\vert_{\tilde v}=\vert\star\vert_{\nu(\tilde v)}$ for the restriction $\tilde v$ of $v$ to $\MM$, we get, for every $\gamma\in\calo_\MM$,  
$$
\vert \sigma(\gamma^{p^\lambda})-\gamma^{p^\lambda}\vert_w\leq p^{-p^{T+1}}
$$
with $w=\nu(\tilde v)$ and thus for any $w$ in the orbit $S$ of $\tilde v$ under the the action of $C(\tilde \sigma)$ on the places of $\MM$. 
Using~\cite[Lemma 1]{AmorosoDvornicich2000} as in the proof of~\cite[Lemma 2.1]{AmorosoDavidZannier2014} we find
$$
\vert\sigma(\alpha^{p^\lambda})-\alpha^{p^\lambda}\vert_w
\leq c(w)\max(1,\vert\sigma(\alpha)\vert_w)^{p^\lambda}\max(1,\vert\alpha\vert_w)^{p^\lambda},\; \forall w\in S
$$
with $c(w)=p^{-p^{T+1}}$. This last inequality also holds for an arbitrary place $w$ of $\MM$ not in $S$, with 
$$
c(w)=
\begin{cases}
1,& \hbox{if } w\nmid\infty;\;\\
2,& \hbox{if } w\mid \infty.
\end{cases}
$$
Let $d= [\MM:\Q]$ and $d_{w}=[\MM_w:\QQ_{\lvert w\rvert}]$ for every place $w$ of $\MM$ above a rational place $\lvert w\rvert$. Using the product formula on $\sigma(\alpha)^{p^\lambda}-\alpha^{p^\lambda}$, which is $\neq0$ by~\eqref{eq:tecnica}, as in the proof of~\cite[Lemma 2.1]{AmorosoDavidZannier2014} we get: 
\begin{align*}
0 & = \sum_w\frac{d_{w}}{d}\log\vert \sigma(\alpha^{p^\lambda})-\alpha^{p^\lambda}\vert_w\\
 & \leq \sum_w\frac{d_{w}}{d}\left(\log c(w)
 +p^\lambda\log\max\{1,\vert\sigma(\alpha)\vert_w\}+p^\lambda\log\max\{1,\vert\alpha\vert_w\}\right)\\
& = \left(\sum_{w\mid\infty}\frac{d_{w}}{d}\right)\log 2
-\left(\sum_{w\in S}\frac{d_{w}}{d}\right)p^{T+1}\log p + 2p^\lambda h(\alpha).
\end{align*}
Since $\sum_{w\mid\infty}\frac{d_{w}}{d}=1$ and since $d_{w}=d_{\tilde v}$ for any place  $w\in S$, we get
\begin{equation}
\label{first-bound}
2p^\lambda h(\alpha)\geq \frac{d_{\tilde v}}{d}\vert S\vert\cdot p^{T+1}\log p-\log 2.
\end{equation}
To conclude, we need a lower bound for $S$. Since $\sigma\in \Gal(\LL_n/\LL_{n-1}) \lhd  \Gal(\LL_n/\KK)$ and $\tilde \sigma\in\Gal(\MM/\MM\cap\LL_{n-1})\lhd \Gal(\MM/\KK)$, the orbit $O$ of $\tilde \sigma$ by conjugation is contained in $\Gal(\MM/\MM\cap \LL_{n-1})$, so that
%, and $\vert H\vert\leq p^4$. Since $H$ is normal in $\Gal(\F(N)/\Q)$, the orbit $O$ is contained in $H$. Thus
$$
\vert C(\tilde\sigma)\vert =\frac{[\MM:\KK]}{\vert O\vert} \geq \frac{[\MM:\KK]}{[\MM:\MM\cap\LL_{n-1}]}\geq \frac{[\MM:\KK]}{[\LL_n:\LL_{n-1}]}=\frac {[\MM:\KK]}{p^{A}}.
$$
The stabilizer of $\tilde v$ under the action of $\Gal(\MM/\KK)$ is by definition the decomposition subgroup of $\Gal(\MM/\KK)$ at $\tilde v$, which is isomorphic to the local Galois group $\Gal(M/K)$; its cardinality is $[M:K]$.   Therefore,
$$
\vert S\vert 
=\frac{\vert C(\tilde\sigma)\vert}{\vert \mathrm{Stab}(v)\cap C(\tilde\sigma)\vert}
\geq \frac{[\MM:\KK]}{p^A[M:K]}=\frac{[K:\QQ_p]}{p^A[\KK:\QQ]}\frac {d}{d_{\tilde v}}\geq \frac 1 {p^T}\frac {d}{d_{\tilde v}}.
$$
Finally, from~\eqref{first-bound} we obtain %$h(\alpha)\geq c$ with
\begin{align}\label{eq:lambda}
 h(\alpha)\geq \frac{\log(p^p/2)}{2p^{\lambda}}>0, &   
\end{align}
hence the first part of the theorem.
\begin{comment}
{\color{violet} Assume now (b) and (ce):   and let $\alpha\in \LL^\times\setminus (\LL_0^\times\cup\mug$); exactly as in \cite[Theorem 2.9]{ContiTerracini2025}, we can consider the element $\beta=\tau(\alpha)/\alpha^g$ and prove that either $\beta\in \LL_0$ or $\beta\in\mathcal{Z}$. Since $h(\beta)\leq (g+1)h(\alpha)$, we find a lower bound for the height over $\LL^\times\setminus\{ \LL_0^\times\cup \mug_\infty$\} . Taking also into account assumption (b), we deduce property \B for $\LL$. }

\andr{If one also assumes $\mathrm{(b,ce)}$, then one deduces that $\LL$ has \B as in \cite[Theorem 2.9]{ContiTerracini2025}, replacing $\KK$ with $\LL_0$ there: note that the proof does not use the fact that $\KK$ is a number field, but only that it has \B, and in our case this holds for $\LL_0$ by assumption. (I'm not sure: should we rewrite the proof? It's exactly the same)}
\pietro{Possiamo al più richiamare la \cite[Proposition 5.4]{Piras2026} con l'osservazione che ha detto Andrea:\\
\end{comment}

For the latter part of the theorem we use an improvement of \cite[Proposition 5.4]{Piras2026}. 
\begin{proposition}
    Let $\LL/\KK$ be a Galois extension with $\mathrm{(b)}$ and $\mathrm{(ce)}$ and suppose that $\KK$ has (B). For each nonzero $\alpha\in\LL$ that is not a root of unity, there exists an element $\beta\in\mathcal{Z}\setminus(\mug\cup\{0\})$ such that $h(\alpha)\geq h(\beta)$.
\end{proposition}
\noindent Note that in the original formulation one assumes that $\KK$ is a number field but this is redundant; indeed by examining the proof one sees that the only requirement needed on $\KK$ is that it has (B). Since under our hypotheses $\LL_0$ has (B), so does $\KK$ and the theorem is proved.
%\andr{We recall the standard argument showing that $\LL$ has \B if one also assumes $\mathrm{(b,ce)}$ (see \cite[Theorem 2.9]{ContiTerracini2025}). Let $\alpha\in\LL^\times\seminus\mug$. Since $\LL_0$ has (b) by assumption, it is enough to find a positive lower bound of $h(\alpha)$ for $\alpha\in\LL\setminus\LL^\times$. Let $\tau\in\Gal(\LL/\KK)$ be as in condition (ce), and let $\beta=\tau(\alpha)/\alpha^g$. Then $h(\beta)=(g-1)h(\alpha)$, and the fact in \cite[Theorem 2.9]{ContiTerracini2025} shows that either...}
\end{proof}

\begin{remark}%\notapietro{Unire al remark successivo?}\notalea{No, lo lascerei come remark a parte, mi sembra di natura diversa}
    The bound \eqref{eq:lambda} works for \emph{all} primes $p$. If one supposes $p>2$ then one can take $\lambda$ such that 
    \begin{equation*}
        \vert \sigma(\gamma^{p^\lambda})-\gamma^{p^\lambda}\vert_w\leq p^{-p^{T}}
    \end{equation*}
    instead of \eqref{metric1}. Running the same argument one obtains the bound
    \begin{equation*}
        h(\alpha)\geq\frac{\log(p/2)}{2p^\lambda}>0
    \end{equation*}
    which is not trivial since $p>2$. 
\end{remark}
\begin{remark}\phantom{a}
\begin{enumerate}
    \item Note that Assumption \ref{ass:tower} (gal) depends on the choice of Lie filtration $\Gal(\LL/\LL_n)$ of the $p$-adic Lie group $\Gal(\LL/\FF)$. In our applications, there is a favourite choice of filtration that also satisfies (gal).
    \item Thanks to Assumption \ref{ass:tower} (gal) for $n=0$, the Galois closure of $\FF/\KK$ is contained in $\LL_0$, hence finite over $\FF$, so that in Theorem \ref{thm:Btower} we can always replace $\FF$ with a finite extension that is Galois over $\KK$.
\end{enumerate}
\end{remark}

\section{Property \B for the compositum of fields} \label{sect:compositum}
In this section, we adapt the results of the previous section to the setting of the compositum of two Galois extensions of a number field: one with finite ramification above a fixed prime and satisfying a lower bound for the height, and the other a potentially totally ramified $p$-adic Lie extension.

As in the previous section, we denote with $K,L,F,...$ the closures in $\ovl\Q_p$ of algebraic extensions $\KK,\LL,\FF,...$ of $\QQ$, via our fixed embedding $\ovl\Q\into\ovl\Q_p$. 

Throughout this section, let $\KK$ be a number field and $v$ the place of $\KK$ determined by the embedding $\ovl\Q\into\ovl\Q_p$. 

\begin{assumptions}
    \label{ass:composto} Let $\FF/\KK$ be a Galois extension, and 
$$\KK\subseteq \LL_0\subseteq \LL_1\subseteq \ldots \subseteq \LL=\bigcup_n \LL_n,$$
a tower of algebraic extensions. We introduce the following conditions:
\begin{description}
% \item[\rm{(GAL)}] All of the $\LL_n$, and $\LL$, are Galois over $\KK$\notalea{qui non credo sia necessario, basta l'ipotesi LIE}\notaandr{giusto}
\item[\rm{(LIE)}]\label{ass2:Lie} $\GG=\Gal(\LL/\KK)$ is a $p$-adic Lie group of positive dimension, and $\GG_n=\Gal(\LL/\LL_n)$ is a Lie filtration of $\GG$.
\item[\rm{(DVF)}]\label{ass2:inerzia finita} $F$ is a discrete valuation field.
\item[\rm{(SB)}]\label{ass2:ADZ} $\FF/\KK$ has the strong property \B. 
\item[\rm{(LTR)}]\label{ass2:LPTR}  $L/L_0$ is totally ramified. %\lt{forse la chiamerei (LTR), visto che $L_0$ è fissato}\andr{OK, ho modificato, magari da ricontrollare in generale perché ci riferiamo a volte a questa LTR e a volte a LPTR di \ref{ass:CT}}
\item[\rm{(NC)}] \label{ass2:NC} Put $G_n=\GG_n\cap\Gal(L/K)$. For every $n$, the normal closure of $G_n/G_{n+1}$ in $\GG/\GG_{n+1}$ is $\GG_n/\GG_{n+1}$.
% \item[\rm{(CE)}]\label{ass2:CE} $\LL/\KK$ has the \emph{inertial central element property} at $v$: there exists $\tau \in Z(\GG)\cap I(L/K)$,  such that  \begin{equation}\label{eq:H1inertial}\hbox{for every  } \zeta\in\mug_{p^\infty}(\LL), \quad\tau(\zeta)=\zeta^g\quad\hbox{with } g\in\ZZ,\  g>1.\end{equation}
\item[\rm{(CE)}]\label{ass2:CE} $\LL/\KK$ has the \emph{inertial central element property} at $v$: there exists $\tau \in I(\Qbar_p/K)$,  such that $\tau|_\LL\in Z(\GG)$ and \begin{equation}\label{eq:H1inertial}\hbox{for every  } \zeta\in\mug_{p^\infty}, \quad\tau(\zeta)=\zeta^g\quad\hbox{with } g\in\ZZ,\  g>1.\end{equation}
\end{description} 
\end{assumptions}

\begin{theorem}\label{teo:composto} Let $\FF/\KK$ and $\KK\subset\LL_0\subset\LL_1\subset\ldots\subset\LL$ be extensions satisfying Assumptions \ref{ass:composto} $\mathrm{(LIE,DVF,LTR,NC)}$. 
Put $\widetilde \LL=\LL\FF$. Then:
\begin{itemize}
\item[(i)] For a suitable $n_0\geq 0$, the sequence  
\[
\KK\subseteq \FF\subseteq \widetilde\LL_0\subseteq \widetilde\LL_1
\subseteq \cdots \subseteq \widetilde{\LL}
\]
obtained by  setting $\widetilde \LL_i=\FF\LL_{n_0+i}$,
satisfies Assumptions
\ref{ass:tower} \(\mathrm{(gal,lie,dvf,ltr,nc)}\).
\item[(ii)] If in addition $\LL/\KK$ satisfies  Assumption \ref{ass:composto} $\mathrm{(SB)}$ (resp. $\mathrm{(CE)}$), then $\widetilde \LL/\KK$ satisfies Assumption \ref{ass:tower} $\mathrm{(b)}$ (resp.  $\mathrm{(ce)}$).
\end{itemize}
\end{theorem}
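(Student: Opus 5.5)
The strategy is to compare the tower $\LL_n$ over $\KK$ with the tower $\FF\LL_n$ over $\FF$ through the restriction map. We set $\mathbb H=\Gal(\LL/\LL\cap\FF)$, which is an open, hence closed, normal subgroup of $\GG$. Since $\FF/\KK$ and $\LL/\KK$ are both Galois, restriction gives an isomorphism
\[
\Gal(\widetilde\LL/\FF)\xrightarrow{\ \sim\ }\Gal(\LL/\LL\cap\FF)=\mathbb H .
\]
Openness comes from the following argument: $\LL\cap\FF$ is Galois over $\KK$ and its closure lies in $F$, which is finitely ramified. By (LTR), $L/L_0$ is totally ramified, so the inertia of $L/K$ is open in $\Gal(L/K)$, and the intersection with $\FF$ can only absorb a finite part of it. I expect this step to be the main obstacle. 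The cleanest route I see is to note that $L\cap F$ is a totally ramified extension of $L_0\cap F$ inside a field of finite ramification index, so it is finite over $L_0$. Then (NC) propagates local finiteness to global finiteness: if $\LL\cap\FF$ contained infinitely many layers, then some $G_n/G_{n+1}$ would act trivially on it for large $n$. Because the normal closure of $G_n/G_{n+1}$ is all of $\GG_n/\GG_{n+1}$, and $\LL\cap\FF$ is Galois over $\KK$, the whole of $\GG_n$ would fix $\LL_{n+1}\cap\FF$. This forces $\LL\cap\FF\subseteq\LL_{n}$ for some $n$. We choose $n_0$ to be this $n$.

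For part (i), we set $\widetilde\LL_i=\FF\LL_{n_0+i}$.

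(gal): the field $\widetilde\LL_i$ is the compositum of two Galois extensions of $\KK$, hence Galois over $\KK$.

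(lie): restriction identifies $\Gal(\widetilde\LL/\widetilde\LL_i)$ with $\GG_{n_0+i}\subseteq\mathbb H$, since $\LL\cap\FF\subseteq\LL_{n_0}$. A shifted Lie filtration of an open subgroup is again a Lie filtration, and $\dim\mathbb H=\dim\GG>0$.

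(dvf): this is exactly (DVF).

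(ltr): the local extension $\widetilde L/\widetilde L_0$ is $LF/L_{n_0}F$. Its Galois group injects into $\Gal(L/L_{n_0})$, which is inertia because $L/L_0$ is totally ramified. Enlarging the base by the compositum preserves total ramification of the Galois extension $L/L_{n_0}$: the inertia group of $LF/L_{n_0}F$ surjects onto the image of $\Gal(\overline{\QQ}_p/L_{n_0}F)$, because residue field extensions stay trivial. To be safe, I would argue with groups: $\Gal(\widetilde L/\widetilde L_0)$ is identified with a subgroup of $\Gal(L/L_{n_0})$ via restriction, and inertia maps onto inertia.

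(nc): under the restriction isomorphisms, $\Gal(\widetilde\LL_{i+1}/\KK)$ maps onto $\Gal(\LL_{n_0+i+1}/\KK)$. Taking conjugates by elements of $\Gal(\widetilde\LL/\KK)$ amounts to taking conjugates by elements of $\GG$. So the normal closure of $\widetilde G_i/\widetilde G_{i+1}$ equals, via restriction, the normal closure of $G_{n_0+i}/G_{n_0+i+1}$ inside $\GG/\GG_{n_0+i+1}$, which is $\GG_{n_0+i}/\GG_{n_0+i+1}$ by (NC). Here the local groups $\widetilde G_i$ and $G_{n_0+i}$ must match, and they do because $F\cap L\subseteq L_{n_0}$ by the same finiteness argument.

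For part (ii), assume (SB). Then $\widetilde\LL_0=\FF\LL_{n_0}$ is a finite extension of $\FF$, so it has \B by the definition of the strong Bogomolov property; this is (b). Now assume (CE). We restrict $\tau$ to $\widetilde\LL$. Since $\tau|_\LL$ is central in $\GG$, we must show that $\tau|_{\widetilde\LL}$ is central in $\Gal(\widetilde\LL/\KK)$. If $\FF/\KK$ is abelian this is immediate. In general I would replace $\tau$ by a suitable power $\tau^m$ whose restriction to the finite part controlling the non-centrality is trivial: $\tau$ is inertial and the ramification of $F$ is finite, so a power of $\tau$ acts trivially on the maximal subextension of $F$ where the inertia action factors through a finite quotient. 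This power still satisfies $\tau^m(\zeta)=\zeta^{g^m}$ with $g^m>1$, which gives (ce). I expect the centrality claim to need the most care after the finiteness of $\LL\cap\FF$.
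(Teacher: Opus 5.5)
The gap is in your verification of (ltr). You fix $n_0$ only so that $\LL\cap\FF\subseteq\LL_{n_0}$, and then assert that $LF/L_{n_0}F$ is totally ramified. Your reasons are that ``enlarging the base by the compositum preserves total ramification'' and that ``inertia maps onto inertia''. Neither holds here. The restriction $\Gal(LF/L_{n_0}F)\hookrightarrow\Gal(L/L_{n_0})$ is a base-change injection, not a quotient map, and inertia need not fill its image.

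Here is a concrete counterexample, for $p$ odd. Take $\KK=\QQ$ and $\LL=\QQ(\mug_{p^\infty})$ with $\LL_0=\QQ$, $\LL_n=\QQ(\mug_{p^n})$. Take $\FF=\QQ(\mug_{p-1},\sqrt[p-1]{-pu})$, where $u$ is an integer that is a primitive root modulo $p$.
\begin{itemize}
\item All of (LIE), (DVF), (LTR), (NC) hold; (NC) is trivial because the decomposition group at $p$ is all of $\Gal(\LL/\QQ)$.
\item One has $F=\QQ_p(\sqrt[p-1]{-pu})$, $L\cap F=\QQ_p$ and $\LL\cap\FF=\QQ$, so your recipe gives $n_0=0$.
\item However $F(\zeta_p)=F(\sqrt[p-1]{u})$ is unramified of degree $p-1$ over $F$. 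Hence $\widetilde L/\widetilde L_0=LF/F$ is not totally ramified.
\end{itemize}

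The missing ingredient, which is how the paper argues, is a bound on the residue degree rather than a preservation statement. For finite subextensions $F'\subseteq F$, $L'\subseteq L$ of $K$ one has $f(F'L'/F')\le f(L'/K)\,e(F'/K)$. Hence $f(FL/F)\le f(L/K)\,e(F/K)<\infty$, by (LTR) (which gives $f(L/K)=f(L_0/K)$) and (DVF). So the residue field of $FL=\bigcup_n FL_n$ is already reached at a finite level, and you must enlarge $n_0$ past that level. This extra shift does not harm (gal), (lie) or (nc), since you still have $L\cap F\subseteq L_{n_0}$.

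The rest of your proposal is sound. Your proof that $\Gal(\LL/\LL\cap\FF)$ is open differs from the paper: you get local finiteness of $L\cap F$, then $\LL_{n+1}\cap\FF\subseteq\LL_n$ for large $n$ via (NC). The paper instead proves positive dimension by contradiction. Your route is a valid and somewhat cleaner alternative, and it also gives the global restriction isomorphisms directly.

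For (CE), your power-of-$\tau$ idea is right but should be stated plainly. The group $I(F/K)$ is finite, of order at most $e(F/K)$. So with $m=|I(F/K)|$ one has $\tau^m|_{\FF}=\id$. Then $\tau^m|_{\widetilde\LL}$ is central, because $\Gal(\widetilde\LL/\KK)$ embeds into $\Gal(\LL/\KK)\times\Gal(\FF/\KK)$.
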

\begin{proof}
We show $(i)$ point by point.
    \begin{description}
        \item[\rm{(gal)}] This follows from the fact that $\FF/\KK$ and  $\LL_n/\KK$ are Galois extensions for every $n$ by condition \rm{(LIE)} of Assumption \ref{ass:composto}.
        \item[\rm{(lie)}] We show that $\Gal(\widetilde \LL/\mathbb{F})$ is a $p$-adic Lie group. We have
        \begin{equation}\label{cartan}
    \Gal(\widetilde\LL/\mathbb{F})\xrightarrow{\sim} \Gal(\mathbb{L}/\mathbb{L}\cap\mathbb{F})\hookrightarrow \Gal(\mathbb{L}/\mathbb{K})
        \end{equation}
        so $\Gal(\widetilde\LL/\mathbb{F})$ is isomorphic to a closed subgroup of $\Gal(\mathbb{L}/\mathbb{K})$. By Cartan's theorem on closed subgroups \cite[Chapter V, §9]{SerreLie2006}, $\Gal(\mathbb{L}/\mathbb{L}\cap\mathbb{F})$ is a $p$-adic Lie subgroup of $\Gal(\mathbb{L}/\mathbb{K})$, so $\Gal(\widetilde \LL/\mathbb{F})$ has a unique structure of $p$-adic Lie group making the isomorphism of topological groups in \eqref{cartan} an isomorphism of $p$-adic Lie groups. \\ We show that the groups $\Gal(\wtl\LL/\wtl\LL_i)$ define a Lie filtration on $\Gal(\wtl\LL/\FF)$, by pulling back the Lie filtration on $\Gal(\LL/\KK)$ to $\Gal(\FF\LL/\FF)$ in the following way. Intersecting the Lie filtration with $\Gal(\LL/\LL\cap\FF)$, we obtain $\Gal(\LL/\FF\cap\LL)\cap\Gal(\LL/\LL_n)=\Gal(\LL/\LL_n(\FF\cap\LL))$. Passing to $\Gal(\FF\LL/\FF\cap\LL)$ we get $\Gal(\FF\LL/\LL_n(\FF\cap\LL))$, and finally taking the intersection with $\Gal(\FF\LL/\FF)$ gives
        \begin{equation*}
        \Gal(\FF\LL/\FF)\cap\Gal(\FF\LL/\LL_n(\FF\cap\LL))= \Gal(\FF\LL/\FF\LL_n(\FF\cap\LL))=\Gal(\FF\LL/\FF\LL_n).
        \end{equation*}
        Finally, assume by contradiction that $\Gal(\widetilde\LL/\mathbb{F})\simeq\Gal(\mathbb{L}/\mathbb{L}\cap \mathbb{F})$ has dimension 0, i.e. that $\mathbb{L}/\mathbb{L}\cap \mathbb{F}$ is finite. Since $e(\mathbb{F}/\mathbb{K})$ is finite, so is $e((\mathbb{L}\cap \mathbb{F})/\mathbb{K})$, so that $e(\mathbb{L}/\mathbb{K})$ is also finite. By assumption, $\LL/\LL_0$ is (LTR), so $I(L/K)$ is finite and open in the decomposition group. We deduce that $\Gal(L/K)$ is finite, hence $G_n=G_{n+1}$ for big enough $n$, which implies  $\mathbb{G}_n=\mathbb{G}_{n+1}$ by assumption \rm{(NC)}. This is only possible if $\Gal(\mathbb{L}/\mathbb{K})$ is finite, contradicting Assumption \ref{ass:composto} \rm(LIE). 
 \item[\rm{(dvf)}] This coincides with \rm{(DVF)} of Assumption \ref{ass:composto}.
        % We have to show that $\mathbb{FL}_0/\mathbb{K}$ has the strong property (B). 
        % Since $\mathbb{F}/\mathbb{K}$ has ADZ at $v$ and $\mathbb{L}_0/\mathbb{K}$ is finite (in particular, it has trivially ADZ everywhere), its compositum has again ADZ, hence the strong property (B), by Proposition \ref{prop:ADZB}. %\cite[Theorem 5.8]{Piras2026}.
        \item[\rm{(ltr)}] Up to shifting indices in the filtration $(\G_n)$, it is sufficient to show that $FL/L$ is PTR. For any finite extensions $F^\prime /K$ and $L^\prime/K$ contained in $F$ and $L$ respectively we have
        \begin{align*}
            f(F^\prime L^\prime/F^\prime)& =\frac{f(F^\prime L^\prime/K)}{f(F^\prime/K)}=\frac{f(F^\prime L^\prime/L^\prime)f(L'/K)e(F^\prime /K)}{[F^\prime:K]}\\
            &\leq f(L^\prime/K)e(F^\prime/K)\frac{[F^\prime L^\prime:L^\prime]}{[F^\prime:K]}\leq f(L^\prime/K)e(F^\prime/K).
        \end{align*}
        Taking the limit over all $F^\prime$ and $L^\prime$ as above, using that $f(L/K)$ is finite by hypotheses and $e(F/K)$ is finite by (DVR), we deduce that $f(FL/F)\leq f(L/K)e(F/K)$. In particular $FL/F$ is PTR.
        \item[\rm{(nc)}] Let $\tilde L=LF$, $\tilde L_n=L_nF$. By Assumptions \ref{ass:composto} $\mathrm{(DVF,LTR)}$, $L\cap F$ is finite. Therefore, $L\cap F\subseteq L_n$ for some $n\in\NN$. We rescale in such a way that $L\cap F\subseteq L_0$. Then \begin{equation}\label{eq:iso}
    \text{  $\forall n\in\NN$,  $\mathrm{Res}:\Gal(\widetilde L_{n+1}/\widetilde L_{n})\rightarrow \Gal( L_{n+1}/ L_{n})$ is an isomorphism.}
\end{equation}
Indeed, the image of the restriction is $\Gal(L_{n+1}/L_{n+1}\cap L_{n}F)\subseteq \Gal(L_{n+1}/L_{n})$, and by Dedekind's modular law
$$L_{n+1}\cap L_{n}F=(L_{n+1}\cap F)L_{n}.$$
But for $n\geq 0$, 
 $L_{n+1}\cap F=L_{n}\cap F$, so that
 $$(L_{n+1}\cap F)L_{n}=(L_{n}\cap F)L_{n}=L_{n}.$$
 Thanks to \eqref{eq:iso}, Assumption \ref{ass:composto} \rm{(NC)} implies \ref{ass:tower} \rm{(nc)}.%\lt{Con la condizione   3.1 (nc) indebolita chiedendo la chiusura normale in $\Gal(\LL_n/\KK)$ mi sembra che le cose tornino}
    \end{description}
%\sout{For part $(ii)$, since the inertia group $I(FL/K)$ restricts surjectively to  $I(L/K)$, we can lift the central element $\tau\in I(L/K)$ to an element in $I(FL/K)$ that again we call $\tau$. The restriction $\tau_{\mid F}$ belongs to $I(F/K)$, that has finite size $f$ by hypothesis, so $\tau_{\mid F}^f=\id_{\mid F}$. Since $\tau^f_{\mid L}\in Z(\Gal(L/K))$ we have that $\tau^f\in Z(\Gal(FL/F))$. Clearly $\tau^f$ belongs to $I(FL/F)$ and satisfies the condition on the roots of unity since $\tau$ does.}
 
For part $(ii)$: it is obvious that if $\FF/\KK$ satisfies \SB, then the finite extension $\widetilde\LL_n=\FF\LL_n$ of $\FF$ also satisfies the strong property \B, hence (b) holds for $\widetilde \LL_0$. Assume  now that (CE) holds; notice that the restriction $\tau_{\mid \FF}$ belongs to $I(F/K)$, that has finite size $f$ by hypothesis, so $\tau_{\mid \FF}^f=\id_{\mid \FF}$. Since $\tau^f_{\mid \LL}\in Z(\Gal(\LL/\KK))$ we have that $\tau^f\in Z(\Gal(\widetilde \LL/\KK))$. Moreover $\tau^f$ satisfies the condition on the roots of unity \eqref{eq:H1inertial} since $\tau$ does, so that $\widetilde \LL/\KK$ satisfies condition (ce) in Assumptions \ref{ass:tower}.
\end{proof}

We obtain the following:

\begin{corollary}\label{cor:composto2} Let $\FF/\KK$ and $\KK\subset\LL_0\subset\LL_1\subset\ldots\subset\LL$ be extensions satisfying Assumptions \ref{ass:composto} $\mathrm{(LIE,DVF,LTR,NC)}$. Let $n_0$ as in Theorem \ref{teo:composto}, and put
 $\widetilde \LL_n=\LL_{n_0+n}\FF$, $\GG_n=\Gal(\widetilde\LL/\widetilde \LL_n)$,
 $\widetilde \LL=\LL\FF$. Then:
\begin{itemize}
\item[(i)] The set 
\begin{equation*}
\widetilde{\mathcal{Z}}\coloneqq  \bigcup_{n\ge 1}\{\alpha\in\widetilde\LL_n \ |\ \exists \sigma\in \widetilde\GG_{n-1} \hbox{ such that } \sigma(\alpha)/\alpha\not\in\mug_{p^\infty}\} \end{equation*} 
has property \B. 
\item[(ii)] If in addition $\LL/\KK$ satisfies  Assumptions \ref{ass:composto} $\mathrm{(SB, CE)}$  then $\widetilde \LL$ has property \SB.
\end{itemize}
\end{corollary}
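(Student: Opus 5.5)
The plan is to combine Theorem \ref{teo:composto} with Theorem \ref{thm:Btower}.

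For part (i), Theorem \ref{teo:composto}(i) shows that the tower
\[
\KK\subseteq\FF\subseteq\widetilde\LL_0\subseteq\widetilde\LL_1\subseteq\cdots\subseteq\widetilde\LL
\]
satisfies Assumptions \ref{ass:tower} (gal,lie,dvf,ltr,nc). Here the filtration $\widetilde\GG_n=\Gal(\widetilde\LL/\widetilde\LL_n)$ is exactly the pulled-back Lie filtration constructed in the proof of (lie). The first part of Theorem \ref{thm:Btower} then applies verbatim to this tower. The set $\mathcal Z$ of \eqref{eq:Z-tower} attached to the tower is precisely $\widetilde{\mathcal Z}$, so $\widetilde{\mathcal Z}$ has \B.

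For part (ii), we need every finite extension $\MM$ of $\widetilde\LL$ to have \B.

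\emph{Reduction.} Since $\widetilde\LL$ is an algebraic extension of the number field $\KK$, we can write $\MM=\widetilde\LL\,\KK'$ with $\KK'/\KK$ finite. Replacing $\KK'$ by its Galois closure only enlarges $\MM$, and \B passes to subfields, so we may assume $\KK'/\KK$ is Galois. Then $\MM=(\FF\KK')(\LL\KK')$.

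\emph{Checking the assumptions over $\KK'$.} Let $v'$ be the place of $\KK'$ given by the fixed embedding. The aim is to verify Assumptions \ref{ass:composto} for $\FF\KK'/\KK'$ and the tower $\LL_n\KK'$:
\begin{itemize}
\item (LIE): $\Gal(\LL\KK'/\KK')\cong\Gal(\LL/\LL\cap\KK')$ is an open subgroup of $\GG$. The groups $\Gal(\LL\KK'/\LL_n\KK')$ are the intersection of the Lie filtration with this open subgroup, which is again a Lie filtration, as in the proof of (lie).
\item (DVF): $F K'$ is finite over $F$, so it has finite ramification index.
\item (SB): every finite extension of $\FF\KK'$ is a finite extension of $\FF$, so it has \B because $\FF/\KK$ has \SB.
\item (LTR): after shifting indices so that $K'\cap L\subseteq L_0$, the extension $LK'/L_0K'$ is totally ramified. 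This is the same modular-law argument as in the proof of (nc) in Theorem \ref{teo:composto}.
\item (NC): the successive quotients for the new tower are identified with the old ones by the restriction isomorphism \eqref{eq:iso}, so (NC) transfers, up to shifting $n$.
\item (CE): take $\tau'=\tau^{m}$ with $m=[K':K]!$, or with $m$ the order of $\tau|_{\KK'}$, which divides $|\Gal(\KK'/\KK)|$. Then $\tau'$ fixes $\KK'$, it still lies in inertia, its restriction to $\LL\KK'$ is central, and it acts on $\mug_{p^\infty}$ by $\zeta\mapsto\zeta^{g^m}$ with $g^m>1$.
\end{itemize}

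\emph{Conclusion.} With these assumptions over $\KK'$, Theorem \ref{teo:composto}(ii) gives (b) and (ce) for the tower $\FF\KK'\LL_{n_0+n}$. The second half of Theorem \ref{thm:Btower} then gives \B for $\MM$. Its proof uses only that the base $\KK'$ has \B, which holds because $\KK'$ is a number field.

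\emph{Main obstacle.} I expect the hardest step to be checking (NC) and (LTR) after base change to $\KK'$. The normal closure is now taken in a smaller group, $\Gal(\LL\KK'/\KK')$, which could a priori produce a smaller normal closure. My first attempt would be to observe that $\Gal(\LL\KK'/\KK')$ is open and normal in $\GG$, and that conjugation by the finite quotient permutes the local pieces. If that fails, I would instead rerun the proof of Theorem \ref{thm:Btower} directly for $\MM\subset\widetilde\LL\KK'$. That proof only needs conjugates by the big Galois group over $\KK$, and $\widetilde\LL\KK'/\KK$ is still Galois, so the orbit bound on $|S|$ changes only by the factor $[\KK':\KK]$.
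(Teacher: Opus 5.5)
Part (i) is exactly the paper's argument. In part (ii), however, the base change from $\KK$ to $\KK'$ breaks (NC). This is a genuine failure, not just a technical obstacle.

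Your bullet says (NC) transfers because the successive quotients are identified via \eqref{eq:iso}. But (NC) concerns the normal closure inside the global group, and over $\KK'$ that group shrinks to $\Gal(\LL\KK'/\KK')\cong\Gal(\LL/\LL\cap\KK')$. Your reduction puts no constraint on $\KK'\cap\LL$. For instance, $\MM=\widetilde\LL$ with $\KK'=\LL_m$ is allowed, and $\LL_m/\KK$ is already Galois.

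Take the congruence filtration of Section \ref{sec:B}. There $\GG_m$ ($m\ge 1$) consists of elements congruent to $1$ modulo $p$, so $[\GG_m,\GG_n]\subseteq\GG_{n+1}$ for large $n$. Hence $\Gal(\LL/\LL_m)$ acts trivially by conjugation on $\GG_n/\GG_{n+1}$, and the normal closure of $G_n/G_{n+1}$ is just its own image. (NC) over $\KK'$ would then force $G_n$ to surject onto $\GG_n/\GG_{n+1}$ for all large $n$. Since the local group is closed, it would then be open in $\GG$, which is (PTR).

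(PTR) fails in case (ind), which is exactly the case the paper's tower argument is built for. There, (NC) comes from conjugation by elements whose reductions modulo $p$ fill out $\SL_2(\FF_p)$ (Proposition \ref{prop:indNC}), and base change destroys precisely this. For the same reason your first repair, where the finite quotient permutes the local pieces, cannot work.

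The fix is to enlarge $\FF$ rather than $\KK$, as the paper does.
\begin{itemize}
\item Write $\MM=\LL\FF'$ with $\FF'=\FF\KK'$ finite over $\FF$.
\item Let $\FF''$ be the Galois closure of $\FF'$ over $\KK$; it is still finite over $\FF$ because $\FF/\KK$ is Galois.
\item $F''$ is finite over $F$, hence a discrete valuation field.
\item Every finite extension of $\FF''$ is finite over $\FF$, so $\FF''$ has \SB.
\end{itemize}
The conditions (LIE, LTR, NC, CE) concern only $\LL/\KK$ and are untouched. Theorems \ref{teo:composto} and \ref{thm:Btower} then give \B for $\LL\FF''\supseteq\MM$.

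Your second fallback, keeping $\KK$ as base and conjugating over $\KK$, points in this direction. Once $\KK'$ is absorbed into $\FF$, though, nothing needs to be rerun.
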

\begin{proof} Statement ($i$) and property \B in statement $(ii)$ follow by combining Theorems \ref{teo:composto} and \ref{thm:Btower}. For proving property \SB in ($ii$), notice that every finite extension of $\widetilde \LL$ can be written as $\LL\FF'$ for some finite extension $\FF'/\FF$. Since $\FF/\KK$ is Galois, the Galois closure $\F''$  of $\FF'$ over $\KK$ is finite over $\FF'$. Then  all of Assumption \ref{ass:composto} hold for $\LL/\KK$, $\FF''/\KK$ so that $\LL\FF''$ has $\B$. \end{proof}

\begin{comment}
\begin{corollary}
\label{cor:Bcomposto} 
Let $\LL/\KK$, $\FF/\KK$ be two Galois extensions satisfying all of Assumption \ref{ass:composto}. 
 
Then $\LL\FF/\KK$ has property \B.
\end{corollary} }
Actually, this result can be strengthened:
\begin{corollary}
\label{cor:SBcomposto} 
Let $\LL/\KK$, $\FF/\KK$ be two Galois extensions satisfying all of Assumption \ref{ass:composto}.  
Then $\LL\FF/\KK$ has property \SB.
\end{corollary} 
\begin{proof} Every finite extension of $\LL\FF$ can be written as $\LL\FF'$ for some finite extension $\FF'/\FF$. Since $\FF/\KK$ is Galois, the Galois closure $\F''$  of $\FF'$ over $\KK$ is finite over $\FF'$. Then  all of Assumption \ref{ass:composto} hold for $\LL/\KK$, $\FF''/\KK$ so that $\LL\FF''$ has $\B$ by Corollary \ref{cor:Bcomposto}.
\end{proof}

%\begin{comment}
%\lt{E' vero qui che se supponiamo che $\FF$ ha la SB allora $\LL\FF$ ha la SB?}
%\pietro{Let $\LL\F(\alpha)$ be a finite extension of $\LL\F$. We have
%\begin{equation*}
%\LL\F(\alpha)=\left(\bigcup_n \LL_n\F\right)(\alpha)=\bigcup_n (\LL_n\F)(\alpha)=\bigcup_n \LL_n(\F(\alpha))=\LL(\F(\alpha)).
%\end{equation*}
%Now assumptions (DVF) and (SB) are satisfied for any finite extension of $\F$ so $\LL\F(\alpha)$ has (B).
%
\end{comment}

\section{Supercuspidal forms: the compatible system outside $p$}\label{sec:outsidep}

Let $p$ be an \textit{odd} prime. In this section and the following, we apply the results we proved on property \B to the case of the adelic Galois representation attached to a modular Hecke eigenform, supercuspidal at $p$.

\subsection*{The Hecke correspondence}
Let $\pi_p$ be an irreducible admissible representation of $\GL_2(\QQ_p)$, with coefficients in an arbitrary algebraically closed field of characteristic zero. As explained in \cite[\S 0.5]{Carayol1986}, by (a variant of) the local Langlands correspondence, there is a corresponding two-dimensional Frobenius semisimple representation $\sigma(\pi_p)$ of the Weil-Deligne group  $WD(\QQ_p)$.
This so-called ``Hecke'' correspondence satisfies the following conditions:
\begin{itemize}
\item[a)] The determinant of $\sigma(\pi_p)$ is related to the central character $\psi_p$ of $\pi_p$ by
$$\det(\sigma(\pi_p))=\psi_p^{-1}\omega_p^{-1}$$
where $\omega_p=|\cdot |_p$.
\item[b)] $\sigma(\psi\cdot \pi_p)=\psi^{-1}\sigma(\pi_p)$ for every character $\psi$ of $\QQ_p^\times$.
\end{itemize}
Note that in \cite{Carayol1986} the isomorphism of local class field theory is normalized in such a way that the geometric Frobenius corresponds to the uniformizer (see \cite[\S 0.4]{Carayol1986}, and the same symbol denotes the character of $\QQ_p^\times$ and that of $W(\QQ_p)$).
Moreover, the Hecke correspondence is invariant under automorphisms
of the coefficient field.

\subsection*{A compatible system of representations of $W(\QQ_p)$}\label{subsect:compsyst} Let $f\in S_k(\Gamma_0(N),\chi)$ be a new Hecke eigenform for some $k\geq 2, N\ge 1$. Let $\pi_f$ be the corresponding automorphic representation of $\GL_2(\AA_\QQ)$. We follow the normalisation in \cite[\S 2.1]{LoefflerWeinstein2012}. This representation decomposes
as a restricted tensor product over all places $v$ of $\QQ$:
\[ \pi_f = \bigotimes_v \pi_{f,v}, \]
where $\pi_{f,v}$ is an irreducible admissible representation of $\GL_2(\QQ_v)$.
By \cite[0.6]{Carayol1986}, there exists a finite extension $\KK_f$  of $\QQ$ such that for every finite prime $p$, the representation $\sigma(\pi_p)$ is defined over $\KK_f$.
Then, for every place $\lambda$  of $\KK_f$ \emph{not dividing }$p$, there is a \emph{continuous} representation $\sigma^\lambda (\pi_{f,p}): W(\QQ_p) \to \GL_2(\KK_{f,\lambda})$, giving rise to a compatible system of representations $\{\sigma^\lambda(\pi_{f,p})\}_\lambda$ of $W(\QQ_p)$. 
\bigskip

The two constructions described above are related by a celebrated theorem of 
 Carayol \cite[Theorem (A)]{Carayol1986}. 
 
\begin{theorem}[Carayol] For every finite place $\lambda$ of $\KK_f$, let $\rho_{f,\lambda}:G_\QQ\longrightarrow \GL_2(\KK_{f,\lambda})$
be the $\lambda$-adic representation of $G_\QQ$ associated to $f$.
If $\lambda$ does not divide $p$, then $\rho_{f,\lambda}$ coincides with $\sigma^\lambda(\pi_p)$ over $W(\QQ_p)$.
\end{theorem}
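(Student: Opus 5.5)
This is Carayol's theorem, i.e. local–global compatibility at primes $\lambda\nmid p$. We do not reprove it from scratch. We invoke \cite[Theorem (A)]{Carayol1986}, and we recall the structure of the argument so that the normalisations used later are transparent. The plan has three steps.

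\textbf{Step 1: reduction to semisimple comparison plus monodromy.} Both $\rho_{f,\lambda}|_{W(\QQ_p)}$ and $\sigma^\lambda(\pi_{f,p})$ are continuous $\lambda$-adic representations of $W(\QQ_p)$ with $\lambda\nmid p$. By Grothendieck's monodromy theorem, each corresponds to a Weil--Deligne representation $(r,N)$. It therefore suffices to show two things:
\begin{itemize}
\item the Frobenius-semisimplifications of the underlying $W(\QQ_p)$-representations agree;
\item the monodromy operators $N$ agree.
\end{itemize}
The monodromy condition is automatic in the two-dimensional case once one knows which of the following holds: $\pi_{f,p}$ is supercuspidal or principal series (so $N=0$), or $\pi_{f,p}$ is special (so $N\neq 0$). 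We take this dichotomy from the purity and weight considerations available on both sides.

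\textbf{Step 2: comparison of traces via the geometry of Shimura curves.} Realise $\rho_{f,\lambda}$ in the $\lambda$-adic étale cohomology of a modular curve, or of a Shimura curve for a quaternion algebra ramified at $p$ when $\pi_{f,p}$ is supercuspidal or special, via Jacquet--Langlands. Then compute the action of $W(\QQ_p)$ on the vanishing cycles of a suitable integral model, namely the Čerednik--Drinfeld uniformisation at $p$. This expresses the trace of $\rho_{f,\lambda}(w)$, for $w$ in $W(\QQ_p)$ outside inertia, in terms of $\pi_{f,p}$. Properties (a) and (b) of the Hecke correspondence fix the determinant and the twist behaviour, so that a trace identity on a dense set pins down the semisimplification.

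\textbf{Step 3: descent to the compatible system.} Since the Hecke correspondence is invariant under automorphisms of the coefficient field, and $\sigma(\pi_{f,p})$ is defined over $\KK_f$, the identification from Step 2 holds for every $\lambda\nmid p$ simultaneously. This yields the claimed equality with $\sigma^\lambda(\pi_p)$.

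The main obstacle is Step 2 in the supercuspidal case. There the local Galois representation is not visible from unramified Hecke eigenvalues, and one genuinely needs Carayol's analysis of the bad reduction of Shimura curves. This is exactly why we cite the result rather than reprove it.
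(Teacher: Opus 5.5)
Your proposal agrees with the paper, which likewise states this as Carayol's Theorem~(A) and cites \cite{Carayol1986} without giving a proof. Your accompanying recap is inessential and partly inaccurate, so drop it or correct it: at maximal level the \v{C}erednik--Drinfeld uniformisation only sees unramified twists of Steinberg, and supercuspidal $\pi_p$ are instead handled on curves whose quaternion algebra is split at $p$ (for elliptic modular forms, the modular curves) with Drinfeld level-$p^n$ structure, via the Lubin--Tate description of the vanishing cycles at supersingular points; your Step~3 is a non sequitur, since invariance of the Hecke correspondence under automorphisms of the coefficient field does not transport an identity from one $\lambda$ to another (the comparison is carried out for each $\lambda\nmid p$ directly).
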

Carayol's theorem yields the following consequence for the local image at \(p\)
of the away-from-\(p\) part of the Galois representation attached to a modular form.
\begin{proposition}\label{prop:primetop}
Let $f\in S_k(\Gamma_0(N),\psi)$ be an eigenform, and let
\[\rho_f^{(p)}:G_\QQ \longrightarrow \GL_2\Big(\prod_{\lambda \nmid p} \KK_{f,\lambda}\Big)\] 
be the prime-to-$p$ part of the adelic Galois representation associated to $f$.
Assume that either
\begin{itemize}
\item $\pi_{f,p}=\pi(\chi_1,\chi_2)$ is a principal series and the unitary part of one of $\chi_1, \chi_2$ has finite order; or
\item $\pi_{f,p}$ is supercuspidal.
\end{itemize}
 Then:
\begin{itemize}
\item[a)] $\rho_{f}^{(p)}(I_p)$ is finite;
\item[b)] the projective image of $\rho_f^{(p)}|_{G_p}$ is finite.
\end{itemize}
\end{proposition}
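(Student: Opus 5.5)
The plan is to reduce both statements to explicit knowledge of the Weil–Deligne representation $\sigma(\pi_{f,p})$, using Carayol's theorem. For each $\lambda\nmid p$, the restriction $\rho_{f,\lambda}|_{W(\QQ_p)}$ coincides with $\sigma^\lambda(\pi_{f,p})$. The latter is obtained from the $\KK_f$-rational Weil–Deligne representation $\sigma(\pi_{f,p})=(r,N)$ by the standard Grothendieck recipe. In both cases of the hypothesis the monodromy $N$ vanishes: principal series and supercuspidals correspond to Weil–Deligne representations with $N=0$, and only special (Steinberg-type) representations have $N\neq0$. Hence $\sigma^\lambda(\pi_{f,p})|_{I_p}=r|_{I_p}\otimes\KK_{f,\lambda}$ for every $\lambda\nmid p$. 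The key point is that $r|_{I_p}$ is a single representation with finite image, independent of $\lambda$, because $r$ is a representation of $W(\QQ_p)$ with open kernel on inertia.

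For a), I would use that $r(I_p)$ is a finite group $H\subset\GL_2(\KK_f)$. The prime-to-$p$ adelic image $\rho_f^{(p)}(I_p)$ then embeds diagonally into $\prod_{\lambda\nmid p}\GL_2(\KK_{f,\lambda})$ as the image of a single finite group acting through the same $\KK_f$-rational matrices, up to conjugation in each factor. More precisely, $\rho_f^{(p)}(\sigma)$ is trivial for $\sigma\in\ker r|_{I_p}$, which is an open subgroup of $I_p$ independent of $\lambda$. Consequently $\rho_f^{(p)}(I_p)$ is a quotient of $I_p/\ker(r|_{I_p})$, which is finite.

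For b), I split into the two cases. In the supercuspidal case with $p$ odd, $r$ is irreducible and, since $p\neq2$, induced: $r=\mathrm{Ind}_{W(E)}^{W(\QQ_p)}\theta$ for a quadratic extension $E/\QQ_p$ and a character $\theta$ of $W(E)$. Then $\theta$ and its conjugate $\theta^{c}$ agree on an open subgroup; more directly, $\theta/\theta^{c}$ has finite order, because both characters have the same restriction to the norm-one part up to finite index and the central characters coincide. So on the index-$2$ subgroup $W(E)$ the projective image is given by $\theta/\theta^c$, which is finite, and the full projective image is an extension of $\ZZ/2$ by a finite group. In the principal series case, $r=\chi_1\oplus\chi_2$ and the projective image is given by $\chi_1/\chi_2$. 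Here I expect the main obstacle. One needs the hypothesis on unitary parts, together with the Weil pairing and purity constraint $\chi_1\chi_2=\psi^{-1}\omega_p^{-1}$, to deduce that $\chi_1/\chi_2$ has finite order, and one must check that this holds uniformly in $\lambda$. In the supercuspidal case the analogous uniformity comes for free, since all $\sigma^\lambda$ are base changes of the one $\KK_f$-rational $r$.

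Finally, I would pass from $W(\QQ_p)$ to $G_p$ by density of the Weil group and continuity. The projective image of $\rho^{(p)}_f|_{G_p}$ is the closure of the projective image of $W(\QQ_p)$, and since the latter is the image of a finite group acting simultaneously on all factors, it is already finite and closed.
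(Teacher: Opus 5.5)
\textbf{Verdict: there is a genuine gap, in the principal series case of b).} The rest is sound. Your arguments for a), for the supercuspidal case of b), and for the passage from $W(\QQ_p)$ to $G_p$ work. They are also more explicit than the paper about uniformity in $\lambda\nmid p$: the kernels of $r|_{I_p}$ and of the projectivisation of $r$ are the same for every $\lambda$. The paper handles the supercuspidal case differently, via Tate's description of irreducible representations of $W(\QQ_p)$ as twists $\sigma\otimes|\cdot|^s$ of finite-image ones; your induced-character route is an equally valid alternative.

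One justification there needs fixing. On the norm-one group $E^1$ one has $\theta^c=\theta^{-1}$, so $\theta$ and $\theta^c$ do not have the same restriction there. The clean reason $\theta/\theta^c$ has finite order is $(\theta/\theta^c)(x)=\theta(x/\bar x)$ with $x/\bar x\in E^1$. Since $E^1$ is compact, $\theta/\theta^c$ factors through a continuous character of a profinite group.

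\textbf{The gap.} You call the principal series case ``the main obstacle'' and leave it unresolved, and the tool you point to cannot close it. The relation $\chi_1\chi_2=\psi_p^{-1}\omega_p^{-1}$ is the determinant relation, not a purity statement, and it only controls the product. Write $\chi_i=\chi_i'|\cdot|^{s_i}$ with $\chi_i'$ unitary and $s_i\in\mathbb{R}$. The determinant fixes $s_1+s_2$ and $\chi_1'\chi_2'$, but says nothing about $s_1-s_2$. If $s_1\neq s_2$, then $(\chi_1/\chi_2)(p)$ has complex absolute value $p^{s_2-s_1}\neq 1$, so the projective image of $W(\QQ_p)$ is infinite whatever the unitary parts are.

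\textbf{The missing input} is the Ramanujan--Petersson conjecture (Deligne's theorem). It gives $s_1=s_2=(k-1)/2$ in the paper's normalisation, so $\chi_1/\chi_2=\chi_1'/\chi_2'$. Since $\chi_1'\chi_2'=\psi_p$ has finite order, finite order of one $\chi_i'$ forces finite order of the other, hence of the ratio. The projective image of $W(\QQ_p)$ under $\chi_1\oplus\chi_2$ is then the finite image of $\chi_1'/\chi_2'$.

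Your separate worry about uniformity in $\lambda$ is not an obstacle here. As in the supercuspidal case, all the $\sigma^\lambda(\pi_{f,p})$ come from the single representation $\chi_1\oplus\chi_2$ defined over a number field, so your kernel argument and the density step go through unchanged.
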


\noindent Part b) implies in particular that the subfield of $\ovl\QQ$ fixed by $\ker\rho_f^{(p)}$ has property \ADZ.

\begin{proof} Every continuous complex representation of a profinite group is finite by the no-small-subgroup property. This proves $a$). \\
By \cite[\S 2.2.1]{Tate1979}, every irreducible complex representation of $W_F$, $F$ a $p$-adic field, has the form $\sigma\otimes |\cdot |_p^s$, where $\sigma$ is the restriction of a Galois representation, hence it has finite image (by the argument in $a$).  
If $\pi_{f,p}$ is supercuspidal, then $\sigma(\pi_{f,p})$, the associated representation of $W_{\QQ_p}$, is irreducible \cite[\S 0.9]{Carayol1986}, so that $b$) follows. 
%If $\pi_{f,p}=\pi(\chi_1,\chi_2)$ is a principal series, then the projective image of $\sigma(\pi_p)$ is that of $\chi'_1\oplus\chi'_2$, where $\chi'$ denotes the unitary part of $\chi$. Since $\chi'_1\chi'_2=\psi_p$ (the $p$-component of the adelic character corresponding to the nebentypus $\psi$) has finite order, $\chi_1/\chi_2$ has finite order if and only if either $\chi'_1$ or $\chi'_2$ does.\\
If $\pi_{f,p}\cong \pi(\chi_1,\chi_2)$ is a principal series, then 
\(
  \sigma(\pi_p)\;\simeq\; \chi_1 \oplus \chi_2 
\), where 
$\chi_i=\chi_i'\,|\cdot|^{(k-1)/2}$ for a unitary character $\chi_i'$ \cite[\S 0.5]{Carayol1986}; moreover the characters $\chi'_i=\chi_i|\cdot|^{-(k-1)/2}$ are unitary, by the Ramanujan-Petersson conjecture, proved by Deligne (see for example \cite[\S  2.2]{Blasius2006}). %\andr{did you have a reference here?}
%\lt{Naturalmente non trovo una referenza che dica esattamente quello che voglio. Quella indicata (è in DB) lo prova per rappresentazioni automorfe su un campo totalmente reale, naturalmente con un sacco di difficoltà in più... comunque include il nostro caso $F=\QQ$} \andr{ok, grazie! Penso che vada bene così} 
Then the
\emph{projective} image of $\sigma(\pi_p)$ depends only on the ratio
$\chi_1'/\chi_2'$.
Since
\(
  \chi_1'\chi_2' \;=\; \psi_p
\),
where $\psi_p$ is the $p$-component of the finite-order nebentype $\psi$, we have
\[
  \chi_1'/\chi_2' \ \text{has finite order}
  \ \Longleftrightarrow\
  \chi_1' \ \text{and}\ \chi_2' \ \text{have finite order}.
\]
(In particular, $\chi_2'=\psi_p\,(\chi_1')^{-1}$, so one has finite order if and only if both do.)\qedhere
\end{proof}

Notice that when $\pi_{f,p}$ is an unramified principal series, $\chi'_1$ and $\chi'_2$ have finite order if and only if the Satake parameters (i.e. the roots of $X^2- {p^{-\frac{k-1} 2}} {a_p}  + \psi(p))$ are roots of unity. This case includes the strong supersingular case $a_p=0$ studied in \cite{Habegger2013, AmorosoTerracini2024, Piras2026}.
%\notalealine{... e ritorniamo a questa condizione! Direi che nel caso sferico la condizione implica che $f$ supersingolare a (tutti i primi sopra a) $p$....  forse equivalente? DA PENSARE}

\section{Supercuspidal forms: the $p$-adic Galois representations}\label{sec:padic}

%\subsection{The case when the Hecke field is $\Q$} 
For $p$ odd, the complex supercuspidal representations of $\GL_2(\QQ_p)$ are parametrized by \emph{admissible pairs} $(E,\theta)$, where $E$ is a quadratic extension of $\QQ_p$ and $\theta: E^\times\rightarrow \CC^\times$ is a \emph{regular} character, i.e. one that does not factor through  $N_{E/\QQ_p}$; see \cite[Definition 3.8]{LoefflerWeinstein2012} and  \cite{Gerardin1978}. If $\pi_p$ is associated to the pair $(E,\theta)$ then $$WD(\pi_p)=\mathrm{Ind}_{W_E}^{W_{\QQ_p}}\theta, $$ where $\theta$ is regarded as a character of $W_E$ by local class field theory.
We recall that the conductors of $\pi_p$ and $\theta$ are related by the following \emph{conductor formula}
(see for example \cite[\S 4]{MandalMondal2026})
\begin{equation} \label{eq:conduttore} \mathrm{cond}(\pi_{f,p})=\begin{cases} 2\,\mathrm{cond}(\theta) &\hbox{if $E/\QQ_p$ is unramified}\\
\mathrm{cond}(\theta)+1 & \hbox{if $E/\QQ_p$ is ramified}\end{cases}.\end{equation}
Let $f\in S_k(\Gamma_0(N))$ be an $N$-new Hecke eigenform, with $k\geq 2$ and $\KK_f=\QQ$. Let $p$ be an odd prime. We assume that the $p$ component $\pi_{f,p}$ of the corresponding automorphic representation of $\GL_2(\AA_\QQ)$ is supercuspidal. Let $(E,\theta)$ be the associated admissible pair. 
In this case, the conductor of $\pi_{f,p}$ is the $p$-adic valuation of $N$.  Moreover, every regular character of a ramified quadratic extension of $\QQ_p$ has conductor at least 2. It follows that every modular form $f$ which is twist-minimal of level $Mp^2$, with $p\nmid M$, is supercuspidal arising from a pair $(\QQ_{p^2},\theta)$.

Let 
$$\rho_{f,p}:\GG_\QQ\longrightarrow \GL_2(\ZZ_p)$$
be the $p$-adic representation associated with $f$. 

By \cite[Corollary B.1.3]{ContiTerracini2025} two possible cases arise: 
\begin{description}%[label=(\Alph*)]
\item[\rm{(open)}] $\rho_{f,p}(I_p)$ is open in $\GL_2(\ZZ_p)$; 
\item[\rm{(ind)}] $\rho_{f,p}|_{G_p}$ is induced from a character $\psi$  a quadratic extension of $\QQ_p$.
\end{description}
In case (open), $\rho_{f,p}$ satisfies all the assumptions we will require thanks to \cite[Proposition 3.2]{ContiTerracini2025} (see Proposition \ref{prop:padic} below). In case (ind), we prove them in the next subsection.

\subsection{LPTR for induced representations} \label{sect:induced} 
Assume $p\geq 5$. Then we are in the case 3) or 4) of \cite[\S 11, Proposition 1]{FontaineMazur1995}, according to the fact that $\rho_{f,p}|_{G_p}$ becomes or not crystalline on the field $F_1=\QQ_p(\pi_1)$, with $\pi_1=\sqrt[p-1]{-p}$; if it is not the case, then it becomes crystalline on $F_2=\QQ_{p^2}(\pi_2)$, with $\pi_2^{p+1}=\pi_1$. In both cases, the representation $\rho_{f,p}|_{G_p}$ becomes crystalline over a finite extension $F$ of $E$  and the associated $(\varphi,N,\Gal(F/\QQ_p))$-module has the form $D_{III}(0,k-1,a_1,a_2,i_1,i_2)$ (case 3) or $D_{IV}(0,k-1,d,i_1,i_2,\alpha)$ (case 4) (see \cite[\S 11]{FontaineMazur1995}). %\andr{should this be \S 11?}).
%\notalealine{I suspect that, $(E,\theta)$ being fixed, there is only one potentially semistable $p$-adic representation $\rho$ of $G_p$ satisfying condition (II), that is $F=E$ and $\rho$ induced by the (unique) potentially semistable character $\psi$ such that }
%\andr{Mi sembra però che sia sempre possibile effettuare un twist per un carattere non-ramificato, il che corrisponderebbe a modificare i parametri $a_1,a_2$ o $\alpha$? E non so come leggere il valore di questi parametri a partire dalla forma $f$ perché $a_p=0$. Mi chiedo se sia sempre possibile effettuare un twist per un carattere non-ramificato per avere LPTR?}

 The following proposition shows that the condition on the Hecke field of $f$ forces the pair $(E,\theta)$ to be of a very particular form:
\begin{proposition}\label{prop:r=2}
Assume $p\geq 5$. Let $f\in\Gamma_0(Np^r)$($\gcd(p,N)=1$) be a modular newform, supercuspidal at $p$, such that its Hecke field is $\QQ$. Then $r=2$ and $WD(\pi_{f})$ is induced by a regular character of $\QQ_{p^2}$.
\end{proposition}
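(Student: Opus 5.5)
The plan is to exploit rationality of the Weil--Deligne representation. Since $\KK_f=\QQ$, the representation $\sigma(\pi_{f,p})=\mathrm{Ind}_{W_E}^{W_{\QQ_p}}\theta$ is defined over $\QQ$, in the sense of Carayol \cite[0.6]{Carayol1986}. Its character, in particular its restriction to inertia, is $\Gal(\ovl\QQ/\QQ)$-invariant. Moreover, the Hecke correspondence commutes with automorphisms of the coefficient field. Hence for every $\gamma\in\Gal(\ovl\QQ/\QQ)$ the pair $(E,\gamma\circ\theta)$ is equivalent to $(E,\theta)$. Equivalence of admissible pairs means $\gamma\circ\theta\in\{\theta,\theta^c\}$, where $c$ is the nontrivial automorphism of $E/\QQ_p$. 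It follows that the values of $\theta$ generate a field of degree at most $2$ over $\QQ$.

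We first treat the restriction of $\theta$ to $\calo_E^\times$, whose values are roots of unity. Write $n=\mathrm{cond}(\theta)$, so $\theta$ is trivial on $U_E^{n}$ but not on $U_E^{n-1}$.

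Suppose first that $n\ge 2$. Then $\theta|_{U_E^{n-1}}$ is a nontrivial additive character of $U_E^{n-1}/U_E^n\cong k_E$. It takes values in $\mug_p$, and its field of values is $\QQ(\zeta_p)$, of degree $p-1\ge 4>2$ for $p\ge5$. This contradicts the degree bound, so $n\le 1$.

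It remains to rule out the ramified case. If $E/\QQ_p$ is ramified, the text already notes that regular characters have conductor at least $2$, which is impossible. Hence $E=\QQ_{p^2}$ is unramified, $n=1$, and the conductor formula \eqref{eq:conduttore} gives $r=\mathrm{cond}(\pi_{f,p})=2n=2$. We should also exclude $n=0$: a character trivial on $\calo_E^\times$ factors through the valuation and hence through the norm, contradicting regularity.

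The main obstacle is making the descent step rigorous. We need that Galois conjugation of the coefficients of $\pi_{f,p}$ corresponds to conjugating $\theta$ (possibly up to a $\gamma$-dependent twist, such as a square root of $p$ from the $|\cdot|^{1/2}$ normalization, which affects only unramified parts). We also need that the field of rationality of $\pi_{f,p}$ is contained in $\KK_f=\QQ$. I would handle this by restricting attention to $\theta|_{\calo_E^\times}$, where the normalization twists are trivial, so that the identity $\gamma\circ\theta|_{\calo_E^\times}\in\{\theta|_{\calo_E^\times},\theta^c|_{\calo_E^\times}\}$ follows cleanly from the invariance of the Hecke correspondence under coefficient automorphisms.

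For $n\ge2$ there is an alternative to the degree count. Restricted to $U_E^{n-1}$, $\theta^c$ corresponds to the additive character $x\mapsto\psi(\bar x)$. Conjugation by $\gamma$ with cyclotomic image $a\in\FF_p^\times$ sends $\psi(x)$ to $\psi(ax)$. Choosing $a\ne\pm1$ and using $\bar{\ }$ on $k_E$ gives a contradiction directly.
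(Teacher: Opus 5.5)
Your argument is correct, but it reaches the tameness of $\theta$ by a different mechanism from the paper's. The paper works $p$-adically. It uses Saito's local--global compatibility $WD(\pi_{f,p})\otimes\Qbar_p\simeq WD(\rho_{f,p}|_{G_p})$ and the criterion that $\rho_{f,p}|_{G_p}$ is crystalline over $F$ if and only if $WD|_{I_F}$ is trivial. It then invokes the Fontaine--Mazur classification: for $p\ge 5$, a two-dimensional potentially crystalline representation with $\QQ_p$-coefficients becomes crystalline over a tame extension, of degree dividing $p^2-1$. Hence $\theta$ is trivial on $1+\mathfrak P_E$.

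You instead start from Carayol's rationality of $\sigma(\pi_{f,p})$ over $\KK_f=\QQ$, which the paper records in its discussion of the compatible system outside $p$. You restrict to $\calo_E^\times$ to eliminate the unramified normalization twists, and deduce that the Galois orbit of $\theta|_{\calo_E^\times}$ has at most two elements. A wild part would then put $\zeta_p$, of degree $p-1\ge 4$, among the values, which is a contradiction.

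From there both proofs finish identically: ramified $E$ is excluded because admissible pairs over a ramified $E$ have conductor $\ge 2$, and the conductor formula gives $r=2$. You also make explicit the exclusion of $n=0$, which the paper leaves implicit.

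Your route is more elementary and avoids $p$-adic Hodge theory and the details of the Fontaine--Mazur tables. It also makes the role of $p\ge 5$ transparent: for $p=3$ one has $[\QQ(\zeta_3):\QQ]=2$, which matches the paper's remark that higher conductor occurs at $3$.

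The paper's route needs only that $\rho_{f,p}$ has coefficients in $\QQ_p$. It also sets up the description of $\rho_{f,p}|_{G_p}$ becoming crystalline over a tame $F$, which is reused in the analysis of the induced case. Your degree count can itself be run $p$-adically: the inertial type of a potentially semistable representation with $\QQ_p$-coefficients has $\QQ_p$-valued character, and $[\QQ_p(\zeta_p):\QQ_p]=p-1$. So the two proofs rest on the same rationality constraint, seen on the automorphic/$\ell$-adic side and on the $p$-adic side respectively.

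One small point in your optional alternative for $n\ge 2$: for ramified $E$, conjugation acts on $U_E^{n-1}/U_E^{n}$ by $\pm 1$, not through the residue field. The conclusion for $a\neq\pm1$ is unaffected.
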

\begin{proof} 
Let $(E,\theta)$ be the admissible pair associated to $\pi_{f,p}$.
By local Langlands correspondence (in particular the main theorem in \cite{Saito1997}) $WD(\pi_{f,p})\otimes_{\Qbar}\Qbar_p\simeq WD(\rho_{f,p}|_{G_p})$. The representation $\rho_{f,p}|_{G_p}$ becomes crystalline over a finite extension  $F$ of $\QQ_p$ if and only if $WD(\rho_{f,p})|_{I_F}$ is trivial (see for example \cite[Lemma 2.2.2]{GhateMezard2009}, taking into account that $\rho_{f,p}|_{G_p}$ is potentially crystalline).
If $p\geq 5$, then the classification of Fontaine and Mazur mentioned above shows in particular that  every two dimensional potentially crystalline representation with coefficients in $\QQ_p$ becomes crystalline  over a field  $F$ \emph{tamely ramified} over $\QQ_p$ (the degree of $F$ is indeed a divisor of $p^2-1$).
Since $WD(\pi_{f,p})\simeq \mathrm{Ind}_{W_E}^{W_{\QQ_p}}\theta$  the preceding discussion yields strong restrictions on the character $\theta$. In particular, $\theta$ must be tamely ramified, that is trivial over $1+\mathfrak{P}_E$. This cannot happen for a regular character when $E/\QQ_p$ is ramified; on the other hand, if $E=\QQ_{p^2}$ then $\theta$ is trivial $1+p\ZZ_{p^2}$ and so by the conductor formula \ref{eq:conduttore} the exponent of $p$ in the level of $f$ must be  $2$.
\end{proof}

\begin{remark}
    When $p=3$, the situation is more varied, and there exist cuspidal eigenforms with coefficients in $\QQ$ that are supercuspidal at $3$ and have conductor strictly greater than $2$. However, in order not to burden the exposition, we have decided to exclude this case from our analysis.
\end{remark}

Assume that we are in case (ind). Then $\rho_{f,p}|_{G_p}\simeq \mathrm{Ind}_{G_{E'}}^{G_p} \psi$, where $E'/\QQ_p$ is a quadratic extension and  $\psi$ is a character of $G_{E'}$. Then Mackey's formula gives 
$$\rho_{f,p}|_{G_{E'}}\simeq \psi\oplus \psi'$$
where $\psi'$ is obtained from $\psi$ conjugating by any element of $G_p\setminus G_{E'}$.
Since $\rho_{f,p}|_{G_p}$ is potentially semistable, so are $\psi$ and $\psi'$, and by functoriality $$WD(\rho_{f,p}|_{G_{E'}})\simeq WD(\psi)\oplus WD(\psi').$$ 

%{\color{violet} QUESTA PARTE VIOLA POTREBBE SOSTITUIRE LA PARTE BLU?:\andr{ho fatto piccole modifiche ma mi sembra corretta}\\
It follows that $E'=E$ and we can assume $WD(\psi)=\theta$. Let $K'$ be a $p$-adic field containing $E$ and the image of $\psi$, that we consider now as a character $G_E\to (K')^\times$. \\
 For every finite extension $K/\QQ_p$, we denote by $r_K:K^\times \to G_K^{ab}$ the local Artin map with arithmetic normalization, and by $\underline{K}^\times=R_{K/\QQ_p}(\GG_m)$ the $\QQ_p$-torus obtained via Weil restriction of scalars. By \cite[Proposition B.4 (i)]{ConradLifting2012}, the fact that $\psi$ becomes crystalline on $G_F$ means that $\psi\circ r_E$ coincides with (the restriction to $\QQ_p$-points of) a $\QQ_p$-homomorphism $\omega: \underline{E}^\times\to \underline{K'}^\times$ on an open subset of $\calo_E^\times$ containing $N_{F/E}(\calo_F^\times)$. 
We can write $\omega= \tau_1^{n_1}\tau_2^{n_2}$, where $\tau_1, \tau_2$ are the two embeddings of $E$ in $\Qbar_p$. In particular, since $E$ is quadratic hence Galois over $\QQ_p$, we can see $\omega$ as a character $\underline{E}^\times\to \underline{E}^\times$. As explained in \cite[Appendix B]{ConradLifting2012}, $n_1,n_2$ are the Hodge-Tate weights of the $\Q_p$-representation associated to $\psi$, so that $\{n_1,n_2\}=\{0,k-1\}$.
Then, choosing one embedding $E\hookrightarrow \ovl\QQ_p$, we can  write
$$\psi\vert_{G_F}\circ r_F=N_{F/E}^{k-1}.$$

Let $F_0$ be the largest unramified extension of $\Q_p$ contained in $F$, and let $a=[F_0:\Q_p]$. By \cite[Proposition B.4 (ii)]{ConradLifting2012}, the filtered $\varphi$-module $D_{\mathrm{cris}}(V)$ over $F$ covariantly attached to the crystalline
representation $V$ is free of rank 1 over $K\otimes_{\QQ_p}F_0$ and its $F_0$-linear endomorphism $\varphi^a$ 
is given by the action of the product $\psi(r_F(\pi_F))^{-1}\omega(\pi_{F})\in K^\times$, where $\pi_F\in\calo_F$ is any uniformizer. Let $y$ be the eigenvalue of $\varphi^a$, then $\psi$ is PTR if and only if for a uniformizer $\pi_F$ of $F$, $y^{-1}\omega(\pi_F)=y^{-1}N_{F/E}(\pi_F)^{k-1}$ is a root of unity. %ANCORA IMPRECISO \andr{questo paragrafo mi sembra ok, giusto?}
%\andr{mi torna che serva una condizione di questo tipo, però non vedo ancora come verificarla a partire dalla forma modulare...}\lt{ho la speranza che la condizione PTR sia sempre verificata nel caso con coefficienti in $\QQ_p$... devo studiare più in dettaglio Fontaine-Mazur}

% \andr{Se $\psi$ ha valori in $\Q_p^\times$ allora penso di sì. \notalea{penso che questo non possa succedere se $\rho_{f,p}$ irriducibile} Altrimenti se ha valori in $E/\Q_p$ ed $\cO_E^\times$ ha rango almeno 2 su $\Z_p^\times$, mi sembra che in $W(\psi)$ si possano scegliere i valori della parte ramificata e non ramificata in modo appartengano a due $\Z_p$-linee indipendenti in $\cO_E^\times$ (così che nessuna potenza del Frobenius sia nell'immagine dell'inerzia)}
 
Put $D=D_{\mathrm{pst}}(\rho_{f,p})$. We describe $D$ in terms of the classification from \cite{FontaineMazur1995} recalled in the previous subsection. 
Since $\rho_{f,p}|_{G_E}\simeq \psi\oplus\psi'$, the corresponding filtered $(\varphi,N,\Gal(F/E))$-module decomposes as a direct sum 
\[ D|_{G_E}\simeq  D_{\mathrm{pst}}(\psi)\oplus  D_{\mathrm{pst}}(\psi').\]
Since the Hodge-Tate weights of $\rho_{f,p}$ are $(0,k-1)$, the filtration of $D$ is determined by the knowledge of the $F$-line $\Delta=\mathrm{Fil}^1 D=\mathrm{Fil}^{k-1} D$. 
It follows that $\Delta$ must be stable by the Frobenius endomorphism $\varphi$. This excludes the Type III case (note that there is a typo in \cite[\S 11, page 207]{FontaineMazur1995}: the correct formula for $\Delta$ should be $\Delta=F_1(\pi_1^{i_2}\otimes e_1 +\pi_1^{i_1}\otimes e_2)$). Indeed, we should have $a_1=a_2$ and $v_p(a_1a_2)=k-1$, which is impossible because $k$ is even. For Type IV, taking into account \cite[Theorem A and Proposition 2]{FontaineMazur1995}, we see that the only possibilities are  
$$
d=p^{k-1},\quad i_2=p-1-i_1,\quad\alpha\in \{0,\infty\}.$$
In this case $F=F_2=\QQ_{p^2}(\pi_2)$, where $\pi_2^{p^2-1}=-p$; therefore $F_0=\QQ_{p^2}$, and the eigenvalue of $\varphi^2$ is $-d$. Notice that $F$ contains the three quadratic extensions of $\QQ_p$, hence $E\subseteq F$. It follows from the previous discussion that $\psi$ is potentially totally ramified if and only if $N_{F/E}(\pi_2)p^{-1}$ is a root of unity. But $N_{F/E}(\pi_2)=\pm p$ for all quadratic extensions $E/\QQ_p$.

The above discussion gives the following.

\begin{proposition}\label{prop:indLPTR}
Assume $p\geq 5$. In case \rm{(ind)}, the representation $\rho_{f,p}$ satisfies Assumption \ref{ass:CT} (LPTR).
\end{proposition}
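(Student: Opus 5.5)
We have to show that the inertia subgroup $I_p$ has open image in the decomposition image $\rho_{f,p}(G_p)$. Equivalently, the field $L=\ovl\QQ_p^{\ker\rho_{f,p}|_{G_p}}$ has finite residue degree over $\QQ_p$. We are in case (ind), with $p\geq 5$, and we know that $\rho_{f,p}|_{G_p}\simeq\mathrm{Ind}_{G_E}^{G_p}\psi$. Passing to finite-index subgroups does not affect openness. So we may restrict to $G_F$, where $F=F_2=\QQ_{p^2}(\pi_2)\supseteq E$ is the field found in the discussion above. There
\[\rho_{f,p}|_{G_F}\simeq \psi|_{G_F}\oplus\psi'|_{G_F},\]
and the image of $G_F$ has finite index in $\rho_{f,p}(G_p)$. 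It therefore suffices to show that $\rho_{f,p}(I_F)$ has finite index in $\rho_{f,p}(G_F)$. Here $\rho_{f,p}(G_F)$ is abelian, being the image of $(\psi,\psi')$.

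First we treat each character separately. The discussion above shows that $\psi|_{G_F}\circ r_F=N_{F/E}^{k-1}$ on $\calo_F^\times$, by crystallinity and \cite[Proposition B.4]{ConradLifting2012}. On a uniformizer we have $\psi(r_F(\pi_2))=y^{-1}N_{F/E}(\pi_2)^{k-1}$ up to the normalization, where $y$ is the eigenvalue of $\varphi^2$. With $d=p^{k-1}$ (the Type IV computation) and $N_{F/E}(\pi_2)=\pm p$, the value $\psi(r_F(\pi_2))$ differs from the image of some unit only by a root of unity. More precisely, $\psi(\mathrm{Frob})$ equals a root of unity times an element of $\psi(I_F)$. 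Hence a power of any Frobenius lift lies in $\psi(I_F)$, i.e. $\psi|_{G_F}$ is PTR.

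The main point is to get this simultaneously for the pair $(\psi,\psi')$, since PTR of each coordinate separately does not imply openness of the joint inertia image. Here I will use that $\psi'$ is the conjugate $\psi^c$ by $c\in G_p\setminus G_E$. Its crystalline type is $N_{F/E}^{k-1}$ composed with the nontrivial automorphism of $E$, i.e. Hodge–Tate weights swapped. I would choose a Frobenius lift of the form $r_F(\pi_2)$ and compute both values: $\psi(r_F(\pi_2))\,\zeta_1=\psi(r_F(u))$ with $u=N_{F/E}(\pi_2)^{k-1}/(\pm p^{k-1})$. In fact $\pi_2\cdot$(suitable unit) has norm exactly $\pm p$ in $E$, fixed by conjugation. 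So the same unit $u\in\calo_F^\times$ works for $\psi'$, because $\sigma(\pm p)=\pm p$ and $y$ is common to both summands (it is the eigenvalue of $\varphi^2$ on the rank-two module, where it is scalar $-d$). This yields $m\geq1$ with
\[(\psi,\psi')(r_F(\pi_2))^m\in(\psi,\psi')(I_F).\]
Then the abelian group $\rho_{f,p}(G_F)$ is generated by $\rho_{f,p}(I_F)$ and one Frobenius image whose $m$-th power lies in it, so the index is at most $m$.

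The delicate step is this simultaneous comparison. One must check carefully that the algebraic parts $\omega$ and $\omega^c$ evaluated at the chosen uniformizer agree with the respective Frobenius eigenvalues up to a common root of unity. I would verify this via the decomposition $D|_{G_E}\simeq D_{\mathrm{pst}}(\psi)\oplus D_{\mathrm{pst}}(\psi')$, using that $\varphi^2$ acts by the scalar $-p^{k-1}$ on all of $D$.
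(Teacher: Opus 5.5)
Your proposal is correct and is essentially the paper's argument. Conrad's Proposition B.4 gives $\psi|_{G_F}\circ r_F=N_{F/E}^{k-1}$ on $\calo_F^\times$, and the Type IV data ($\varphi^2=-p^{k-1}$ on $D$, $N_{F/E}(\pi_2)=\pm p$) then show that $\psi(r_F(\pi_2))$ is a root of unity. Your explicit check for the pair $(\psi,\psi')$ spells out a point the paper leaves implicit, and it is simpler than your detour through the unit $u$ suggests: $\varphi^2$ is the scalar $-p^{k-1}$ on both summands and $N_{F/E}(\pi_2)\in\QQ_p$, so $\psi(r_F(\pi_2))$ and $\psi'(r_F(\pi_2))$ are both roots of unity, hence $\rho_{f,p}(r_F(\pi_2))$ has finite order and $[\rho_{f,p}(G_F):\rho_{f,p}(I_F)]<\infty$ follows immediately.
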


Note that in case (ind) $\rho_{f,p}$ cannot be PTR unless it is (globally) induced, which can only happen if $f$ is CM, a case where the adelic property \B is already known.

\subsection{Properties of $\rho_{f,p}$}
The following is a corollary of \cite[Proposition 4.12]{ContiTerracini2025}.
Note that the current statement of \emph{loc. cit.} requires $\ovl\rho_{f,p}$ to surject onto $\GL_2(\FF_p)$, but it is clear that the proof only needs that the image of $\ovl\rho_{f,p}$ contains $\SL_2(\FF_p)$.

\begin{proposition}\label{prop:indNC}
In case $\mathrm{(ind)}$, assume that the image of $\ovl\rho_{f,p}$ contains $\SL_2(\FF_p)$. Then the representation $\rho_{f,p}$ satisfies Assumption \ref{ass:CT} (NC). 
\end{proposition}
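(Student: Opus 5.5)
The plan is to reduce to \cite[Proposition 4.12]{ContiTerracini2025} and check that its proof only uses $\SL_2(\FF_p)\subseteq \mathrm{Im}(\ovl\rho_{f,p})$. Set $\GG=\rho_{f,p}(G_\QQ)\subseteq\GL_2(\ZZ_p)$ with the Lie filtration $\GG_n=\GG\cap(1+p^n M_2(\ZZ_p))$, so that $\GG_{n-1}/\GG_n$ embeds into $M_2(\FF_p)$ via $1+p^{n-1}X\mapsto X \bmod p$. Conjugation by $\GG/\GG_n$ on $\GG_{n-1}/\GG_n$ factors through $\GG/\GG_1=\mathrm{Im}(\ovl\rho_{f,p})$ and becomes the adjoint action on $M_2(\FF_p)$.

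First step: identify $\GG_{n-1}/\GG_n$ for $n$ large. Because $\SL_2(\FF_p)\subseteq\mathrm{Im}(\ovl\rho_{f,p})$ and $p\ge 5$, the standard lifting lemma (Serre) shows $\GG\supseteq\SL_2(\ZZ_p)$. The determinant is $\vareps_p^{k-1}$, whose image is open in $\ZZ_p^\times$. Hence $\GG$ is open in $\GL_2(\ZZ_p)$, and for $n\ge n_0$ we get $\GG_{n-1}/\GG_n\cong M_2(\FF_p)=\mathfrak{sl}_2(\FF_p)\oplus\FF_p\cdot 1$, using $p$ odd. The $\SL_2(\FF_p)$-submodules are therefore $0$, the scalars, $\mathfrak{sl}_2$ (which is irreducible for $p\ge5$), and $M_2$.

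Second step: compute the local piece $G_{n-1}G_n/G_n$, where $G=\rho_{f,p}(G_p)$. In case (ind), $\rho_{f,p}|_{G_E}=\psi\oplus\psi'$, so $\rho_{f,p}(G_E)$ lies in a torus $T$ that is a Cartan subgroup over $\ZZ_p$ or over $\calo_E$. The characters $\psi,\psi'$ have Hodge--Tate weights $\{0,k-1\}$ at the two embeddings, so for $k\ge2$ the image of $G_E$ is open in the rank-two torus $T(\ZZ_p)$. Hence for large $n$ the image of $G_{n-1}$ in $M_2(\FF_p)$ is the two-dimensional Cartan subalgebra $\mathfrak{t}$, which contains the scalars together with a nonzero traceless element. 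The same holds on inertia: by Proposition \ref{prop:indLPTR} inertia is open in $G$, which matters for the later (LPTR)/(ltr) bookkeeping.

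Third step: take the normal closure. The $\SL_2(\FF_p)$-span of a nonzero element of $\mathfrak{sl}_2(\FF_p)$ is all of $\mathfrak{sl}_2$ by irreducibility, and with the scalars this gives $M_2(\FF_p)=\GG_{n-1}/\GG_n$. One has to check that the normal subgroup generated inside $\GG/\GG_n$ matches this module span. This holds because $\GG_{n-1}/\GG_n$ is abelian for $n\ge2$, so the subgroup generated by conjugates is exactly the $\FF_p[\GG/\GG_1]$-submodule.

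The main obstacle is the second step. One must verify that the traceless part of $\mathfrak t$ is genuinely nonzero at every large level, meaning the projective local image is infinite. That follows from $\psi/\psi'$ having infinite order, since its Hodge--Tate weights are $\pm(k-1)\neq0$. One must also reindex so that (NC) holds for all $n\ge n_0$, and this is allowed in Assumption \ref{ass:CT}.
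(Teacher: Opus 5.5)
Your proposal is correct and follows the same route as the paper. The paper simply invokes \cite[Proposition 4.12]{ContiTerracini2025} and observes that its proof only needs $\SL_2(\FF_p)\subseteq\mathrm{Im}(\ovl\rho_{f,p})$. Your reconstruction of that argument is sound: Serre's lifting lemma together with the open determinant makes $\GG$ open; the nonzero Hodge--Tate weights of $\psi/\psi'$ make the local Lie algebra a two-dimensional Cartan; and irreducibility of $\mathfrak{sl}_2(\FF_p)$ then forces the normal closure to be all of $M_2(\FF_p)$ for large $n$. This uses the standing assumption $p\ge 5$ of Section \ref{sect:induced}.
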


The next proposition summarizes the results of this section.

\begin{proposition}\label{prop:padic}
Let $f\in S_k(\Gamma_0(N))$ be a new Hecke eigenform for some $k\ge 2,N\ge 1$, with rational Hecke field and supercuspidal at $p$. If $\rho_{f,p}\vert_{G_{p}}$ is induced, assume that $p\ge 5$ and that the image of $\ovl\rho_{f,p}$ contains $\SL_2(\FF_p)$. Then the representation $\rho_{f,p}$ satisfies Assumptions \ref{ass:CT} $\mathrm{(LPTR,NC,CE)}$.
\end{proposition}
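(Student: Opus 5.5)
The plan is to split according to the dichotomy recalled from \cite[Corollary B.1.3]{ContiTerracini2025}: either $\rho_{f,p}(I_p)$ is open in $\GL_2(\ZZ_p)$ (case (open)), or $\rho_{f,p}|_{G_p}$ is induced from a character of a quadratic extension (case (ind)). The conditions are statements about the extension $\LL=\ovl\QQ^{\ker\rho_{f,p}}$ over $\KK=\QQ$, with $\GG=\rho_{f,p}(G_\QQ)\subseteq\GL_2(\ZZ_p)$ and the Lie filtration given by the congruence subgroups $\GG\cap(1+p^n M_2(\ZZ_p))$. Property (CE) is independent of the case, so I treat it first, then (LPTR) and (NC) in each case.

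For (CE), I would use complex conjugation or, better, an element acting by a scalar. Since $f$ is non-CM with rational coefficients, by Ribet/Momose the image of $\rho_{f,p}$ is open in $\GL_2(\ZZ_p)$ (the field is $\QQ$, so there are no inner twists). Hence $\GG$ contains the scalars $1+p^m\ZZ_p$ for some $m$. Pick $\tau\in G_\QQ$ with $\rho_{f,p}(\tau)=g\cdot\mathrm{Id}$, where $g\in 1+p^m\ZZ_p$. Then $\tau|_\LL$ is central. Its action on $\mug_{p^\infty}$ is via $\vareps_p(\tau)=\det\rho_{f,p}(\tau)^{1/(k-1)}$, since the nebentypus is trivial and $\det\rho_{f,p}=\vareps_p^{k-1}$. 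This gives $\vareps_p(\tau)=g^{2/(k-1)}$ up to a root of unity, which can be absorbed by taking powers. Choosing $g$ so that $g^{2/(k-1)}$ is a positive integer $>1$ times an element acting trivially on the relevant roots of unity (for instance, taking $g$ with $g^2=(1+p^m)^{k-1}$) gives $\tau(\zeta)=\zeta^{1+p^m}$ with $1+p^m>1$ an integer.

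For (LPTR) and (NC) in case (open): here $\rho_{f,p}(I_p)$ is open in $\GL_2(\ZZ_p)$, hence open in $\GG$. This is (PTR), and it implies (LPTR) trivially. I would then invoke \cite[Proposition 3.2]{ContiTerracini2025}, which establishes (NC) for representations with open image in $\GL_2(\ZZ_p)$; the needed input is again the openness of $\GG$ given by Ribet's theorem.

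In case (ind) the hypotheses $p\ge5$ and $\SL_2(\FF_p)\subseteq\mathrm{Im}\,\ovl\rho_{f,p}$ are available. I would quote Proposition \ref{prop:indLPTR} for (LPTR) and Proposition \ref{prop:indNC} for (NC). I expect the main obstacle to be the (CE) step, specifically making the scalar element act on $\mug_{p^\infty}$ by an honest integer exponent $g>1$. The concern is the $(k-1)$-th root and the compatibility of the cyclotomic character with the chosen scalar. I would handle it by choosing the scalar first on the level of determinants, using that $\vareps_p(G_\QQ)=\ZZ_p^\times$, and then restricting to a deep enough congruence level, where the map $x\mapsto x^{(k-1)/2}$ is a bijection of $1+p^m\ZZ_p$ for $p$ odd.
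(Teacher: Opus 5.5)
Your proof is correct, and for (LPTR) and (NC) it coincides with the paper's. You use the same dichotomy (open)/(ind), obtain (PTR) from the openness of $\rho_{f,p}(I_p)$ in case (open), and invoke Propositions \ref{prop:indLPTR} and \ref{prop:indNC} in case (ind). One misattribution: in case (open), (NC) does not come from the global openness of $\GG$. It is a statement about the local groups $G_n$, and it holds trivially because $\rho_{f,p}(G_p)$ contains some $1+p^mM_2(\ZZ_p)$, so that $G_n=\GG_n$ for $n\ge m$.

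The genuine difference is (CE). The paper imports it from \cite{ContiTerracini2025}: big local image in case (open), and the last paragraph of the proof of \cite[Proposition 4.12]{ContiTerracini2025} in case (ind). You instead give one argument for both cases, via central scalars and $\det\rho_{f,p}=\vareps_p^{k-1}$. This works. Its cleanest form is to take $\rho_{f,p}(\tau)=(1+p^m)\mathrm{Id}$ and replace $\tau$ by $\tau^{k-1}$, which is still central and acts on $\mug_{p^\infty}$ by the integer $(1+p^m)^2>1$. That avoids $(k-1)$-th roots altogether. Your assertion that $x\mapsto x^{(k-1)/2}$ is a bijection of $1+p^m\ZZ_p$ is false when $p\mid k-1$, e.g.\ $p=5$, $k=6$ as in the paper's last example, though you never actually need it.

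Your route buys uniformity, but its global input is heavier than necessary. Under the hypotheses of the proposition, $\GG$ is open without Ribet--Momose or the non-CM assumption. In case (open) it contains $\rho_{f,p}(I_p)$. In case (ind), $p\ge 5$ and $\SL_2(\FF_p)\subseteq\mathrm{Im}\,\ovl\rho_{f,p}$ force the closure of $[\GG,\GG]$ to be $\SL_2(\ZZ_p)$, by Serre's classical lemma on closed subgroups of $\SL_2(\ZZ_p)$ surjecting onto $\SL_2(\FF_p)$.

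What your route does not give is an \emph{inertial} $\tau$. This does not matter for (CE) of Assumption \ref{ass:CT} as stated. However, the proof of Theorem \ref{thm:Bsupercusp} uses this proposition to supply (CE) of Assumption \ref{ass:composto}, which requires $\tau\in I_p$; this is what makes $\tau|_{\FF}$ of finite order in Theorem \ref{teo:composto}. Your argument adapts as follows:
\begin{itemize}
\item In case (open), choose $\tau\in I_p$, which is possible since $\rho_{f,p}(I_p)$ is open.
\item In case (ind), for $x\in\ZZ_p^\times$ the element $r_E(x)\in G_E^{\mathrm{ab}}$ is fixed by conjugation by $G_p\setminus G_E$. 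Hence $\psi$ and $\psi'$ agree on its lifts to $I_E\subseteq I_p$, so these lifts map to scalars and act on $\mug_{p^\infty}$ by $N_{E/\QQ_p}(x)^{-1}=x^{-2}$. Taking $x=(1+p)^{-1}$ gives the required $\tau$.
\end{itemize}
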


\begin{proof}
In case (open), $\rho_{f,p}$ has big local image (BLI) in the sense of \cite[Proposition 3.2]{ContiTerracini2025}. In particular, it has $\mathrm{(PTR,NC)}$ trivially, and (CE) by \emph{loc. cit.}. In case (ind), if $p\ge 5$ then $\rho_{f,p}$ has (LPTR) by Proposition \ref{prop:indLPTR}, and (CE) by the last paragraph of the proof of \cite[Proposition 4.12]{ContiTerracini2025}. Assuming that the image of $\ovl\rho_{f,p}$ contains $\SL_2(\FF_p)$, $\rho_{f,p}$ also has (NC) by Proposition \ref{prop:indNC}.
\end{proof}

\section{Supercuspidal forms: the adelic property \B}\label{sec:B}

We combine the results of the previous sections to prove the following.

\begin{thm}\label{thm:Bsupercusp}
Let $k\ge 2,N\ge 1$ be two integers, and $f\in S_k(\Gamma_0(N))$ an eigenform such that
% $\KK_f=\QQ$ and there exists an odd prime $p$ such that $\pi_{f,p}$ is supercuspidal. 
 % If $\rho_{f,p}\vert_{G_{p}}$ is induced, assume that the image of $\ovl\rho_{f,p}$ contains $\SL_2(\FF_p)$.
\begin{enumerate}
    \item $\KK_f=\QQ$;
    \item there exists an odd prime $p$ such that the admissible representation $\pi_{f,p}$ is supercuspidal, and if $\rho_{f,p}\vert_{G_{p}}$ is induced, then $p\ge 5$ and the image of $\ovl\rho_{f,p}$ contains $\SL_2(\FF_p)$.
\end{enumerate}
Then the adelic representation
\[\rho_f:G_\QQ \longrightarrow \GL_2(\widehat\ZZ)\]%\lt{o $\hat\ZZ$? (anche sopra)}
has property \SB.
\end{thm}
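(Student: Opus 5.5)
The plan is to decompose the field cut out by the adelic representation as a compositum and apply Corollary \ref{cor:composto2}. Since $\KK_f=\QQ$, the adelic representation factors as $\rho_f=\rho_{f,p}\times\rho_f^{(p)}$. Correspondingly, $\QQ(\rho_f)=\LL\FF$, where $\LL=\ovl\QQ^{\ker\rho_{f,p}}$ and $\FF=\ovl\QQ^{\ker\rho_f^{(p)}}$. We take $\KK=\QQ$, or more conveniently a suitable number field $\KK$ over which things become normal, to be fixed below. For the tower we take $\LL_n=\ovl\QQ^{\ker(\rho_{f,p}\bmod p^{n})}$, so that $\GG_n=\Gal(\LL/\LL_n)$ is the image of the principal congruence subgroups. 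This is a Lie filtration of the $p$-adic Lie group $\GG=\rho_{f,p}(G_\KK)\subseteq\GL_2(\ZZ_p)$, which has positive dimension since $f$ is non-CM. This gives (LIE).

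Next I check the remaining conditions of Assumptions \ref{ass:composto}. For $\FF$: Proposition \ref{prop:primetop} a) says $\rho_f^{(p)}(I_p)$ is finite, so $F/\QQ_p$ has finite ramification index, which is (DVF). Part b) gives finite projective image at $p$. This means $\FF^{Z}$, with $Z$ the preimage of the center (scalars) in $\Gal(\FF/\KK)$, has bounded local degree at $p$, which is property \ADZ. Proposition \ref{prop:ADZB} then yields \SB for $\FF/\KK$.

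For $\LL$, Proposition \ref{prop:padic} gives (LPTR, NC, CE) in the sense of Assumptions \ref{ass:CT}. Here some translation is needed:
\begin{itemize}
\item (LPTR) says the inertia group is open in the decomposition group. Replacing $\LL_0$ by some $\LL_{n_1}$ with $\Gal(L/L_{n_1})$ inside inertia therefore gives (LTR).
\item (NC) in the sense of \ref{ass:CT} holds from some $n_0$ on; after reindexing, this is (NC) of \ref{ass:composto}.
\item For (CE) we need a central element $\tau$ lying in inertia and acting on all of $\mug_{p^\infty}$ by a power $g>1$. The element supplied by \cite{ContiTerracini2025} is a scalar matrix in the image of $\rho_{f,p}(I_p)$, for instance coming from a scalar in $\rho_{f,p}(I_p)\cap(1+p\ZZ_p)$. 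Since scalars are central and the determinant is $\vareps_p^{k-1}$, it acts on $p$-power roots of unity through $\vareps_p$. Passing to a power and using that $\vareps_p(\tau)\in\ZZ_p^\times$ lies near $1$, we arrange $\tau(\zeta)=\zeta^g$ with $g\in\ZZ$, $g>1$, on each finite level, uniformly as in \emph{loc. cit.}
\end{itemize}

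Once these are in place, Corollary \ref{cor:composto2}(ii) gives \SB for $\LL\FF=\QQ(\rho_f)$ over $\KK$. Since $\KK$ is a number field, finite extensions of $\QQ(\rho_f)$ are handled the same way (cf. the proof of that corollary), and property \SB is intrinsic to the field. Working over a finite extension $\KK$ is also harmless: I would choose $\KK$ such that inertia at $p$ in $\Gal(\LL/\KK)$ sits correctly and such that Galois-ness is preserved.

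The main obstacle is (CE) in the \emph{inertial} form required by Assumptions \ref{ass:composto}, especially in case (ind). There we have only LPTR, the global image is open in $\GL_2(\ZZ_p)$, and the local image is a torus, so we must check that a nontrivial scalar lies in $\rho_{f,p}(I_p)$. I would try the induced description $\psi\oplus\psi'$ on $G_E$ from Section \ref{sec:padic}: on inertia of $F$ the character is $\psi=N_{F/E}^{k-1}$, and then $\psi\psi'$ restricted to inertia is a power of the norm to $\QQ_p$. Hence the intersection of $\rho_{f,p}(I_F)$ with the scalars contains elements $x$ with $x$ running over an open subgroup of $\ZZ_p^\times$ raised to the power $(k-1)$. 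This produces the desired $\tau$ with $g=x^{2}$-type exponents approximated by integers $>1$.
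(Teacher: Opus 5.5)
Your proposal is correct and follows the paper's proof. The paper also writes $\QQ(\rho_f)=\LL\FF$ with $\KK=\QQ$, shifts the $p^n$-congruence tower so that $L/L_0$ is totally ramified, and gets (DVF) and \SB from Proposition~\ref{prop:primetop} with Proposition~\ref{prop:ADZB}, (NC, CE) from Proposition~\ref{prop:padic}, and concludes via Theorem~\ref{teo:composto} and Corollary~\ref{cor:composto2}.

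Your extra check of the \emph{inertial} form of (CE), which the paper simply takes from Proposition~\ref{prop:padic}, is a useful addition, with two corrections. In case (ind), the scalars come from $\rho_{f,p}(r_F(u))$ with $u\in\ZZ_p^\times$: there $\psi$ and $\psi'$ agree because $N_{F/E}(u)\in\ZZ_p^\times$ is fixed by $\Gal(E/\QQ_p)$, which is the real reason, not the shape of $\psi\psi'$. Also, in both cases one can make $\vareps_p(\tau)$ exactly an integer $g>1$, for instance a power of $1+p$, so no approximation is needed.
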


\begin{proof}
We apply Theorem \ref{teo:composto} to the following data. Let $\KK=\QQ$, $\FF$ the subfield of $\ovl\Q$ fixed by $\ker\rho_f^{(p)}$, and $\LL$ the subfield of $\ovl\QQ$ fixed by $\ker\rho_{f,p}$; both $\FF$ and $\LL$ are Galois over $\QQ$. It remains to choose a tower of intermediate extensions of $\LL/\QQ$: for $n\ge 1$, let $\pi_n:\GL_2(\ZZ_p)\to\GL_2(\ZZ_p/p^n)$ and $\wtl\LL_n$ be the subfield of $\ovl\QQ$ fixed by $\ker(\pi_n\circ\rho_{f,p})$; it is clearly a number field, Galois over $\QQ$. Since $\LL/\QQ$ is LPTR (it is even PTR in case \rm{(open)} of Section \ref{sec:padic}, and it is LPTR in case \rm{(ind)} by Proposition \ref{prop:indLPTR}), there exists $n_0$ such that $\LL/\wtl\LL_0$ is LTR. We set $\LL_n=\wtl\LL_{n_0}$ if $n\le n_0$ and $\LL_n=\wtl\LL_{n}$ if $n\ge n_0$. 

We check that the assumptions of Theorem \ref{teo:composto} are satisfied:
\begin{description}
% \item[\rm{(GAL)}] \sout{All of the $\LL_n$, and $\LL$, are Galois over $\QQ$ by construction.}\lt{non abbiamo un'ipotesi (GAL)}
\item[\rm{(LIE)}] $\GG=\Gal(\LL/\QQ)$ is a $p$-adic Lie group since it is embedded as a closed subgroup of $\GL_2(\ZZ_p)$ via $\rho_{f,p}$, and it is of positive dimension since $\rho_{f,p}$ has infinite image given that $k\ge 2$. Clearly $\GG_n=\Gal(\LL/\LL_n)$ is a Lie filtration of $\GG$.
\item[\rm{(DVF)}] $F$ is a discrete valuation field by Proposition \ref{prop:primetop} a).
\item[\rm{(SB)}] $\FF/\KK$ has property \ADZ at $p$ by Proposition \ref{prop:primetop} b), hence the strong Bogomolov property \SB by Proposition \ref{prop:ADZB}.
\item[\rm{(LTR)}] $L/L_0$ is totally ramified by our choice of $\LL_0$.
\item[$\mathrm{(NC,CE)}$] These properties were proved in Proposition \ref{prop:padic}.\qedhere
% In case \rm{(open)} of Section \ref{sec:padic}, the extension $\LL/\QQ$ has property \rm{(NC)} by \cite[??]{ContiTerracini2025}. In case \rm{(ind)}, it has \rm{(NC)} by \andr{to be written}.
% \item[\rm{(CE)}] In case \rm{(open)} of Section \ref{sec:padic}, the extension $\LL/\QQ$ has property \rm{(CE)} by by \cite[??]{ContiTerracini2025}. In case \rm{(ind)}, it has \rm{(CE)} by the last paragraph of the proof \cite[Proposition 4.12]{ContiTerracini2025}. 
\end{description}
\end{proof}

Note that for $k=2$ property \B was already known by \cite{Habegger2013}.

\section{Examples}\label{sec:examples}
We rely on work of Mascot \cite{Mascot2022} to construct some examples of forms $f$ satisfying the assumptions of Theorem \ref{thm:Bsupercusp}.
For detecting congruences between forms of possible different weight, we referred to \cite{Kohnen2004} and \cite[Corollary 9.19]{SteinMod}.

\begin{comment}
\andr{In the following examples, we rely on the Sturm bound 
\[ B(M_k(\Gamma_0(N),\ZZ))=\frac{k}{12}[\SL_2(\ZZ):\Gamma_0(N)]=\frac{kN}{12}\prod_{q\text{ prime}\mid Np}\left(1+\frac{1}{q}\right) \] 
given in \cite[Corollary 9.19]{SteinMod}.}
\pietro{Nel caso si trovassero congruenze fra forme di \sout{livelli} pesi diversi non si può più usare Sturm ma magari qualcosa qui \url{https://www.impan.pl/en/publishing-house/journals-and-series/acta-arithmetica/113/en/publishing-house/journals-and-series/acta-arithmetica/113/1/83113/on-fourier-coefficients-of-modular-forms-of-different-weights}}
\andr{Livelli o pesi diversi? Per livelli diversi pensavo bastasse vedere quella di livello minore nello spazio di livello maggiore}
\end{comment}

%{\lt{Però mi sa che Mascot richieda che il primo non-old divida il livello} Consider the newform 25.4.a.a of level $N=25$ and weight $k=4$. It has trivial nebentypus, the coefficient field is $\Q$ and the local representation at the prime $p=5$ is supercuspidal. The representation at $p$ is irreducible since the polynomial $X^2-a_r(f)X+r^{k-1}\in \mathbb{F}_5[X]$ is irreducible for $r=3$. Since the nebentypus is trivial and $f$ cannot be old at a prime different from $p$, \cite[Theorem 5]{Mascot2022} shows that the image mod $p$ is large. By Theorem \ref{thm:Bsupercusp} the adelic representation associated to $f$ has (B).}
%\lt{ricordiamoci anche che Mascot per old a $q$ intende "congrua mod p a una forma vecchia", quindi quello che dicevo ieri su prendere una forma quaternionica q-nuova non è sufficiente. } 

    Let $p$ be an odd prime, $N$ a squarefree integer not divisible by $p$,  and consider a newform $f\in S_k(\Gamma_0(Np^r))$ with $r\geq 2$, twist minimal (in the sense that $Np^r$ is minimal among the levels of the newforms attached to the twists of $f$ by Dirichlet characters) and with Hecke field $\QQ$. Then $\pi_{f,p}$ is supercuspidal at $p$ \cite[Proposition 2.8]{LoefflerWeinstein2012}; by Proposition \ref{prop:r=2}, $r=2$ if $p\geq 5$. Assume that $\bar\rho_{f,p}$ is irreducible;  then it is absolutely irreducible, as explained in \cite[\S 3.3]{Serre1987}. Let $N'$ be the prime-to-$p$ part of the Artin conductor of $\bar\rho_{f,p}$; since $N$ is square-free, $N'$ is the product of all primes $q\not=p$ such that $\bar\rho_{f,p}$ is ramified at $q$. By  \cite[Theorem 5]{Mascot2022}, if $N'\not=1$ then the image of $\bar\rho_{f,p}$ contains $\SL_2(\FF_p)$ and  Theorem \ref{thm:Bsupercusp} is applicable.
    
    By Serre's conjecture, there exists a newform $g$ of level $N'$ such that $\bar\rho_{g,v}\simeq \bar\rho_{f,p}$ for a place $v$ over $p$ in the Hecke field of $g$. The weight of $g$ is an integer $k'\equiv k\mod p-1$  such that $2\leq k'\leq p^2-1$. It follows that if there are no $g\in S_{k'}(\SL_2(\ZZ))$ with $k'\equiv k\mod p-1$, $2\leq k'\leq p^2-1$ such that $\bar\rho_{g,v}\simeq \bar\rho_{f,p}$ for a place $v$ over $p$, then the image of $\bar\rho_{f,p}$ contains $\SL_2(\FF_p)$.

\begin{ex}
Let $p=5$ and consider the form $f\in S_4(\Gamma_0(575))$ named 575.4.a.c in \cite{LMFDB}. We have $575=25\cdot 23$; moreover $a_{17}\equiv 0 \mod 5$ and $X^2-17^3$ does not have roots in $\FF_5$. This shows that $\bar\rho_{f,p}$ is irreducible. A direct search shows that there are no congruences of $f$ with forms in $S_h(\SL_2(\ZZ))$, for $h\in\{4,8,12,16,20,24\}$. Then the image of $\bar\rho_{f,p}$ contains $\SL_2(\FF_p)$.
\end{ex}

\begin{remarks}\label{rem:8.1}\phantom{a}
\begin{itemize}
    \item[a)] Let $\rho_{f,p}|_{G_p}$ be induced by a character $\psi$ of $\QQ_{p^2}^\times$ and assume that $\psi$ becomes crystalline on a finite totally ramified extension $F/\QQ_{p^2} $ of degree $m\mid p^2-1$; then, as we showed in Section \ref{sect:induced},  the restriction of  $\psi\circ r_{F}$ to $\calo_{F}^\times$ coincides with $N_{F/\QQ_{p^2}}^{k-1}$. Then its image modulo $p$ is $(\FF_{p^2}^{\times})^{m(k-1)}$, which is not contained in $\FF_p^\times$ if $p+1\nmid m(k-1)$. In this case, $\bar\rho_{f,p}|_{I_p}$ is the sum of two characters of level 2, so that $\bar\rho_{f,p}|_{G_p}$ is irreducible by \cite[\S 2.2]{Serre1987}.
    \item[b)] The big image check is only required when $\rho_{f,p}|_{G_p}$
 is induced. Unfortunately, this condition cannot be read off directly from the usual classical data attached to the modular form.
\end{itemize}
\end{remarks}

Another direction in the search for applications of Theorem \ref{thm:Bsupercusp} is to look for modular forms whose local representation at $p$ is not induced. We outline below one possible strategy.
Let the supercuspidal representation $\pi_p$ be associated to the admissible pair $(\QQ_{p^2},\theta)$, with $\theta$ a character of conductor 1 such that $|\theta(\ZZ_p^\times)|(k-1)$ is not divisible by $p+1$. Assume that $\bar\rho_f\simeq \bar\rho_g$ for $g\in S_k(\Gamma_1(M))$ such that $a_p(g)\not\equiv 0\mod p$. Then $g$ is ordinary at $p$, so that $\bar\rho_f|_{G_p}$ is reducible. It follows from Remark \ref{rem:8.1} $a)$ that $\rho_f|_{G_p}$ cannot be induced.

\begin{ex}
    %\pietro{Un esempio con $p\neq3$ e $k\neq 4$} \lt{$g$ è $3$-nuova? riusciamo a trovare un esempio con $3^k$ o altro non squarefree al posto di 3?} 
    Consider the 5-supercuspidal newform $f\in S_6(\Gamma_0(3\cdot 5^2))$ with label 75.6.a.d in \cite{LMFDB}. One checks that, at the prime $7$, the characteristic polynomial of Frobenius is irreducible modulo 5, so that  $\overline{\rho}_{f,5}$ is irreducible. Moreover the residual  representation mod 5 is isomorphic to the one coming from the form $g\in S_6(\Gamma_0(3))$ with label 3.6.a.a.,  which $g$ is ordinary at $5$; therefore, the local residual representation $\bar\rho_{f,5}|_{G_{\QQ_5}}$ is reducible. 
    A Sage computation shows that the character $\theta$ of $\QQ_{25}^\times$ associated to $\pi_{f,5}$ induces a character of order 2 on $\FF_{25}^\times$. By Remark \ref{rem:8.1} a), if $\rho_{f,5}|_{G_{\QQ_5}}$ were induced, then its residual $\bar\rho_{f,p}|_{G_p}$ representation would be irreducible, leading to a contradiction. Thus $\rho_{f,p}|_{G_p}$
    cannot be induced and
    we can apply Theorem \ref{thm:Bsupercusp}.
\end{ex}

\bibliographystyle{abbrv}
\bibliography{BOG}

\begin{thebibliography}{10}

\bibitem{Amo16}
F.~Amoroso.
\newblock On a conjecture of {G}. {R}\'emond.
\newblock {\em Ann. Sc. Norm. Super. Pisa Cl. Sci. (5)}, 15:599--608, 2016.

\bibitem{AmorosoDavidZannier2014}
F.~Amoroso, S.~David, and U.~Zannier.
\newblock On fields with {P}roperty ({B}).
\newblock {\em Proc. Amer. Math. Soc.}, 142(6):1893--1910, 2014.

\bibitem{AmorosoDvornicich2000}
F.~Amoroso and R.~Dvornicich.
\newblock A lower bound for the height in abelian extensions.
\newblock {\em J. Number Theory}, 80(2):260--272, 2000.

\bibitem{AmorosoTerracini2024}
F.~Amoroso and L.~Terracini.
\newblock Bogomolov property and {G}alois representations.
\newblock {\em J. Number Theory}, 279:294--322, 2026.

\bibitem{Blasius2006}
D.~Blasius.
\newblock Hilbert modular forms and the {R}amanujan conjecture.
\newblock In {\em Noncommutative geometry and number theory}, volume E37 of {\em Aspects Math.}, pages 35--56. Friedr. Vieweg, Wiesbaden, 2006.

\bibitem{BombieriZannier2001}
E.~Bombieri and U.~Zannier.
\newblock A note on heights in certain infinite extensions of {$\mathbb Q$}.
\newblock {\em Atti Accad. Naz. Lincei Cl. Sci. Fis. Mat. Natur. Rend. Lincei (9) Mat. Appl.}, 12:5--14 (2002), 2001.

\bibitem{CalegariSardari}
F.~Calegari and N.~Talebizadeh~Sardari.
\newblock Vanishing {F}ourier coefficients of {H}ecke eigenforms.
\newblock {\em Math. Ann.}, 381(3-4):1197--1215, 2021.

\bibitem{Carayol1986}
H.~Carayol.
\newblock Sur les repr{é}sentations {$\ell$}-adiques associ{é}es aux formes modulaires de {Hilbert}.
\newblock {\em Ann. Sci. {\'E}c. Norm. Sup{\'e}r. (4)}, 19(3):409--468, 1986.

\bibitem{ConradLifting2012}
B.~Conrad.
\newblock Lifting global representations with local properties.
\newblock http://math.stanford.edu/~conrad/papers/locchar.pdf, 2012.

\bibitem{ContiDelCorsoPlessisTerracini2026}
A.~Conti, I.~D. Corso, A.~Plessis, and L.~Terracini.
\newblock Small points in radical extensions of number fields.
\newblock https://arxiv.org/abs/2607.29208, 2026.

\bibitem{ContiTerracini2025}
A.~Conti and L.~Terracini.
\newblock Bogomolov property for {G}alois representations with big local image.
\newblock https://arxiv.org/abs/2403.03319, 2025.

\bibitem{FontaineMazur1995}
J.-M. Fontaine and B.~Mazur.
\newblock Geometric {G}alois representations.
\newblock In {\em Elliptic curves, modular forms, \& {F}ermat's last theorem ({H}ong {K}ong, 1993)}, volume~I of {\em Ser. Number Theory}, pages 41--78. Int. Press, Cambridge, MA, 1995.

\bibitem{Frey2021}
L.~Frey.
\newblock Explicit small heights in infinite non-abelian extensions.
\newblock {\em Acta Arith.}, 199(2):111--133, 2021.

\bibitem{Gerardin1978}
P.~G\'erardin.
\newblock Facteurs locaux des alg\`ebres simples de rang {$4$}. {I}.
\newblock In {\em Reductive groups and automorphic forms, {I} ({P}aris, 1976/1977)}, volume~1 of {\em Publ. Math. Univ. Paris VII}, pages 37--77. Univ. Paris VII, Paris, 1978.

\bibitem{GhateMezard2009}
E.~Ghate and A.~M\'ezard.
\newblock Filtered modules with coefficients.
\newblock {\em Trans. Amer. Math. Soc.}, 361(5):2243--2261, 2009.

\bibitem{GhateVatsal2004}
E.~Ghate and V.~Vatsal.
\newblock On the local behaviour of ordinary {$\Lambda$}-adic representations.
\newblock {\em Ann. Inst. Fourier (Grenoble)}, 54(7):2143--2162, 2004.

\bibitem{Gri17}
R.~Grizzard.
\newblock Remarks on {R{\'e}mond}'s generalized {Lehmer} problems.
\newblock Preprint, {arXiv}:1710.11614 [math.{NT}] (2017), 2017.

\bibitem{Habegger2013}
P.~Habegger.
\newblock Small height and infinite nonabelian extensions.
\newblock {\em Duke Math. J.}, 162(11):2027--2076, 2013.

\bibitem{Kohnen2004}
W.~Kohnen.
\newblock On {F}ourier coefficients of modular forms of different weights.
\newblock {\em Acta Arith.}, 113(1):57--67, 2004.

\bibitem{LoefflerWeinstein2012}
D.~Loeffler and J.~Weinstein.
\newblock On the computation of local components of a newform.
\newblock {\em Math. Comput.}, 81(278):1179--1200, 2012.

\bibitem{MandalMondal2026}
T.~Mandal and S.~Mondal.
\newblock On the change of epsilon factors for symmetric square transfers under twisting and applications.
\newblock {\em Monatsh. Math.}, 209(1):105--132, 2026.

\bibitem{Mascot2022}
N.~Mascot.
\newblock A method to prove that a modular {G}alois representation has large image.
\newblock https://arxiv.org/abs/2205.14030, 2022.

\bibitem{Piras2026}
P.~Piras.
\newblock Bogomolov property for modular {G}alois representations with nontrivial nebentypus.
\newblock https://arxiv.org/abs/2603.13523, 2026.

\bibitem{Ple22}
A.~Plessis.
\newblock Points de petite hauteur sur une vari\'et\'e{} semi-ab\'elienne de la forme {$\mathbb {G}^n_m\times A$}.
\newblock {\em Bull. Lond. Math. Soc.}, 54(6):2278--2296, 2022.

\bibitem{Pl24}
A.~Plessis.
\newblock Bogomolov property of some infinite nonabelian extensions of a totally {$v$}-adic field.
\newblock {\em Acta Arith.}, 213(1):1--23, 2024.

\bibitem{Ple24}
A.~Plessis.
\newblock Location of small points on an elliptic curve by an equidistribution argument.
\newblock {\em Int. Math. Res. Not. IMRN}, (6):4689--4709, 2024.

\bibitem{Ple24a}
A.~Plessis.
\newblock A new way to tackle a conjecture of {R{\'e}mond}.
\newblock {\em Can. J. Math.}, 76(6):2049--2072, 2024.

\bibitem{Pot21}
L.~Pottmeyer.
\newblock Fields generated by finite rank subgroups of {$\overline{\mathbb {Q}}^*$}.
\newblock {\em Int. J. Number Theory}, 17(5):1079--1089, 2021.

\bibitem{Rem17}
G.~R\'emond.
\newblock G\'en\'eralisations du probl\`eme de {L}ehmer et applications \`a{} la conjecture de {Z}ilber-{P}ink.
\newblock In {\em Around the {Z}ilber-{P}ink conjecture/{A}utour de la conjecture de {Z}ilber-{P}ink}, volume~52 of {\em Panor. Synth\`eses}, pages 243--284. Soc. Math. France, Paris, 2017.

\bibitem{Saito1997}
T.~Saito.
\newblock Modular forms and {$p$}-adic {H}odge theory.
\newblock {\em Invent. Math.}, 129(3):607--620, 1997.

\bibitem{Sen1972}
S.~Sen.
\newblock Ramification in {$p$}-adic {L}ie extensions.
\newblock {\em Invent. Math.}, 17:44--50, 1972.

\bibitem{Sen1973}
S.~Sen.
\newblock Lie algebras of {G}alois groups arising from {H}odge-{T}ate modules.
\newblock {\em Ann. of Math. (2)}, 97:160--170, 1973.

\bibitem{Serre1968}
J.-P. Serre.
\newblock {\em Corps locaux}, volume No. VIII of {\em Publications de l'Universit\'{e} de Nancago}.
\newblock Hermann, Paris, 1968.
\newblock Deuxi\`eme \'{e}dition.

\bibitem{SerreDiv1976}
J.-P. Serre.
\newblock Divisibilit\'{e} de certaines fonctions arithm\'{e}tiques.
\newblock {\em Enseign. Math. (2)}, 22(3-4):227--260, 1976.

\bibitem{Serre1987}
J.-P. Serre.
\newblock Sur les repr\'{e}sentations modulaires de degr\'{e} {$2$} de {$\mathrm{Gal}(\overline{\mathbf{Q}}/\mathbf{Q})$}.
\newblock {\em Duke Math. J.}, 54(1):179--230, 1987.

\bibitem{SerreLie2006}
J.-P. Serre.
\newblock {\em Lie algebras and {L}ie groups}, volume 1500 of {\em Lecture Notes in Mathematics}.
\newblock Springer-Verlag, Berlin, 2006.
\newblock 1964 lectures given at Harvard University, Corrected fifth printing of the second (1992) edition.

\bibitem{S10}
J.~H. Silverman.
\newblock Lang's height conjecture and {S}zpiro's conjecture.
\newblock {\em New York J. Math.}, 16:1--12, 2010.

\bibitem{S08}
C.~Smyth.
\newblock The {M}ahler measure of algebraic numbers: a survey.
\newblock In {\em Number theory and polynomials}, volume 352 of {\em London Math. Soc. Lecture Note Ser.}, pages 322--349. Cambridge Univ. Press, Cambridge, 2008.

\bibitem{SteinMod}
W.~Stein.
\newblock {\em Modular forms, a computational approach}, volume~79 of {\em Graduate Studies in Mathematics}.
\newblock American Mathematical Society, Providence, RI, 2007.
\newblock With an appendix by Paul E. Gunnells.

\bibitem{Tate1979}
J.~Tate.
\newblock Number theoretic background.
\newblock In A.~Borel and W.~Casselman, editors, {\em Automorphic Forms, Representations and $L$-Functions}, volume 33-2 of {\em Proceedings of Symposia in Pure Mathematics}, pages 3--26. American Mathematical Society, Providence, RI, 1979.
\newblock Proc. Sympos. Pure Math., Corvallis, 1977.

\bibitem{LMFDB}
{The LMFDB Collaboration}.
\newblock The {L}-functions and modular forms database.
\newblock \url{https://www.lmfdb.org}, 2024.

\end{thebibliography}

\smallskip

\begin{footnotesize}
\noindent\textsc{Andrea Conti} -- IWR, Heidelberg University, Im Neuenheimer Feld 205, 69120 Heidelberg, Germany, \url{andrea.conti@iwr.uni-heidelberg.de} \\
\noindent\textsc{Pietro Piras} -- Dipartimento di Matematica, Università di Torino, Via Carlo Alberto, 10 - 10123 Torino, Italy, \url{pietro.piras@unito.it} \\
\noindent\textsc{Lea Terracini} -- Dipartimento di Informatica, Università di Torino, Corso Svizzera 185, 10149 Torino, Italy, \url{lea.terracini@unito.it}
\end{footnotesize}

\end{document}